\RequirePackage{ifthen}
\newboolean{snjnl}
\setboolean{snjnl}{false}

\ifthenelse{\boolean{snjnl}}{%
\documentclass[pdflatex,sn-mathphys-num]{sn-jnl}
}{%
\documentclass[11pt]{article}
}

\usepackage{graphicx}
\usepackage{comment}
\usepackage{multirow}
\usepackage{amsmath,amssymb,amsfonts}
\usepackage{mathrsfs}
\usepackage{xcolor}
\usepackage{textcomp}
\usepackage{booktabs}
\usepackage{listings}
\usepackage{tikz}
\usepackage{pgfplots}
\pgfplotsset{compat=1.18}
\usetikzlibrary{arrows.meta, backgrounds, intersections}
\definecolor{oxfordblue}{RGB}{0,33,71}
\usepackage{accents}
\usepackage{array}
\usepackage{longtable}
\usepackage{pdflscape}
\usepackage{enumitem}
\usepackage{xspace}
\usepackage{bm}
\usepackage{subcaption}

\ifthenelse{\boolean{snjnl}}{%
\usepackage[title]{appendix}
\usepackage{manyfoot}
\DeclareNewFootnote{A}[gobble]
\setlength{\skip\footinsA}{0pt}
\usepackage{algorithm}
\usepackage{algorithmicx}
\usepackage{algpseudocode}
}{%
\usepackage{amsthm}
\usepackage{algorithm}
\usepackage{algorithmicx}
\usepackage{algpseudocode}
\usepackage{hyperref}

}
\usepackage[capitalize,noabbrev]{cleveref}
\algrenewcommand\algorithmicrequire{\textbf{Input:}}
\algrenewcommand\algorithmicensure{\textbf{Output:}}

\ifthenelse{\boolean{snjnl}}{%
\theoremstyle{thmstyleone}
\newtheorem{theorem}{Theorem}
\newtheorem{proposition}[theorem]{Proposition}
\newtheorem{lemma}[theorem]{Lemma}
\newtheorem{corollary}[theorem]{Corollary}
\newtheorem{claim}[theorem]{Claim}

\theoremstyle{thmstyletwo}
\newtheorem{example}{Example}
\newtheorem{remark}{Remark}

\theoremstyle{thmstylethree}
\newtheorem{definition}{Definition}
\newtheorem{assumption}{Assumption}
}{%
\newtheorem{theorem}{Theorem}
\newtheorem{proposition}{Proposition}
\newtheorem{lemma}{Lemma}
\newtheorem{corollary}{Corollary}
\newtheorem{remark}{Remark}

\newtheorem{example}{Example}

}

\ifthenelse{\boolean{snjnl}}{%
\newenvironment{prf}[1][]{\begin{proof}}{\end{proof}}
\newenvironment{prfc}[1][]{\begin{proof}[Proof #1]}{\end{proof}}
\newenvironment{cpf}%
  {\begin{trivlist}\item[]{\em Proof of claim. }}%
  {$\hfill\diamond$\end{trivlist}}
\newenvironment{spf}%
  {\begin{trivlist}\item[]{\em Proof of step. }}%
  {$\hfill\triangleleft$\end{trivlist}}
}{%

  {\begin{trivlist}\item[]{\em Proof of claim. }}%
  {$\hfill\diamond$\end{trivlist}}
  {\begin{trivlist}\item[]{\em Proof of step. }}%
  {$\hfill\triangleleft$\end{trivlist}}
}

\ifthenelse{\boolean{snjnl}}{}{%
\newcommand{\fnm}[1]{#1}
\newcommand{\sur}[1]{#1}

\newcommand{\orgname}[1]{#1}
\newcommand{\orgaddress}[1]{#1}

\newcommand{\city}[1]{#1}

\newcommand{\country}[1]{#1}

\newcommand{\keywords}[1]{\par\smallskip\noindent\textbf{Keywords:} #1}
\providecommand{\subjclass}[2][]{%
  \par\smallskip\noindent\textbf{MSC (#1):} #2%
}
\Crefname{chapter}{Chap.}{Chaps.}
\Crefname{section}{Sect.}{Sects.}
\Crefname{proposition}{Prop.}{Props.}
\Crefname{theorem}{Thm.}{Thms.}
\Crefname{definition}{Defn.}{Defns.}
\Crefname{corollary}{Cor.}{Cors.}
\Crefname{assumption}{Assum.}{Assums.}
\Crefname{lemma}{Lem.}{Lems.}
}

\providecommand{\subjclass}[2][]{%
  \par\addvspace{10pt}{\keywordfont{\bfseries MSC (#1):} #2\par}%
}

\newcommand{\bR}{\mathbb{R}}
\newcommand{\bZ}{\mathbb{Z}}
\newcommand{\bB}{\mathbb{B}}
\newcommand{\bS}{\mathbb{S}}
\newcommand{\cA}{{\mathcal{A}}}

\newcommand{\cE}{{\mathcal{E}}}

\newcommand{\cN}{{\mathcal{N}}}

\newcommand{\cX}{{\mathcal{X}}}

\newcommand{\norm}[1]{{\lVert#1\rVert}}

\newcommand{\relx}[1]{\tilde{#1}}
\newcommand{\MI}{\mathrm{MI}}
\newcommand{\Par}{\mathrm{P}}
\newcommand{\JR}{\mathrm{JR}}
\newcommand{\MP}{\mathrm{MP}}
\newcommand{\BI}{\mathbb{I}}
\newcommand{\idxset}[1]{\{1,\ldots,#1\}}

\DeclareMathOperator{\conv}{conv}

\DeclareMathOperator{\dom}{dom}

\DeclareMathOperator{\rang}{range}
\DeclareMathOperator{\inter}{int}

\DeclareMathOperator{\clos}{cl}

\DeclareMathOperator{\bd}{bd}

\newcolumntype{L}{>{$}l<{$}}
\newcolumntype{C}{>{$}c<{$}}
\newcolumntype{R}{>{$}r<{$}}

\allowdisplaybreaks

\begin{document}

\newcommand{\PaperShortTitle}{Joint-Range Inequalities for QCQPs}
\newcommand{\PaperTitle}{Joint-Range Inequalities for Nonconvex QCQPs}
\newcommand{\PublicationStatusDisclosure}{No part of this manuscript has been published previously or submitted for publication elsewhere.}
\ifthenelse{\boolean{snjnl}}{%
  \title[\PaperShortTitle]{\PaperTitle}
  \artnote{\PublicationStatusDisclosure}
}{%
  \title{\PaperTitle\thanks{\PublicationStatusDisclosure}}
}

\ifthenelse{\boolean{snjnl}}{%
  \author*[1]{\fnm{Liding} \sur{Xu}}\email{lidingxu.ac@gmail.com}
  \author[1,2]{\fnm{Sebastian} \sur{Pokutta}}\email{pokutta@zib.de}

  \affil*[1]{%
    \orgname{Zuse Institute Berlin},
    \orgaddress{\city{Berlin}, \country{Germany}}%
  }
  \affil[2]{%
    \orgname{Technische Universit\"at Berlin},
    \orgaddress{\city{Berlin}, \country{Germany}}%
  }
}{%
  \author{%
    \fnm{Liding} \sur{Xu}%
    \thanks{\orgname{Zuse Institute Berlin},
      \orgaddress{\city{Berlin}, \country{Germany}}.
      E-mail: \texttt{lidingxu.ac@gmail.com}}%
    \and
    \fnm{Sebastian} \sur{Pokutta}%
    \thanks{\orgname{Zuse Institute Berlin} and
      \orgname{Technische Universit\"at Berlin},
      \orgaddress{\city{Berlin}, \country{Germany}}.
      E-mail: \texttt{pokutta@zib.de}}%
  }
  \date{\today}
}

\newcommand{\PaperAbstract}{%
  We study cutting planes for nonconvex quadratically constrained quadratic programs (QCQPs) through a project-then-lift approach inspired by mixed-integer rounding (MIR) inequalities. Given two base valid inequalities for the extended QCQP formulation, we project the associated two-row relaxation into a two-dimensional set and analyze the joint range of quadratic functions in two base inequalities.  For the nonconvex joint range, we give a closed-form convex hull description of the projected set; for the convex joint range, we give its semidefinite representation. This yields a new family of \emph{joint-range inequalities}, which can be lifted back to the extended QCQP formulation. MIR inequalities can handle ``mixed'' terms: continuous variables or fractional linear combinations of integer variables. Similarly, we propose more flexible  \emph{secant mixed-joint-range inequalities}, which better expose and exploit the nonconvex joint range. The proposed approach preserves sparsity, since the support of each lifted inequality is controlled by that of the base inequalities. In preliminary geometric experiments,  the joint-range inequalities yield substantial area reduction of the projected relaxation constructed via reformulation-linearization-technique.
}

\newcommand{\PaperKeywords}{Quadratically constrained quadratic programming, Cutting planes, Projections, S-lemma, Joint range of quadratic functions}
\newcommand{\PaperMSC}{Primary 90C20; Secondary 90C26, 90C57}

\maketitle

\ifthenelse{\boolean{snjnl}}{%
  {\abstractfont
      \abstracthead*{\abstractname}
      \PaperAbstract\par
    }
  \par\addvspace{10pt}{\keywordfont{\bfseries Keywords:} \PaperKeywords\par}
  \subjclass[2020]{\PaperMSC}
}{%
  \begin{abstract}
    \PaperAbstract
  \end{abstract}
  \keywords{\PaperKeywords}
  \subjclass[2020]{\PaperMSC}
}


\section{Introduction}\label{sec:intro2}

Quadratically constrained quadratic programs (QCQPs) form an important class of optimization problems at the core of mixed-integer nonlinear programming (MINLP). Their standard form is
\begin{equation}
  \label{eq:qcqp}
  \begin{aligned}
    \min_{x\in\bR^n} \quad & x^\top Q_0 x + q_0^\top x + r_0                       \\
    \text{s.t.}     \quad  & x^\top Q_k x + q_k^\top x \le r_k, \quad \forall k \in \idxset{m}.
  \end{aligned}
\end{equation}
Cutting planes form the computational backbone of modern MINLP solvers \cite{hojny2025scip,bao2009multitermpolyhedral}, but their design is constrained by a persistent trade-off between the tightness of a convexification and the difficulty of optimizing over it~\cite{lee2007mixed}.

The standard extended formulation introduces a variable $X_{ij}$ to represent the product $x_i x_j$ appearing in the objective or in any constraint. Assume that $\cE \subseteq \{\{i,j\}: 1 \le i \le j \le n\}$ indexes these products. Let $\bS^n$ denote the space of $n \times n$ real symmetric matrices and let $\bS^n_{\cE}\doteq \{X \in \bS^n: \forall \{i,j\} \notin \cE,\; X_{ij} = 0\}$ denote the sparse subspace of matrices with support contained in $\cE$.
The extended formulation reads
\begin{equation}
  \label{eq:lifted-formulation-intro}
  \begin{aligned}
    \min_{x \in \bR^n, X \in \bS^{n}_{\cE}} \quad & \langle Q_0, X \rangle + q_0^\top x + r_0        &                                  \\
    \text{s.t.}                    \quad      & \langle Q_k, X \rangle + q_k^\top x \le r_k, \quad & \forall \, k \in \idxset{m}, \\
& X_{ij} = x_i x_j, \quad &\forall\, \{i,j\} \in \cE.
  \end{aligned}
\end{equation}
Since all $Q_k \in \bS^n_{\cE}$, the bilinear constraint  $X_{ij} = x_i x_j$  enforces the sparse rank-one lifting constraint $X_{\cE}=(xx^\top)_{\cE}$. Without loss of generality, we can set $X \in \bS^{n}_{\cE}$ as well.
For this nonconvex formulation, the bilinear constraints are commonly relaxed by linear Reformulation-Linearization-Technique (RLT) inequalities~\cite{mccormick1976computability,qualizza2011linear,sherali1999reformulation} for bounded variables. A subsequent line of work strengthens RLT-type relaxations by deriving sharper
convex hull descriptions or strong inequalities for bilinear and quadratic
structures~\cite{anstreicher2021convexhullboundedproducts,gu2022liftingconvexbipartite,davarnia2017simultaneousconvexification,zhu2026axisalignedrelaxations,Locatelli2014}.
A complementary line of work treats $X$ as  a dense matrix and imposes positive semidefinite (PSD) constraints \cite{anstreicher2009sdp,bao2011semidefinite}, outer-approximated by eigenvector inequalities ~\cite{qualizza2011linear,sherali2002enhancing} for strengthening linear programming (LP) relaxations. Some approaches exploit eigenvalue reformulations of the quadratic functions~\cite{billionnet2012extending,dong2018compact,elloumi2019global,saxena2010convex}. However, eigenvector inequalities and eigenvalue reformulations typically do not preserve sparsity \cite{dey2022cutting}: the resulting inequalities can be dense.

Intersection cut methods for QCQPs are closely related to earlier developments for mixed-integer linear programming (MILP)~\cite{balas1971intersection} and nonlinear optimization~\cite{tuy1964concave}. A common \emph{a priori} strategy first selects an $S$-free set, such as a bilinear-free, outer-product-free, or quadratic-free set~\cite{bienstock2020outer,fischetti2020branch,munoz2022maximal,Munoz2025}, and then derives a cut from the simplicial cone associated with the tableau of an LP relaxation \cite{Chmiela2025}. Although these free sets could admit low-dimensional descriptions, the simplicial cone still lives in the original variable space, so the generated cuts can be dense \cite{XuLiberti2025}. This suggests seeking a framework in which a low-dimensional and sparse simplicial cone is constructed first, whereas the cut-generation process is subsequently coordinated with that cone.
Related examples include SDP and RLT representations \cite{anstreicher2010computable}  for the convex hulls of low-dimensional quadratic functions over simple domains.

The derivation of general-purpose, sparsity-preserving, and strong valid inequalities for QCQPs that can be effectively integrated into MINLP solvers remains a significant challenge.
We are motivated by the fact that mixed-integer rounding (MIR) inequalities can accelerate MILP solving~\cite{nemhauser1988integer,nemhauser1990recursive} through an efficient and sparsity-preserving heuristic \cite{marchand2001aggregation}. MIR inequalities show that one can begin with a small number of base valid inequalities, derive strong cuts by adaptive rounding rather than from a fixed disjunction, and still maintain a useful balance between strength and sparsity. At the elementary-closure level, MIR and split/disjunctive inequalities are identical \cite{balas1998disjunctive,cornuejols2001elementary}, so both families of inequalities are of the same strength and $\mathcal{NP}$-hard to separate. MIR inequalities rely solely on the integrality of a single aggregated variable, and the generalization to two-row intersection cuts~\cite{Andersen07} leverages a richer nonconvex geometry formed from two aggregated integer variables.

 We use this perspective to ask whether QCQP cuts can be generated from an equally small projected description. We consider a general project-then-lift approach to generate valid inequalities for a nonconvex set $\cX \subset \bR^n$: one identifies base valid inequalities, forms an affine map $\cA: \bR^n \to \bR^p$ that projects them into a simplicial cone $K \supseteq \cA(\cX)$ in the image space, identifies a set $S \supseteq \cA(\cX)$ capturing the relevant nonconvex geometry, derives valid inequalities for the closed convex hull $\clos(\conv(S \cap K))$, and then lifts them back to the original space. When the complement set $S^c$ is convex and the apex of the simplicial cone lies in the interior of $S^c$, the projected inequalities become intersection cuts in the image space.

  For QCQPs, we exploit the structure of the constraint system with the sparse lifting
  $X_{\cE}=(xx^\top)_{\cE}$ instead of considering the lifting alone.
  Suppose that the extended formulation contains the following two base valid inequalities, which together define a two-row relaxation of
    \eqref{eq:lifted-formulation-intro}:
\begin{equation}
  \label{eq:base-cut}
  \begin{aligned}
     & \phi_1 + \langle \Theta_1, X \rangle + \theta_1^\top x \ge 0, \\
     & \phi_2 + \langle \Theta_2, X \rangle + \theta_2^\top x \ge 0,
  \end{aligned}
\end{equation}
where $\Theta_1,\Theta_2\in\bS^n_{\cE}$ in accordance with the support convention above (e.g., both inequalities can be obtained via aggregations of constraints in \eqref{eq:lifted-formulation-intro}).
In the context of project-then-lift, we let the affine map $\cA$ be $ \cA_\JR(X,x)\doteq( \langle \Theta_1, X \rangle + \theta_1^\top x, \langle \Theta_2, X \rangle + \theta_2^\top x)$ and denote $y \doteq \cA_\JR(X,x)$. Consider the associated quadratic map $F=(f_1,f_2):\bR^n \to \bR^2$, with $f_i(x) \doteq x^\top \Theta_i x + \theta_i^\top x$. When $X_{\cE} = (xx^\top)_{\cE}$, we have $y_i = f_i(x)$, i.e., $y \in F(\bR^n)$. The simplicial cone $K$ becomes a two-dimensional cone $K_\JR \doteq  \{ y \in \bR^2: \forall i \in \{1,2\},\; \phi_i+ y_i \ge 0\}$, which is derived from the affine map applied on the base inequalities \eqref{eq:base-cut}. Then, we apply the project-then-lift approach, handling the geometry where it is simplest.

The complete characterization of $F(\bR^n)$~\cite{flores2016characterizing,flores2021}
distinguishes between convex and nonconvex cases; the convex case is closely related
to the hidden convexity underlying the S-lemma~\cite{dines1941mapping,polik2007survey,
polyak1998convexity,yakubovich1971s}. This hidden convexity has been widely exploited
to tackle QCQPs~\cite{blekherman2024aggregations,beck2006strong,burer2015trust,
dey2022obtaining,wang2022tightness,xia2016s,eichfelder2026convexlikeness}. In particular, the convex hull of the
intersection of two open quadratic sets in $x$-space can be obtained through aggregation
and SDP lifting~\cite{yildiran2009convex}. Related conic outer approximations, together
with exactness conditions, have been studied for intersections of one conic quadratic set
and one quadratic set~\cite{burer2017second,modaresi2017convex}. In this work, we expose nonconvex joint ranges and use their
closed-form descriptions to generate cuts for the extended QCQP
formulation \eqref{eq:lifted-formulation-intro} in $(X,x)$-space.

Our main contributions follow the project-then-lift pipeline. First, for nonconvex
$F(\bR^n)$, we characterize a closed-form description of the (closed) convex hull
$\clos(\conv(F(\bR^n)\cap K_\JR))$; for convex $F(\bR^n)$, we provide an SDP
representation. Second, we derive \emph{joint-range inequalities} that are valid for
the convex hull and lift them back through the affine map. This project-then-lift
approach preserves sparsity in both the nonconvex and convex SDP cases, because the
support of each lifted joint-range inequality is controlled by the support of the
selected base inequalities. Third, we introduce \emph{secant mixed-joint-range
inequalities}, which are analogous to MIR inequalities in their treatment of ``problematic''
fractional linear combinations of integer variables as a  ``mixed'' continuous  variable and are designed to expose nonconvex geometry for cut
generation~\cite{nemhauser1988integer,nemhauser1990recursive}.

The paper is organized as follows. \Cref{sec:milp} reviews MIR inequalities from a project-then-lift perspective that motivates our construction.  \Cref{sec.intersect} proposes to combine intersection cuts with the project-then-lift approach. \Cref{sec:parabolic} studies the two-dimensional convex hull $\clos(\conv(F(\bR^n)\cap K_\JR))$ and the resulting joint-range inequalities. \Cref{sec:MIR} develops secant mixed-joint-range inequalities. \Cref{sec:experimental} reports preliminary geometric experiments comparing the projected RLT relaxation with the convex hull.

\section{Mixed-Integer Rounding Prototypes}
\label{sec:milp}
We review mixed-integer rounding inequalities as prototypes and show that they can be derived via the project-then-lift approach.
We consider the mixed-integer set
\begin{equation}
  \label{eq.milp}
  \cX_{\MI} \doteq (\bR^{n_N} \times \bZ^{n_I}) \cap P,
\end{equation}
where $\bZ$ is the set of integers and $P$ is a polyhedron.  Let $N \doteq \{1,\ldots,n_N\}$ and $I \doteq \{n_N+1,\ldots,n_N+n_I\}$, and write $x_N=(x_i)_{i\in N}$ and $x_I=(x_i)_{i\in I}$ for the continuous and integer components.

We also consider the simple mixed-integer set defined as
\begin{equation}
  \label{eq.mi-set}
  Y_{\MI} \doteq \{(y_1,y_2) \in \bZ \times \bR_+ : y_1 \le \pi_0 + y_2 \}.
\end{equation}
Equivalently, $Y_{\MI}=S_{\MI}\cap K_{\MI}$ with $S_{\MI} \doteq \bZ\times\bR$ and
\begin{equation}
  \label{eq.cone-mir}
  K_{\MI} \doteq \{(y_1,y_2): y_1-y_2\le \pi_0,\ y_2\ge 0\}.
\end{equation}
We call $\conv(Y_{\MI})$ the simple mixed-integer hull.

\begin{lemma}[\cite{nemhauser1988integer}]\label{lem:MIR}
  Let \(f_0 \doteq \pi_0-\lfloor \pi_0\rfloor\).  The simple MIR inequality
  \begin{equation}\label{eq:MIR}
    y_1 \le \lfloor \pi_0\rfloor + \frac{y_2}{1-f_0}
  \end{equation}
  is valid for \(Y_{\MI}\).  Together with the inequalities defining $K_{\MI}$, it describes the simple mixed-integer hull $\conv(Y_{\MI})$; see \Cref{fig:mir}.
\end{lemma}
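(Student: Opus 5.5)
We prove the two claims separately: validity of \eqref{eq:MIR} by a case split on the integer $y_1$, and the hull description by computing the extreme points and rays of the polyhedron cut out by \eqref{eq:MIR} together with $K_{\MI}$ and checking that each lies in $\conv(Y_{\MI})$ or is a recession direction of it. If $f_0=0$, then \eqref{eq:MIR} is just $y_1\le \pi_0+y_2$, and $Y_{\MI}$ is trivially described, since its hull is $K_{\MI}$ (the vertex $(\pi_0,0)$ is integral). Hence we assume $0<f_0<1$.

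\textbf{Validity.} Take $(y_1,y_2)\in Y_{\MI}$, so $y_1\in\bZ$. If $y_1\le\lfloor\pi_0\rfloor$, then \eqref{eq:MIR} holds because $y_2\ge 0$. Otherwise $y_1\ge\lfloor\pi_0\rfloor+1$. Writing $d\doteq y_1-\lfloor\pi_0\rfloor\ge 1$, the constraint $y_1\le\pi_0+y_2$ gives $y_2\ge d-f_0$. We therefore need $d\le (d-f_0)/(1-f_0)$, which is equivalent to $d(1-f_0)\le d-f_0$, i.e.\ $f_0(d-1)\ge 0$. This holds.

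\textbf{Hull description.} Let $Q\doteq\{(y_1,y_2): y_2\ge0,\ y_1-y_2\le\pi_0,\ y_1\le\lfloor\pi_0\rfloor+y_2/(1-f_0)\}$. By validity and convexity of $Q$, we have $\conv(Y_{\MI})\subseteq Q$. For the reverse inclusion we describe $Q$ explicitly as a two-dimensional polyhedron.

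1. \emph{Extreme points.} Intersecting the lines pairwise:
\begin{itemize}
\item $y_2=0$ with \eqref{eq:MIR} gives $(\lfloor\pi_0\rfloor,0)$. This point is feasible for the cone, since $\lfloor\pi_0\rfloor\le\pi_0$, and it is integral.
\item \eqref{eq:MIR} with $y_1-y_2=\pi_0$ gives $y_2=f_0\cdot\frac{1-f_0}{1-f_0}\cdot\frac{1}{f_0}(1-f_0)$. Solving directly from $\lfloor\pi_0\rfloor+y_2/(1-f_0)=\pi_0+y_2$ yields $y_2\,f_0/(1-f_0)=f_0$, so $y_2=1-f_0$ and $y_1=\lfloor\pi_0\rfloor+1$. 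This point is integral with $y_2\ge0$, hence in $Y_{\MI}$.
\item The vertex $(\pi_0,0)$ of $K_{\MI}$ violates \eqref{eq:MIR} and is cut off.
\end{itemize}
So the extreme points of $Q$ are exactly these two points of $Y_{\MI}$.

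2. \emph{Recession directions.} The recession cone of $Q$ is generated by $(-1,0)$ and $(1,1)$; it is the recession cone of $K_{\MI}$, because the \eqref{eq:MIR} constraint is implied at infinity along these directions. Both are also recession directions of $\conv(Y_{\MI})$: from any point of $Y_{\MI}$ we may decrease $y_1$ by integers, or add $(k,k)$ for $k\in\bZ_+$. Since $\conv(Y_{\MI})$ is convex and contains $(y+kr)$ for all $k\in\bZ_+$, taking convex combinations shows it contains $y+tr$ for all real $t\ge0$.

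3. \emph{Conclusion.} By Minkowski--Weyl, $Q$ is the convex hull of its two extreme points plus its recession cone, and all of these lie in $\conv(Y_{\MI})$. Hence $Q=\conv(Y_{\MI})$.

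The main obstacle is the vertex computation in step 1: we must verify that no other vertex arises, in particular that the two cone facets meet only at the cut-off point $(\pi_0,0)$. Because the polyhedron is two-dimensional with three constraints, this is a direct check.
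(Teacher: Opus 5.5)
Your proof is correct, and it takes a different route from the paper, which gives no proof of this lemma and only cites Nemhauser--Wolsey. The paper's closest argument is \Cref{rem:mir-as-lifted-intersection-cut}. There \eqref{eq:MIR} is recovered as the intersection cut of $K_{\MI}$ with the $\bZ\times\bR$-free strip $C_{\BI}\times\bR$. The cone has apex $(\pi_0,0)$ and rays $(1,1)$ and $(-1,0)$, and the step lengths along these rays are $1-f_0$ and $f_0$.

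\textbf{What each approach buys.} The intersection-cut route gives validity directly from the free-set property, with no integer case analysis. It also explains where the cut comes from: the cut is the line through the two ray-exit points $(\lfloor\pi_0\rfloor+1,1-f_0)$ and $(\lfloor\pi_0\rfloor,0)$, which are exactly your two vertices. However, intersection-cut theory alone yields only $\conv(Y_{\MI})\subseteq Q$. The reverse inclusion, which is the hull statement, needs precisely your additional observations: the exit points are mixed-integer feasible, and integer steps along the cone's rays preserve $Y_{\MI}$. Your direct argument also covers $f_0=0$, where the apex is not interior to any integer strip and the remark's construction does not apply. So your route proves the full lemma, while the paper's viewpoint proves validity and identifies the cut geometrically.

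\textbf{Points to tighten.}
\begin{itemize}
\item Delete the garbled first expression for $y_2$ in your second bullet. It happens to simplify to $1-f_0$, but it is not a derivation.
\item Replace ``implied at infinity'' with the actual reason: for $d_2\ge 0$, the inequality $d_1\le d_2$ implies $d_1\le d_2/(1-f_0)$ because $1/(1-f_0)\ge 1$.
\item Your ``main obstacle'' remark targets the wrong check. What guarantees exactly three vertex candidates is that, for $0<f_0<1$, no two of the normals $(0,-1)$, $(1,-1)$, $(1,-1/(1-f_0))$ are parallel.
\item Note that $Q$ is pointed, which Minkowski--Weyl requires. This holds because the normals of $y_2\ge0$ and $y_1-y_2\le\pi_0$ are linearly independent.
\item For $f_0=0$, do not call the hull claim trivial. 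Say that it follows from your Steps~2--3 with the single vertex $(\pi_0,0)\in Y_{\MI}$.
\end{itemize}
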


\begin{figure}[!htbp]
  \centering
  \begin{tikzpicture}[auto,scale=1,
    background rectangle/.style={draw=oxfordblue!20, fill=white, rounded corners}, 
    show background rectangle, 
    inner frame sep=10pt
]
\begin{axis}[
    axis lines = middle,
    axis on top=true,
    xlabel = {$y_1$},
    ylabel = {$y_2$},
    xmin = 0, xmax = 2.4,
    ymin = -0.2, ymax = 1.2,
    xtick = {0, 0.5, 1, 1.5, 2},
    ytick = {0, 0.5, 1},
    grid = both,
    minor tick num = 4,
    grid style = {line width=.1pt, draw=gray!20},
    major grid style = {line width=.2pt, draw=gray!50},
    width = 0.6\linewidth, height = 0.48\linewidth,
    clip = true
]

\addplot[name path=A, domain=0:2.5, draw=none] {1.2};
\addplot[name path=B, domain=0:1.5, draw=none] {0};
\addplot[name path=C, domain=1.5:2.5, draw=none] {1.2*(x-1.5)};

\fill[gray!40] (axis cs:0,0) -- (axis cs:0,1.2) -- (axis cs:2.5,1.2) -- (axis cs:1.5,0) -- cycle;

\draw[thick] (axis cs:0, -0.2) -- (axis cs:0, 1.2);
\draw[thick] (axis cs:1, -0.2) -- (axis cs:1, 1.2);
\draw[thick] (axis cs:2, -0.2) -- (axis cs:2, 1.2);

\addplot[domain=0.3:2.4, color=red!70!black, ultra thick] {0.6*(x-1)};

\end{axis}
\end{tikzpicture}
  \caption{The simple mixed-integer set $Y_{\MI} = \{(y_1,y_2)\in\bZ\times\bR_+ : y_1 \le \pi_0 + y_2\}$ and the simple mixed-integer hull $\conv(Y_{\MI})$ (shaded), whose upper boundary is defined by the MIR inequality \eqref{eq:MIR}.}
  \label{fig:mir}
\end{figure}

\begin{lemma}[\cite{nemhauser1988integer}]
  \label{lem:alternative-mir}
  Assume that a valid inequality for $P$ can be written as
  \begin{equation}
    \label{eq:alternative-mir-form}
    \pi_I^\top x_I - (\mu_0+\mu^\top x) \le \pi_0,
  \end{equation}
  where $\pi_I\in\bZ^{n_I}$ and $\mu_0+\mu^\top x\ge 0$ is valid for $P$.  With $f_0 \doteq \pi_0-\lfloor\pi_0\rfloor$, the MIR inequality
  \begin{equation}
    \label{eq:alternative-mir}
    \pi_I^\top x_I
    - \frac{1}{1-f_0}(\mu_0+\mu^\top x)
    \le \lfloor\pi_0\rfloor
  \end{equation}
  is valid for $\cX_{\MI}$ in \eqref{eq.milp}.
\end{lemma}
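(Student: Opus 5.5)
The plan is to reduce Lemma~\ref{lem:alternative-mir} to the simple MIR inequality of Lemma~\ref{lem:MIR} through the project-then-lift map. Fix an arbitrary point $x \in \cX_{\MI}$ and define the affine map
\[
\cA_{\MI}(x) \doteq \bigl(y_1,y_2\bigr) = \bigl(\pi_I^\top x_I,\; \mu_0+\mu^\top x\bigr).
\]
I will show that $(y_1,y_2)\in Y_{\MI}$. Then the inequality \eqref{eq:MIR}, valid on $Y_{\MI}$, evaluated at $\cA_{\MI}(x)$ is exactly \eqref{eq:alternative-mir}.

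The membership $(y_1,y_2)\in Y_{\MI}$ is checked coordinate by coordinate.
\begin{itemize}
\item Since $\pi_I\in\bZ^{n_I}$ and $x_I\in\bZ^{n_I}$, the value $y_1=\pi_I^\top x_I$ is an integer, so $y_1\in\bZ$.
\item Since $\mu_0+\mu^\top x\ge 0$ is valid for $P\supseteq\cX_{\MI}$, we have $y_2\ge 0$.
\item Since \eqref{eq:alternative-mir-form} is valid for $P$, we have $y_1-y_2\le\pi_0$, that is, $y_1\le \pi_0+y_2$.
\end{itemize}
Hence $\cA_{\MI}(\cX_{\MI})\subseteq S_{\MI}\cap K_{\MI}=Y_{\MI}$.

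Applying Lemma~\ref{lem:MIR} gives $y_1\le\lfloor\pi_0\rfloor + y_2/(1-f_0)$. Substituting back the definitions of $y_1$ and $y_2$ yields \eqref{eq:alternative-mir} for every $x\in\cX_{\MI}$. There is one degenerate case: if $f_0=0$, then $1/(1-f_0)=1$ and the inequality coincides with the base inequality, which is still covered by the lemma. Since $f_0\in[0,1)$, the expression $1/(1-f_0)$ is always well defined.

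There is no real obstacle here. The only point requiring care is to use the integrality of $\pi_I$ together with that of $x_I$, so that the projected first coordinate lands in $\bZ$, and to note that validity of both base inequalities on $P$ transfers to $\cX_{\MI}\subseteq P$.

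If one wishes to be self-contained rather than cite Lemma~\ref{lem:MIR}, its validity can be checked directly by splitting into cases on the integer $y_1$:
\begin{itemize}
\item If $y_1\le\lfloor\pi_0\rfloor$, the bound holds because $y_2\ge0$.
\item If $y_1\ge\lfloor\pi_0\rfloor+1$, then $y_2\ge y_1-\pi_0\ge (y_1-\lfloor\pi_0\rfloor)-f_0$. Using $y_1-\lfloor\pi_0\rfloor\ge1$, this gives $y_2\ge (1-f_0)(y_1-\lfloor\pi_0\rfloor)$, which rearranges to the claim.
\end{itemize}
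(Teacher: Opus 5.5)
Your proposal is correct and is essentially the paper's own argument: the paper cites this lemma, and Remark~\ref{rem:mir-projected} justifies it exactly as you do, by mapping $\cX_{\MI}$ through $\cA_{\MI}$ into $Y_{\MI}=S_{\MI}\cap K_{\MI}$ and lifting the simple MIR inequality \eqref{eq:MIR} back. Your optional case split on the integer $y_1$ is a valid elementary proof of Lemma~\ref{lem:MIR}; the paper instead cites that lemma and recovers it as an intersection cut in Remark~\ref{rem:mir-as-lifted-intersection-cut}, which assumes $f_0\in(0,1)$, whereas your case split also covers $f_0=0$ with no special treatment.
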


\begin{remark}[MIR inequalities as lifting of projected  inequalities]
  \label{rem:mir-projected}
  Define the affine map
  \begin{equation}
    \label{eq:mir-affine-map}
    \cA_{\MI}:x\mapsto (y_1,y_2)
    \doteq (\pi_I^\top x_I,\; \mu_0+\mu^\top x).
  \end{equation}
  For $x\in\cX_{\MI}$, the value $y_1$ is integral. For $x\in P$, we have the continuous ``mixed'' term $y_2\ge 0$, and \eqref{eq:alternative-mir-form} maps to $y_1-y_2\le \pi_0$. Hence $\cA_{\MI}(P)$ is included in the cone $K_{\MI}$ defined by \eqref{eq.cone-mir}, and the simple MIR inequality \eqref{eq:MIR} lifts exactly to MIR inequality~\eqref{eq:alternative-mir}.
\end{remark}

\begin{remark}[MIR inequalities versus Chv\'atal inequalities]
  \label{rem:mir-vs-chvatal}
  Setting the ``mixed'' term $y_2$ to zero gives the simple rounding inequality
  \begin{equation}
    \label{eq:rounding}
    y_1 \le \lfloor \pi_0 \rfloor
  \end{equation}
  valid for the simple integer set
  \begin{equation}
    \label{eq:integer-set}
    \{y_1\in\bZ: y_1 \le \pi_0\}.
  \end{equation}
  Under the lifting $y_1=\pi_I^\top x_I$, this becomes the Chv\'atal inequality \cite{chvatal1973edmonds}:
  \begin{equation}
    \label{eq:chvatal-inequality}
    \pi_I^\top x_I \le \lfloor \pi_0\rfloor.
  \end{equation}
  MIR is more flexible because the ``mixed'' continuous variable $y_2$ can relax the problematic  fractional term  $ \mu_0+\mu^\top x$ in \eqref{eq:mir-affine-map} of the aggregation.
\end{remark}

The broad applicability of MIR inequalities to general MILP problems rests on the aggregation-based MIR heuristic \cite{marchand2001aggregation}.

\section{Project-then-Lift and Intersection Cuts}
\label{sec.intersect}
Intersection cuts \cite{balas1971intersection,tuy1964concave} provide the second useful interpretation of MIR inequalities and help construct ``mixed'' inequalities for QCQPs.  Let $S\subseteq\bR^p$.  A closed convex set $C$ is \emph{$S$-free} if $\inter(C)\cap S=\varnothing$.  Let $K$ be a simplicial cone with apex $y'$ and hyperplane/ray representations:
\begin{equation}
  \label{eq:intersection-cone}
  K = \{y\in\bR^p: B(y-y')\le 0\}
  = \bigl\{y' + \textstyle\sum_{j=1}^{p}\eta_j r_j:\eta\in\bR^p_+\bigr\},
\end{equation}
where $B$ is invertible, $r_j$ is the $j$-th column of $-B^{-1}$, and $\eta_j \doteq -B_j(y-y')$.  If $y'\in\inter(C)$, the step length along each ray is
\begin{equation}
  \label{eq:intersection-step-length}
  \eta^*_j \doteq \sup\{\eta_j\ge 0: y' + \eta_j r_j \in C\}.
\end{equation}
We allow $\eta^*_j = \infty$ (an infinite intersection point) and define $1/\infty = 0$.  The standard intersection cut for $K\setminus\inter(C)$ is
\begin{equation}
  \label{eq:intersection-cut}
  \sum_{j=1}^{p}\frac{1}{\eta^*_j}B_j(y-y') \le -1,
\end{equation}
equivalently $\sum_{j=1}^{p}\eta_j/\eta^*_j\ge 1$.

We describe the difference between project-then-lift intersection cuts and full-space intersection cuts. The former are derived from cuts in a low-dimensional projected space, whereas the latter are derived from cuts in the full-dimensional space.

Let $\cA:\bR^n\to\bR^p$, $x\mapsto Ax+a$, be an affine map. As the MIR discussion shows, inequalities can be derived from an outer approximation of the projected feasible region $\cA(\cX_{\MI})$ of a MILP. The same strategy can be used to derive intersection cuts for any nonconvex set $\cX \subseteq \bR^n$. The following theorem describes a project-then-lift intersection cut (for short, lifted intersection cut).

\begin{theorem}
  \label{thm:lifted-intersection-cut}
  Let $\cX\subseteq\bR^n$, let $S\subseteq\bR^p$ satisfy $\cA(\cX)\subseteq S$, and let $C\subseteq\bR^p$ be a full-dimensional closed convex $S$-free set.  Let $K\supseteq\cA(\cX)$ be a simplicial cone with apex $y'$ and representation \eqref{eq:intersection-cone}.  If $y'\in\inter(C)$, then
  \begin{equation}
    \label{eq:lifted-intersection-cut}
    \sum_{j=1}^{p}\frac{1}{\eta^*_j}B_j\bigl(\cA(x)-y'\bigr) \le -1
  \end{equation}
  is valid for $\cX$.
\end{theorem}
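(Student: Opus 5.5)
The plan is to reduce validity in $x$-space to validity of the standard intersection cut \eqref{eq:intersection-cut} in the projected $y$-space, and then pull it back through the affine map $\cA$. Fix $x\in\cX$ and set $y\doteq\cA(x)$. Since $\cA(\cX)\subseteq S$ and $\cA(\cX)\subseteq K$, we have $y\in S\cap K$. The inequality \eqref{eq:lifted-intersection-cut} at $x$ is exactly \eqref{eq:intersection-cut} at $y$. It therefore suffices to show that every $y\in K\cap S$ satisfies $\sum_j \eta_j/\eta^*_j\ge 1$, where $\eta_j=-B_j(y-y')\ge 0$.

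\textbf{Step 1: $y\notin\inter(C)$.} This holds because $C$ is $S$-free and $y\in S$.

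\textbf{Step 2: geometry of the cone.} Write $y=y'+\sum_j\eta_j r_j$ with $\eta\in\bR^p_+$, using the ray representation \eqref{eq:intersection-cone}. Here $B$ is invertible, so $\eta$ is uniquely determined. Argue by contradiction and suppose $\lambda\doteq\sum_j\eta_j/\eta^*_j<1$.

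\textbf{Step 3: the convexity argument.} This is the main step. Let $J\doteq\{j:\eta_j>0\}$. If $J=\varnothing$, then $y=y'\in\inter(C)$, which contradicts Step 1. Otherwise, for $j\in J$ with $\eta^*_j<\infty$, the point $y'+\eta^*_j r_j$ lies in $C$ because $C$ is closed. For $j$ with $\eta^*_j=\infty$, the whole ray $y'+\mathbb{R}_+ r_j$ lies in $C$, so $r_j$ belongs to the recession cone of $C$.

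Now write
\[
y = \Bigl(1-\lambda\Bigr) y' + \sum_{j\in J,\ \eta^*_j<\infty}\frac{\eta_j}{\eta^*_j}\bigl(y'+\eta^*_j r_j\bigr) + \sum_{j\in J,\ \eta^*_j=\infty}\eta_j r_j .
\]
The first two groups of terms form a convex combination of $y'$ and points of $C$. The coefficient $1-\lambda$ on $y'$ is strictly positive, and $y'\in\inter(C)$. A convex combination that puts positive weight on an interior point and the remaining weight on points of a closed convex set lies in $\inter(C)$; this is the standard line-segment principle. Adding recession directions $\eta_j r_j$ then keeps the point in $\inter(C)$, since $\inter(C)+\mathrm{rec}(C)\subseteq\inter(C)$. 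Hence $y\in\inter(C)$, contradicting Step 1, and so $\lambda\ge1$.

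\textbf{Main obstacle.} The care is needed in handling $\eta^*_j=\infty$ via the recession cone, and in justifying the interior claim through the line-segment principle rather than mere membership in $C$. With $\lambda\ge 1$ established, unwinding $\eta_j=-B_j(\cA(x)-y')$ yields \eqref{eq:lifted-intersection-cut}.
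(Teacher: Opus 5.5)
Your proof is correct and follows the same route as the paper. Both arguments use $S$-freeness to get $\cA(\cX)\subseteq K\setminus\inter(C)$, apply validity of the standard intersection cut \eqref{eq:intersection-cut} there, and substitute $y=\cA(x)$. The only difference is that the paper cites validity of \eqref{eq:intersection-cut} as standard, whereas you prove it directly: the line-segment principle handles finite step lengths, and $\inter(C)+\mathrm{rec}(C)\subseteq\inter(C)$ handles $\eta^*_j=\infty$, which makes your argument a sound self-contained version.
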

\begin{proof}
  Since $\cA(\cX)\subseteq S$ and $C$ is $S$-free, $\cA(\cX)\cap\inter(C)=\varnothing$.  Together with $\cA(\cX)\subseteq K$, this implies $\cA(\cX)\subseteq K\setminus\inter(C)$.  The intersection cut \eqref{eq:intersection-cut} is valid for $\conv(K\setminus\inter(C))$, hence for $\cA(\cX)$.  Substituting the image-space point $y=\cA(x)$ gives \eqref{eq:lifted-intersection-cut}.
\end{proof}

For comparison, one can lift the forbidden set rather than the cut.  The preimage
\[
  \cA^{-1}(C) = \{x\in\bR^n: Ax+a\in C\}
\]
is $\cX$-free under the following standard condition.

\begin{theorem}
  \label{thm:preimage-free}
  Let $\cX\subseteq\bR^n$, and let $\cA:\bR^n\to\bR^p$, $x\mapsto Ax+a$, be a surjective affine map.  If $C\subseteq\bR^p$ is a closed, convex, $\cA(\cX)$-free set, then $\cA^{-1}(C)$ is a closed, convex, $\cX$-free set.
\end{theorem}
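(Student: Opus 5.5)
We verify the three properties of $\cA^{-1}(C)$ in turn: closedness and convexity are routine, and the $\cX$-free property reduces to an interior comparison.

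\emph{Closedness and convexity.} The map $\cA$ is continuous, being affine, so $\cA^{-1}(C)$ is closed as the preimage of a closed set. For convexity, take $x,x'\in\cA^{-1}(C)$ and $\lambda\in[0,1]$. Then
\[
\cA(\lambda x+(1-\lambda)x')=\lambda\cA(x)+(1-\lambda)\cA(x'),
\]
which lies in $C$ because $C$ is convex. Hence $\lambda x+(1-\lambda)x'\in\cA^{-1}(C)$.

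\emph{$\cX$-freeness.} This is the substantive step. We must show $\inter(\cA^{-1}(C))\cap\cX=\varnothing$. The plan is to prove the key inclusion
\[
\cA\bigl(\inter(\cA^{-1}(C))\bigr)\subseteq\inter(C).
\]
Once this holds, suppose some $x\in\cX$ lies in $\inter(\cA^{-1}(C))$. Then $\cA(x)\in\cA(\cX)\cap\inter(C)$, which contradicts the assumption that $C$ is $\cA(\cX)$-free.

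\emph{Proof of the key inclusion.} This is where surjectivity of $\cA$ is used.
\begin{enumerate}
\item Write $\cA(x)=Ax+a$. Since $\cA$ is surjective, $A$ has full row rank $p$.
\item By the open mapping property for linear surjections, $A$ maps open sets to open sets. Concretely, $A$ has a right inverse $A^+$, so $A(\bB(x,\varepsilon))\supseteq\bB(Ax,\varepsilon/\norm{A^+})$.
\item Let $x\in\inter(\cA^{-1}(C))$, and choose $\varepsilon>0$ with $\bB(x,\varepsilon)\subseteq\cA^{-1}(C)$.
\item Apply $\cA$: $\cA(\bB(x,\varepsilon))\subseteq C$, and by step 2 this image contains the open ball of radius $\varepsilon/\norm{A^+}$ around $\cA(x)$.
\item Therefore $\cA(x)\in\inter(C)$.
\end{enumerate}

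Equivalently, one could argue that $\inter(\cA^{-1}(C))=\cA^{-1}(\inter(C))$ for a surjective affine map. Only the inclusion ``$\subseteq$'' is needed here, and the argument above establishes it.

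The only real obstacle is this interior comparison. Without surjectivity it fails: for example, if $\cA$ is constant with value on $\bd(C)$, then $\cA^{-1}(C)=\bR^n$ has nonempty interior while $\inter(C)$ does not contain that value. This explains why surjectivity is assumed.
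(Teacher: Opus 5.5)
Your proof is correct and follows essentially the same route as the paper. Closedness and convexity come from continuity and affineness, and $\cX$-freeness comes from the openness of a surjective affine map, which sends interior points of $\cA^{-1}(C)$ into $\inter(C)$. Your explicit right-inverse bound simply makes quantitative the open-mapping fact that the paper cites directly.
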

\begin{proof}
  Closedness follows from continuity  of $\cA$, and convexity holds because $\cA$ is affine.  If $x\in\inter(\cA^{-1}(C))$, then an open neighborhood $U$ of $x$ satisfies $U\subseteq\cA^{-1}(C)$.  Because surjectivity makes an affine map open, $\cA(U)$ is an open neighborhood of $\cA(x)$ contained in $C$, so $\cA(x)\in\inter(C)$.  Thus any $x\in \cX\cap\inter(\cA^{-1}(C))$ would imply $\cA(x)\in\cA(\cX)\cap\inter(C)$, a contradiction.
\end{proof}
Note that this lifting is similar to the lifting of a lattice-free set to a mixed-integer-free set; see \cite{conforti2011corner}.

In this case, we usually seek a corner polyhedron in the original space, rather than in the projected space.
Let $P_{\mathrm{LP}}\supseteq \cX$ be an outer approximation corresponding to the linear relaxation.  Suppose $x'$ is a basic solution corresponding to a vertex of $P_{\mathrm{LP}}$, and let $B_{\mathrm{tab}}$ be the nonsingular matrix formed by the active tableau rows at $x'$.  Let $ K_{\mathrm{tab}}$ be the associated tableau cone:
\[
  K_{\mathrm{tab}} = \{x\in\bR^n: B_{\mathrm{tab}}(x-x')\le 0\}
  = \bigl\{x' + \textstyle\sum_{i=1}^{n}\xi_i r'_i:\xi\in\bR^n_+\bigr\},
\]
where $r'_i$ is the $i$-th column of $-(B_{\mathrm{tab}})^{-1}$ and $\xi_i \doteq -(B_{\mathrm{tab}})_i(x-x')$.  If, in addition, $x'\in\inter(\cA^{-1}(C))$, then the usual full-space intersection cut is
\begin{equation}
  \label{eq:fullspace-intersection-cut}
  \sum_{i=1}^{n}\frac{1}{\xi_i^*}(B_{\mathrm{tab}})_i(x-x') \le -1,
\end{equation}
where
\[
  \xi_i^* \doteq \sup\{\xi_i \ge 0: x' + \xi_i r'_i\in \cA^{-1}(C)\}.
\]

This cut is valid for $\cX$ because $\cX\subseteq P_{\mathrm{LP}}\subseteq K_{\mathrm{tab}}$ and $\cA^{-1}(C)$ is $\cX$-free.  It combines $n$ tableau rows.  If each row of $B_{\mathrm{tab}}$ and of the product $BA$ (where $B$ is the matrix from~\eqref{eq:intersection-cone}) has at most $k$ nonzeros, \eqref{eq:fullspace-intersection-cut} can have up to $nk$ nonzeros, whereas the projected lift \eqref{eq:lifted-intersection-cut} combines only $p$ rows of $BA$ and has at most $pk$ nonzeros. When $p\ll n$ and the row sparsities of $B_{\mathrm{tab}}$ and $BA$ are comparable, this suggests that lifted cuts can be substantially sparser than full-space intersection cuts. In addition, the construction of lifted intersection cuts does not require a simplex tableau from the linear relaxation.

\begin{remark}[MIR  inequalities as lifted intersection cuts]
  \label{rem:mir-as-lifted-intersection-cut}
  The simple MIR inequality, and therefore the lifted MIR inequalities above, can also be viewed as a lifted intersection cut.  Use the affine image in \eqref{eq:mir-affine-map} and introduce a slack $s\ge 0$ through $y_1+s=\pi_0+y_2$.  Let $\eta=(\eta_1,\eta_2)\doteq(y_2,s)$, so
  \[
    y_1 = \pi_0+\eta_1-\eta_2.
  \]
  For $f_0\in(0,1)$, the interval $C_{\BI}\doteq[\lfloor\pi_0\rfloor,\lfloor\pi_0\rfloor+1]$ is $\bZ$-free and contains $\pi_0$ in its interior.  Its preimage in slack space is
  \[
    C_{\MI} \doteq \{\eta\in\bR^2:\pi_0+\eta_1-\eta_2\in C_{\BI}\}.
  \]
  The two coordinate step lengths are
  \begin{align*}
    \eta_1^* & = \sup\{\eta_1\ge 0:(\eta_1,0)\in C_{\MI} \}
    = \lfloor\pi_0\rfloor+1-\pi_0 = 1-f_0,                   \\
    \eta_2^* & = \sup\{\eta_2\ge 0:(0,\eta_2)\in C_{\MI}  \}
    = \pi_0-\lfloor\pi_0\rfloor = f_0.
  \end{align*}
  After substituting $\eta_1=y_2$ and $\eta_2=s=\pi_0+y_2-y_1$, the intersection cut
  \[
    \frac{\eta_1}{1-f_0}+\frac{\eta_2}{f_0}\ge 1
  \]
  becomes a simple MIR inequality:
  \[
    \frac{y_2}{1-f_0}
    + \frac{\pi_0+y_2-y_1}{f_0}
    \ge 1
    \quad\Longleftrightarrow\quad
    y_1 \le \lfloor\pi_0\rfloor+\frac{y_2}{1-f_0}.
  \]
\end{remark}

\section{Projection of QCQPs and Joint-Range Inequalities}\label{sec:parabolic}

In this section, we apply the  project-then-lift approach to QCQP: we start from two base inequalities, map them into a two-dimensional image space, characterize the resulting projected set, and then lift its valid inequalities back to the extended QCQP formulation. The construction is similar to that for MIR inequalities, but the key projected geometry now comes from the joint range of two quadratic functions.
Like Chvátal inequalities \eqref{eq:chvatal-inequality} and simple rounding inequalities \eqref{eq:rounding}, joint-range inequalities do not involve ``mixed'' terms (\Cref{rem:mir-vs-chvatal}); the mixed extension is treated in the next section.

Suppose that the projected construction starts from the two base inequalities~\eqref{eq:base-cut}. They define a two-dimensional image space through the affine map
\begin{equation}
  \label{eq:affine-map}
  \begin{aligned}
    \cA_{\JR}\colon \bS^n \times \bR^n & \to \bR^2,               \\
    (X,x)                                   & \mapsto (y_1,y_2) =
    (\langle \Theta_1, X \rangle + \theta_1^\top x,\;
    \langle \Theta_2, X \rangle + \theta_2^\top x).
  \end{aligned}
\end{equation}
Recall that we denote $y = \cA_{\JR}(X,x)$.
The associated quadratic map is
\begin{equation}
  \label{eq.quadmap}
  F:\bR^n \to \bR^2, \quad x \mapsto (x^\top \Theta_1 x + \theta_1^\top x,\; x^\top \Theta_2 x + \theta_2^\top x).
\end{equation}
Since $\Theta_1, \Theta_2 \in \bS^n_{\cE}$, given any (QCQP-feasible) pair $(X,x)$ satisfying the sparse lifting $X_{\cE}=(xx^\top)_{\cE}$, the aggregated variable $y=\cA_{\JR}(X,x)$ lies in the joint range $F(\bR^n)$.

The joint range $F(\bR^n)$ has been completely characterized in \cite{flores2016characterizing}: it is either convex or takes an explicit nonconvex form. For our purposes, we restate the relevant characterization in a constructive form in the appendix and summarize the resulting procedure as Algorithm~\ref{alg:joint-range}.

\begin{algorithm}[H]
  \caption{Constructive Classification of the Joint Range $F(\bR^n)$}
  \label{alg:joint-range}
  \begin{algorithmic}[1]
    \Require Symmetric matrices $\Theta_1, \Theta_2 \in \bR^{n \times n}$ and vectors $\theta_1, \theta_2 \in \bR^n$
    \Ensure Convexity of $F(\bR^n)$ and its geometric shape if nonconvex
    \If{$\Theta_1=0$ and $\Theta_2=0$}
    \State \Return \textsc{Convex}
    \EndIf
    \If{$\Theta_1$ and $\Theta_2$ are linearly independent}
    \State \Return \textsc{Convex}
    \EndIf
    \State Find normalized direction $d = (d_1, d_2) \neq 0$ such that $d_2 \Theta_1 = d_1 \Theta_2$
    \State $\cN_0 \gets \ker \Theta_1 \cap \ker \Theta_2$
    \If{$\{\theta_1,\theta_2\} \not\subseteq \cN_0^\perp$}
    \State \Return \textsc{Convex}
    \EndIf
    \For{candidate direction $\hat{d} \in \{d, -d\}$}
    \State $\bar Q_{\hat d} \gets \hat{d}_1 \Theta_1 + \hat{d}_2 \Theta_2$
    \State Compute the inertia $(m_+,m_-,m_0)$ of $\bar Q_{\hat d}$ on $\cN_0^\perp$
    \If{$m_-\neq 1$}
    \State \textbf{continue} to the next candidate direction
    \EndIf
    \State Apply the canonical reduction in Appendix~\ref{sec:app-canonical-transformation} to obtain $m_+$, $\Delta$, $t_1$, $t_2$, and the coordinate offset $c=(c_d,c_{d_\perp})^\top$
    \If{$\Delta > 0$ \textbf{or} $t_2 = 0$}
    \State \textbf{continue} to the next candidate direction
    \EndIf
    \State Set $\hat d_\perp=(-\hat d_2,\hat d_1)$ and $D=[\,\hat d\ \hat d_\perp\,]$, and set the canonical shape $\relx{S}_{\JR}$ as follows:
    \Statex \hspace{\algorithmicindent}$\relx{S}_{\JR} = \{(\relx{y}_1,\relx{y}_2) : \relx{y}_1 = \relx{y}_2^2/\Delta\}$ if $\Delta < 0$, $m_+ = 0$
    \Statex \hspace{\algorithmicindent}$\relx{S}_{\JR} = \{(\relx{y}_1,\relx{y}_2) : \relx{y}_1 \ge \relx{y}_2^2/\Delta\}$ if $\Delta < 0$, $m_+ \ge 1$
    \Statex \hspace{\algorithmicindent}$\relx{S}_{\JR} = \bR^2 \setminus \{(\relx{y}_1,0) : \relx{y}_1 \neq 0\}$ if $\Delta = 0$, $m_+ = 1$
    \Statex \hspace{\algorithmicindent}$\relx{S}_{\JR} = \bR^2 \setminus \{(\relx{y}_1,0) : \relx{y}_1 < 0\}$ if $\Delta = 0$, $m_+ \ge 2$
    \State \Return \textsc{Nonconvex}, with $F(\bR^n)=D(\relx{S}_{\JR}+c)$
    \EndFor
    \State \Return \textsc{Convex}
  \end{algorithmic}
\end{algorithm}

The correctness of Algorithm~\ref{alg:joint-range} follows from \Cref{thm:classification} in \Cref{appendix.a}. In the nonconvex case, the classification gives explicit shapes; see \Cref{ex:parabolic} for a concrete case.

Under the affine map $\cA_{\JR}$,  the two base inequalities \eqref{eq:base-cut} define the simplicial cone
\begin{equation}
  \label{eq:KJR}
  K_{\JR} = \{(y_1,y_2)\in\bR^2 : \phi_1  + y_1 \ge 0,\; \phi_2 + y_2 \ge 0\}.
\end{equation}
We consider
\begin{equation}
  \label{eq:projected-joint-range-set}
  S_{\JR} \doteq F(\bR^n),\quad Y_{\JR} \doteq S_{\JR} \cap K_{\JR},\quad H_{\JR} \doteq \clos(\conv(Y_{\JR})).
\end{equation}
We call $Y_{\JR}$ the joint-range set and $H_{\JR}$ the joint-range hull. We call the valid inequalities of $H_{\JR}$  \emph{joint-range inequalities}, which can be lifted back to QCQP.

\subsection{Closed Convex Hull for the Nonconvex  Joint Range}
\label{sec:conv-hull-nonconvex}

For nonconvex $S_{\JR}$, we construct the joint-range hull $H_{\JR}$ and derive its valid inequalities.
When $\Delta = 0$, the canonical shape $\relx{S}_{\JR}$ is a plane missing a line or a ray, so the joint-range hull $H_{\JR}=K_{\JR}$ and no nontrivial valid inequalities arise beyond those defining $K_{\JR}$.

When $\Delta < 0$, the joint range $S_{\JR} = D(\relx{S}_{\JR}+c)$ takes one of two nontrivial nonconvex forms (the affine map $D$ preserves the shape of $\relx{S}_{\JR}$):
\begin{itemize}
  \item $\relx{S}_{\JR}=\{(\relx{y}_1,\relx{y}_2): \relx{y}_1 = \relx{y}_2^2/\Delta\}$, a parabola (boundary case);
  \item $\relx{S}_{\JR}=\{(\relx{y}_1,\relx{y}_2): \relx{y}_1 \ge \relx{y}_2^2/\Delta \}$, a solid parabolic region (solid case).
\end{itemize}
Define the bowl function
\begin{equation}
  \label{eq:q-def}
  q(\relx{y}) \;\doteq\; \relx{y}_1 - \frac{\relx{y}_2^2}{\Delta},
\end{equation}
and the parabolic bowl $C_{\Par} \doteq \{\relx{y} \in \bR^2 : q(\relx{y}) \le 0\}$ with boundary $\bd(C_{\Par}) = \{q(\relx{y}) = 0\}$. Then $\relx{S}_{\JR}$ is either the boundary $\bd(C_{\Par})$ or the reverse-convex set $\clos(C_{\Par}^c)$.

To pass between canonical and original coordinates, define the affine map
\begin{equation}
  T:\bR^2 \to \bR^2, \qquad \relx{y} \mapsto T(\relx{y}) \doteq D(\relx{y} + c).
\end{equation}
Since $S_{\JR} = T(\relx{S}_{\JR})$, we work in canonical coordinates $\relx{y} = (\relx{y}_1, \relx{y}_2)$ where the parabola takes its standard form. The inverse is
\begin{equation}
  \label{eq:coord-transform}
  \relx{y} = T^{-1}(y) = D^{-1}y - c,
\end{equation}
under which $y \in S_{\JR}$ if and only if $\relx{y} \in \relx{S}_{\JR}$. The simplicial cone $K_{\JR}$ transforms to the simplicial cone
\begin{equation}
  \label{eq:transformed-cone}
  \relx{K}_{\JR} \doteq T^{-1}(K_{\JR}) = \{\relx{y} \in \bR^2 : \relx{\phi}_1 + \relx{r}_1^\top \relx{y} \ge 0,\; \relx{\phi}_2 + \relx{r}_2^\top \relx{y} \ge 0\},
\end{equation}
where $\relx{r}_i \doteq D^\top e_i$ and
$\relx{\phi}_i \doteq \phi_i + e_i^\top Dc$. Since $T$ is an
affine bijection, the joint-range set $Y_{\JR} = K_{\JR} \cap S_{\JR}$ maps to the corresponding joint-range set
$\relx{Y}_{\JR} = \relx{K}_{\JR} \cap \relx{S}_{\JR}$, and it suffices to characterize the corresponding joint-range hull
$\relx{H}_{\JR} \doteq \clos(\conv(\relx{Y}_{\JR}))$ in canonical coordinates due to the following equivalence:
\[
  H_{\JR}
  = \clos\!\bigl(\conv(Y_{\JR})\bigr)
  = T(\clos(\conv(\relx{Y}_{\JR})))
  = T(\relx{H}_{\JR}).
\]

Recall that $D=[\hat d,\hat d_\perp]$, where $\hat d$ is normalized and
$\hat d_\perp=(-\hat d_2,\hat d_1)$; hence the columns of $D$ are orthonormal
and $D^{-1}=D^\top$. Let
$\relx{\phi}\doteq(\relx{\phi}_1,\relx{\phi}_2)^\top$. The apex
$\relx{y}'$ of the simplicial cone $\relx{K}_{\JR}$ is the unique point where both inequalities in
\eqref{eq:transformed-cone} are tight, i.e.,
\[
  D\relx{y}'=-\relx{\phi},
  \qquad\text{equivalently}\qquad
  \relx{y}'=-D^\top\relx{\phi}.
\]
Thus the simplicial cone~\eqref{eq:transformed-cone} can be rewritten as
$\relx{K}_{\JR} = \{\relx{y}\in\bR^2 : D(\relx{y}-\relx{y}') \ge 0\}$. Since
$D$ is orthogonal, the vectors $\relx{r}_1,\relx{r}_2$ are orthonormal; the
same vectors are therefore the extreme-ray directions, giving
\begin{equation*}
  \relx{K}_{\JR} = \bigl\{\, \relx{y}' + \relx{\eta}_1 \relx{r}_1 + \relx{\eta}_2 \relx{r}_2 : \relx{\eta}\in\bR^2_+ \,\bigr\},
\end{equation*}
with $i$-th boundary ray $\relx{R}_i \doteq \{\relx{y}' + \lambda \relx{r}_i : \lambda \ge 0\}$.

The sign of $q(\relx{y}')$ classifies the apex position relative to $C_{\Par}$: interior ($q(\relx{y}') < 0$), boundary ($q(\relx{y}') = 0$), or exterior ($q(\relx{y}') > 0$).
Along each boundary ray $\relx{R}_i$, the bowl function restricts to the quadratic (cf.~\eqref{eq:q-def})
\begin{equation}
  \label{eq:q-along-ray}
  h_i(\lambda) \;\doteq\; q(\relx{y}' + \lambda \relx{r}_i)
  \;=\; \nu_i \lambda^2 + \chi_i \lambda + q(\relx{y}'), \qquad \lambda \ge 0,
\end{equation}
where
\begin{equation}
  \label{eq:ray-coefficients}
  \nu_i \;\doteq\; -\relx{r}_{i2}^2/\Delta \ge 0, \qquad
  \chi_i  \;\doteq\; \relx{r}_{i1} - \frac{2\relx{y}'_2}{\Delta}\,\relx{r}_{i2},
\end{equation}
with discriminant $\kappa_i \doteq \chi_i^2 - 4\nu_i\,q(\relx{y}')$. The intersections of $\relx{R}_i$ with $\bd(C_{\Par})$ are exactly the nonneg\-ative roots of $h_i(\lambda) = 0$; real roots exist if and only if $\kappa_i \ge 0$.
A ray $\relx{R}_i$ is a \emph{recession ray} if its direction is a nonzero recession direction of $C_{\Par}$:  $\relx{r}_{i2}=0$ and $\relx{r}_{i1}<0$,  equivalently, $\nu_i=0$ and $\chi_i<0$. Since $\relx{r}_1,\relx{r}_2$ are orthonormal, at most one ray can be a recession ray.

\subsubsection{Joint-Range Hull Characterization}

We first consider the trivial case where $\relx{K}_{\JR} \subseteq \clos(C_{\Par}^c)$, i.e., $\relx{K}_{\JR}$ is entirely outside $\inter(C_{\Par})$, shown in Figure \ref{fig:assumption-cases}.

\begin{theorem}[Trivial case]
  \label{thm:trivial-case}
  Assume that the simplicial cone $\relx{K}_{\JR}\subseteq\clos(C_{\Par}^c)$.  If $\relx{S}_{\JR}=\bd(C_{\Par})$, then the joint-range set $\relx{Y}_{\JR}$ is empty or a singleton.  If $\relx{S}_{\JR}=\clos(C_{\Par}^c)$, then $\relx{Y}_{\JR}=\relx{K}_{\JR}$.
\end{theorem}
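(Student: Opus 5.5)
The second claim is immediate. If $\relx{S}_{\JR}=\clos(C_{\Par}^c)$ and $\relx{K}_{\JR}\subseteq\clos(C_{\Par}^c)$, then
\[
\relx{Y}_{\JR}=\relx{K}_{\JR}\cap\clos(C_{\Par}^c)=\relx{K}_{\JR}.
\]
All the work is in the boundary case $\relx{S}_{\JR}=\bd(C_{\Par})$. There we must show that a two-dimensional cone lying outside the open bowl $\inter(C_{\Par})=\{q<0\}$ meets the parabola $\{q=0\}$ in at most one point.

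The plan is to use convexity of $C_{\Par}$ together with the geometry of the cone. Suppose, for contradiction, that two distinct points $u,v\in\relx{K}_{\JR}\cap\bd(C_{\Par})$ exist. Since $\Delta<0$, the function $q(\relx{y})=\relx{y}_1-\relx{y}_2^2/\Delta=\relx{y}_1+\relx{y}_2^2/|\Delta|$ is strictly convex in $\relx{y}_2$ and affine in $\relx{y}_1$.

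\textbf{Case $u_2\neq v_2$.} Strict convexity along the segment gives $q\bigl((u+v)/2\bigr)<\tfrac12\bigl(q(u)+q(v)\bigr)=0$. The midpoint lies in $\relx{K}_{\JR}$ because the cone is convex, so $\relx{K}_{\JR}\cap\inter(C_{\Par})\neq\varnothing$. This contradicts the assumption.

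\textbf{Case $u_2=v_2$.} Then $q(u)=q(v)=0$ forces $u_1=v_1$, so $u=v$, again a contradiction.

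Hence $|\relx{Y}_{\JR}|\le 1$, so $\relx{Y}_{\JR}$ is empty or a singleton.

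I expect no real obstacle. The one point that needs care is strict convexity: $q$ is not strictly convex in the $\relx{y}_1$ direction, which is exactly why the two cases are separated. One could also remark why $\relx{K}_{\JR}$ cannot contain a whole recession direction of $C_{\Par}$ while staying outside its interior. That fact is not needed, however, since the midpoint argument already handles any two points.
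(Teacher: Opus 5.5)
Your proof is correct and takes essentially the same approach as the paper. The solid case is immediate, and the boundary case rests on strict convexity of $C_{\Par}$. The paper invokes that in one line; you make it explicit via the midpoint computation for $q$, correctly splitting off the case $u_2=v_2$, where $q$ is not strictly convex.
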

\begin{proof}
  The solid case is immediate from $\relx{K}_{\JR}\subseteq\clos(C_{\Par}^c)$.  In the boundary case, strict convexity of $C_{\Par}$ implies that a convex set contained in $\clos(C_{\Par}^c)$ can meet $\bd(C_{\Par})$ in at most one point.
\end{proof}

\begin{figure}[!htbp]
  \centering
  \resizebox{0.4\textwidth}{!}{
\begin{tikzpicture}[>=Stealth, scale=0.95, font=\small]
  \def\xmin{-3.5}\def\xmax{1.1}\def\ymin{-2.6}\def\ymax{2.6}\def\yclip{2.6}
  \coordinate (A) at (0.6,1.4);
  \coordinate (P1) at (1.1,-0.466025);
  \coordinate (P2) at (1.1,1.533975);
  \begin{scope}
    \clip (\xmin,\ymin) rectangle (\xmax,\ymax);
    \fill[gray!20] (\xmin,\ymin) -- (\xmin,\ymax)
      -- plot[variable=\t, domain=\yclip:-\yclip, samples=120] ({-0.45*\t*\t},{\t}) -- cycle;
    \fill[blue!12] (0.6,1.4) -- (1.1,-0.466025) -- (1.1,1.533975) -- cycle;
  \end{scope}
  \draw[gray!50, thin, ->] (\xmin,0) -- (\xmax,0) node[right, gray!60]{\scriptsize$\relx{y}_1$};
  \draw[gray!50, thin, ->] (0,\ymin) -- (0,\ymax) node[above, gray!60]{\scriptsize$\relx{y}_2$};
  \node[gray!60, font=\scriptsize] at (0.12,-0.18) {$0$};
  \draw[gray!70, thick] plot[variable=\t, domain=-\yclip:\yclip, samples=120] ({-0.45*\t*\t},{\t});
  \node[gray!70, font=\footnotesize] at (-2.4,1.65) {$C_{\Par}$};
  \draw[blue!70!black, thick] (A) -- (P1);
  \draw[blue!70!black, thick] (A) -- (P2);
  \fill[blue!70!black] (A) circle (2pt) node[above left, font=\footnotesize] {$\relx{y}'$};
  \node[blue!60!black, font=\footnotesize] at (0.766667,1.111325) {$\relx{K}_{\JR}$};
\end{tikzpicture}}
  \caption{Representative trivial configuration: $\relx{K}_{\JR}\subseteq \clos(C_{\Par}^c)$. The parabolic bowl $C_{\Par}$ is left of the parabola and the cone $\relx{K}_{\JR}$ is right of it.}
  \label{fig:assumption-cases}
\end{figure}

We next consider a special case  where $C_{\Par} \subseteq \relx{K}_{\JR}$, which is illustrated in Figure \ref{fig:containment-case}.

\begin{figure}[!htbp]
  \centering
  \resizebox{0.4\textwidth}{!}{
\begin{tikzpicture}[>=Stealth, scale=0.95, font=\small]
  \def\xmin{-3.5}\def\xmax{1.1}\def\ymin{-2.6}\def\ymax{2.6}\def\yclip{2.6}
  \coordinate (A) at (0.65,0);
  \coordinate (P1) at (-1.531659,2.6);
  \coordinate (P2) at (-1.531659,-2.6);
  \begin{scope}
    \clip (\xmin,\ymin) rectangle (\xmax,\ymax);
    \fill[gray!20] (\xmin,\ymin) -- (\xmin,\ymax)
      -- plot[variable=\t, domain=\yclip:-\yclip, samples=120] ({-0.45*\t*\t},{\t}) -- cycle;
    \fill[blue!12] (-3.5,-2.6) -- (-1.531659,-2.6) -- (0.65,0) -- (-1.531659,2.6) -- (-3.5,2.6) -- cycle;
  \end{scope}
  \draw[gray!50, thin, ->] (\xmin,0) -- (\xmax,0) node[right, gray!60]{\scriptsize$\relx{y}_1$};
  \draw[gray!50, thin, ->] (0,\ymin) -- (0,\ymax) node[above, gray!60]{\scriptsize$\relx{y}_2$};
  \node[gray!60, font=\scriptsize] at (0.12,-0.18) {$0$};
  \draw[gray!70, thick] plot[variable=\t, domain=-\yclip:\yclip, samples=120] ({-0.45*\t*\t},{\t});
  \node[gray!70, font=\footnotesize] at (-2.4,1.65) {$C_{\Par}$};
  \draw[blue!70!black, thick] (A) -- (P1);
  \draw[blue!70!black, thick] (A) -- (P2);
  \fill[blue!70!black] (A) circle (2pt) node[above right, font=\footnotesize] {$\relx{y}'$};
  \node[blue!60!black, font=\footnotesize] at (-0.616332,0) {$\relx{K}_{\JR}$};
\end{tikzpicture}}
  \caption{Representative configuration for the case $C_{\Par} \subseteq \relx{K}_{\JR}$.}
  \label{fig:containment-case}
\end{figure}

\begin{theorem}[Containment case]
  \label{thm:conv-hull-contained-bowl}
  Assume that \(C_{\Par}\subseteq \relx{K}_{\JR}\).
  \begin{itemize}
    \item If \(\relx{S}_{\JR}=\bd(C_{\Par})\), then $
            \clos(\conv(\relx{Y}_{\JR}))=C_{\Par}.
          $
    \item If \(\relx{S}_{\JR}=\clos(C_{\Par}^c)\), then
          $ \clos(\conv(\relx{Y}_{\JR}))=\relx{K}_{\JR}. $
  \end{itemize}
\end{theorem}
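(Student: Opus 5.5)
In both cases I will prove two inclusions. One inclusion comes from the containment hypothesis. The other comes from showing that each point of the target set is a limit of convex combinations of points of $\relx{Y}_{\JR}$.

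\emph{Boundary case} $\relx{S}_{\JR}=\bd(C_{\Par})$. Since $C_{\Par}\subseteq\relx{K}_{\JR}$, we have $\relx{Y}_{\JR}=\relx{K}_{\JR}\cap\bd(C_{\Par})=\bd(C_{\Par})$. The inclusion $\clos(\conv(\bd(C_{\Par})))\subseteq C_{\Par}$ holds because $C_{\Par}$ is closed and convex: $q$ is convex, as $-\relx{y}_2^2/\Delta$ is convex for $\Delta<0$.

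For the reverse inclusion, take $\relx{y}\in C_{\Par}$. The horizontal line through $\relx{y}$ meets $\bd(C_{\Par})$ only at $(\relx{y}_2^2/\Delta,\relx{y}_2)$, so I use a vertical segment instead. The line $\{\relx{y}_1=\text{const}\}$ meets $C_{\Par}$ in the interval $\{\relx{y}_2:\relx{y}_2^2\le \Delta\relx{y}_1\}$. For $\relx{y}_1\le 0$ this interval is $[-\sqrt{\Delta\relx{y}_1},\sqrt{\Delta\relx{y}_1}]$, and both endpoints lie on $\bd(C_{\Par})$. Hence $\relx{y}$ is a convex combination of two boundary points. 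Since $q(\relx{y})\le 0$ already forces $\relx{y}_1\le \relx{y}_2^2/\Delta\le 0$, no closure is needed here. This gives $\conv(\relx{Y}_{\JR})=C_{\Par}$, and the boundary case is done.

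\emph{Solid case} $\relx{S}_{\JR}=\clos(C_{\Par}^c)$. Here $\relx{Y}_{\JR}\subseteq\relx{K}_{\JR}$, and $\relx{K}_{\JR}$ is closed and convex, which gives one inclusion. For the other, write $\relx{K}_{\JR}=(\relx{K}_{\JR}\setminus\inter(C_{\Par}))\cup C_{\Par}$. The first piece is contained in $\relx{Y}_{\JR}$. The second piece satisfies $C_{\Par}=\conv(\bd(C_{\Par}))$ by the boundary case, and $\bd(C_{\Par})\subseteq\relx{Y}_{\JR}$ because $\bd(C_{\Par})\subseteq C_{\Par}\subseteq\relx{K}_{\JR}$ and $\bd(C_{\Par})\subseteq\clos(C_{\Par}^c)$. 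Therefore $\relx{K}_{\JR}\subseteq\conv(\relx{Y}_{\JR})$, and equality holds even without taking closure.

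The only step needing care is the reverse inclusion in the boundary case, namely the choice of the segment. I expect it to go through with the vertical chord described above. The key fact there is that every point of $C_{\Par}$ has $\relx{y}_1\le 0$, so the chord's endpoints exist in the canonical coordinates where $\Delta<0$. I will also note that the argument never uses the fine structure of the cone beyond the inclusion $C_{\Par}\subseteq\relx{K}_{\JR}$.
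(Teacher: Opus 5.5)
Your proof is correct and takes the same route as the paper. Both arguments show $\bd(C_{\Par})\subseteq\relx{Y}_{\JR}$, use $\conv(\bd(C_{\Par}))=C_{\Par}$, and in the solid case write $\relx{K}_{\JR}$ as the union of $\relx{Y}_{\JR}$ and $C_{\Par}$. The one difference is that you justify $\conv(\bd(C_{\Par}))=C_{\Par}$ via vertical chords, which the paper only asserts, and you note that the closure is not needed.
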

\begin{proof}
  Under the assumption $C_{\Par} \subseteq \relx{K}_{\JR}$, the boundary $\bd(C_{\Par})$ is contained in $\relx{K}_{\JR}$, thus the condition $\relx{K}_{\JR} \not\subseteq \clos(C_{\Par}^c)$ is satisfied. Since $\bd(C_{\Par})$ is also contained in $\relx{S}_{\JR}$ for both the parabola and solid region cases, we have $\bd(C_{\Par}) \subseteq \relx{Y}_{\JR}$. Consequently, the joint-range hull $\clos(\conv(\relx{Y}_{\JR})) \supseteq \conv(\bd(C_{\Par})) = C_{\Par}$.

  If $\relx{S}_{\JR} = \bd(C_{\Par})$, then the joint-range set $\relx{Y}_{\JR} = \bd(C_{\Par})$ and the joint-range hull $\clos(\conv(\relx{Y}_{\JR})) = C_{\Par}$. If $\relx{S}_{\JR}$ is the exterior region, then $\relx{K}_{\JR} = \relx{Y}_{\JR} \cup C_{\Par}$. Since $C_{\Par} \subseteq \clos(\conv(\relx{Y}_{\JR}))$ and $\relx{Y}_{\JR} \subseteq \clos(\conv(\relx{Y}_{\JR}))$, their union $\relx{K}_{\JR}$ is contained in $\clos(\conv(\relx{Y}_{\JR}))$, and the reverse inclusion is trivial.
\end{proof}

We next consider the cases where the apex is not in the interior of the parabolic bowl $C_{\Par}$ (illustrated by \Cref{fig:intersection-cases-outside}). We need the following useful lemma.

  \begin{lemma}
    \label{lem:recession-limit}
    Let $S\subseteq\bR^2$ contain a sequence $(p_k)$ with $\|p_k\|\to\infty$ and
    $p_k/\|p_k\|\to d$. Then $z+sd\in\clos(\conv(S))$ for every
    $z\in\clos(\conv(S))$ and every $s\ge0$.
  \end{lemma}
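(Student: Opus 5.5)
Fix $z\in\clos(\conv(S))$ and $s\ge 0$. The plan is to approximate $z+sd$ by convex combinations of a point of $\conv(S)$ near $z$ and one far point $p_k$ of $S$. The $p_k$-weight is chosen so that it tends to zero while its product with $\|p_k\|$ stays equal to $s$.

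First reduce to $z\in\conv(S)$. Assume the claim holds for all points of $\conv(S)$. For a general $z$, pick $z_j\in\conv(S)$ with $z_j\to z$. Then $z_j+sd\in\clos(\conv(S))$ and $z_j+sd\to z+sd$, and closedness gives $z+sd\in\clos(\conv(S))$. The case $s=0$ is trivial, so take $s>0$.

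Now let $z\in\conv(S)$ and $s>0$. Since $\|p_k\|\to\infty$, for all large $k$ we have $\|p_k\|>s$. Set $\lambda_k\doteq s/\|p_k\|\in(0,1)$ and
\[
  w_k \doteq (1-\lambda_k)z+\lambda_k p_k \in \conv(S),
\]
which lies in $\conv(S)$ because $z\in\conv(S)$ and $p_k\in S$. Expanding gives
\[
  w_k = z - \lambda_k z + s\,\frac{p_k}{\|p_k\|}.
\]
As $k\to\infty$ we have $\lambda_k\to 0$, so $\lambda_k z\to 0$, and $p_k/\|p_k\|\to d$ by hypothesis. Hence $w_k\to z+sd$, and therefore $z+sd\in\clos(\conv(S))$.

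There is no serious obstacle. The only point needing care is that $z$ must first be taken in $\conv(S)$ itself, so that each $w_k$ is a genuine convex combination; the density reduction above handles the closure. The argument never uses that the ambient space is $\bR^2$.
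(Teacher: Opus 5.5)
Your proof is correct and is essentially the paper's argument: the paper also sets $\lambda_k = s/\|p_k\|$ and shows $(1-\lambda_k)z+\lambda_k p_k \to z+sd$. The only difference is that the paper applies this directly to $z\in\clos(\conv(S))$, using that $\clos(\conv(S))$ is itself convex, so your preliminary reduction to $z\in\conv(S)$ is valid but not needed.
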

  \begin{proof}
    Let $\lambda_k\doteq s/\|p_k\|$, so that $\lambda_k\in[0,1]$ for all large
    $k$. The points $(1-\lambda_k)z+\lambda_k p_k$ lie in $\clos(\conv(S))$, and
    \[
      (1-\lambda_k)z+\lambda_k p_k
      = z+s\,\frac{p_k-z}{\|p_k\|}
      \;\longrightarrow\; z+sd,
    \]
    so the claim follows from the closedness of $\clos(\conv(S))$.
  \end{proof}

Then, we have the following result.

\begin{figure}[!htbp]
  \centering
  \begin{subfigure}[t]{0.48\textwidth}
    \centering
    \resizebox{\linewidth}{!}{
\begin{tikzpicture}[>=Stealth, scale=0.95, font=\small]
  \def\xmin{-3.5}\def\xmax{1.1}\def\ymin{-2.6}\def\ymax{2.6}\def\yclip{2.6}
  \coordinate (A) at (0.25,0.3);
  \coordinate (P1) at (-2.802203,2.6);
  \coordinate (P2) at (-1.935307,-2.6);
  \begin{scope}
    \clip (\xmin,\ymin) rectangle (\xmax,\ymax);
    \fill[gray!20] (\xmin,\ymin) -- (\xmin,\ymax)
      -- plot[variable=\t, domain=\yclip:-\yclip, samples=120] ({-0.45*\t*\t},{\t}) -- cycle;
    \fill[blue!12] (-3.5,-2.6) -- (-1.935307,-2.6) -- (0.25,0.3) -- (-2.802203,2.6) -- (-3.5,2.6) -- cycle;
  \end{scope}
  \draw[gray!50, thin, ->] (\xmin,0) -- (\xmax,0) node[right, gray!60]{\scriptsize$\relx{y}_1$};
  \draw[gray!50, thin, ->] (0,\ymin) -- (0,\ymax) node[above, gray!60]{\scriptsize$\relx{y}_2$};
  \node[gray!60, font=\scriptsize] at (0.12,-0.18) {$0$};
  \draw[gray!70, thick] plot[variable=\t, domain=-\yclip:\yclip, samples=120] ({-0.45*\t*\t},{\t});
  \draw[blue!70!black, thick] (A) -- (P1);
  \draw[blue!70!black, thick] (A) -- (P2);
  \fill[blue!70!black] (A) circle (2pt) node[above right, font=\footnotesize] {$\relx{y}'$};
  \node[blue!60!black, font=\footnotesize] at (-1.023751,0.18) {$\relx{K}_{\JR}$};
\end{tikzpicture}}
    \caption{No recession ray; both rays intersect.}
    \label{fig:intersection-cases-outside-both}
  \end{subfigure}\hfill
  \begin{subfigure}[t]{0.48\textwidth}
    \centering
    \resizebox{\linewidth}{!}{
\begin{tikzpicture}[>=Stealth, scale=0.95, font=\small]
  \def\xmin{-3.5}\def\xmax{1.1}\def\ymin{-2.6}\def\ymax{2.6}\def\yclip{2.6}
  \coordinate (A) at (-0.3,-1.4);
  \coordinate (P1) at (-0.3,2.6);
  \coordinate (P2) at (-3.5,-1.4);
  \begin{scope}
    \clip (\xmin,\ymin) rectangle (\xmax,\ymax);
    \fill[gray!20] (\xmin,\ymin) -- (\xmin,\ymax)
      -- plot[variable=\t, domain=\yclip:-\yclip, samples=120] ({-0.45*\t*\t},{\t}) -- cycle;
    \fill[blue!12] (-3.5,-1.4) -- (-0.3,-1.4) -- (-0.3,2.6) -- (-3.5,2.6) -- cycle;
  \end{scope}
  \draw[gray!50, thin, ->] (\xmin,0) -- (\xmax,0) node[right, gray!60]{\scriptsize$\relx{y}_1$};
  \draw[gray!50, thin, ->] (0,\ymin) -- (0,\ymax) node[above, gray!60]{\scriptsize$\relx{y}_2$};
  \node[gray!60, font=\scriptsize] at (0.12,-0.18) {$0$};
  \draw[gray!70, thick] plot[variable=\t, domain=-\yclip:\yclip, samples=120] ({-0.45*\t*\t},{\t});
  \draw[blue!70!black, thick] (A) -- (P1);
  \draw[blue!70!black, thick] (A) -- (P2);
  \fill[blue!70!black] (A) circle (2pt) node[above right, font=\footnotesize] {$\relx{y}'$};
  \node[blue!60!black, font=\footnotesize] at (-1.1,-0.4) {$\relx{K}_{\JR}$};
\end{tikzpicture}}
    \caption{Recession ray and one additional intersection.}
    \label{fig:intersection-cases-outside-recession-plus}
  \end{subfigure}

  \medskip
  \begin{subfigure}[t]{0.48\textwidth}
    \centering
    \resizebox{\linewidth}{!}{
\begin{tikzpicture}[>=Stealth, scale=0.95, font=\small]
  \def\xmin{-3.5}\def\xmax{1.1}\def\ymin{-2.6}\def\ymax{2.6}\def\yclip{2.6}
  \coordinate (A) at (-1,2);
  \coordinate (P1) at (-1.811104,-2.6);
  \coordinate (P2) at (1.1,1.629713);
  \begin{scope}
    \clip (\xmin,\ymin) rectangle (\xmax,\ymax);
    \fill[gray!20] (\xmin,\ymin) -- (\xmin,\ymax)
      -- plot[variable=\t, domain=\yclip:-\yclip, samples=120] ({-0.45*\t*\t},{\t}) -- cycle;
    \fill[blue!12] (-1,2) -- (-1.811104,-2.6) -- (1.1,-2.6) -- (1.1,1.629713) -- cycle;
  \end{scope}
  \draw[gray!50, thin, ->] (\xmin,0) -- (\xmax,0) node[right, gray!60]{\scriptsize$\relx{y}_1$};
  \draw[gray!50, thin, ->] (0,\ymin) -- (0,\ymax) node[above, gray!60]{\scriptsize$\relx{y}_2$};
  \node[gray!60, font=\scriptsize] at (0.12,-0.18) {$0$};
  \draw[gray!70, thick] plot[variable=\t, domain=-\yclip:\yclip, samples=120] ({-0.45*\t*\t},{\t});
  \draw[blue!70!black, thick] (A) -- (P1);
  \draw[blue!70!black, thick] (A) -- (P2);
  \fill[blue!70!black] (A) circle (2pt) node[above left, font=\footnotesize] {$\relx{y}'$};
  \node[blue!60!black, font=\footnotesize] at (-0.576388,0.803714) {$\relx{K}_{\JR}$};
\end{tikzpicture}}
    \caption{Two intersections on one ray, none on the other.}
    \label{fig:intersection-cases-outside-two-one-ray}
  \end{subfigure}\hfill
  \begin{subfigure}[t]{0.48\textwidth}
    \centering
    \resizebox{\linewidth}{!}{
\begin{tikzpicture}[>=Stealth, scale=0.95, font=\small]
  \def\xmin{-3.5}\def\xmax{1.1}\def\ymin{-2.6}\def\ymax{2.6}\def\yclip{2.6}
  \coordinate (A) at (0.25,1);
  \coordinate (P1) at (-3.5,1);
  \coordinate (P2) at (0.25,-2.6);
  \begin{scope}
    \clip (\xmin,\ymin) rectangle (\xmax,\ymax);
    \fill[gray!20] (\xmin,\ymin) -- (\xmin,\ymax)
      -- plot[variable=\t, domain=\yclip:-\yclip, samples=120] ({-0.45*\t*\t},{\t}) -- cycle;
    \fill[blue!12] (0.25,1) -- (-3.5,1) -- (-3.5,-2.6) -- (0.25,-2.6) -- cycle;
  \end{scope}
  \draw[gray!50, thin, ->] (\xmin,0) -- (\xmax,0) node[right, gray!60]{\scriptsize$\relx{y}_1$};
  \draw[gray!50, thin, ->] (0,\ymin) -- (0,\ymax) node[above, gray!60]{\scriptsize$\relx{y}_2$};
  \node[gray!60, font=\scriptsize] at (0.12,-0.18) {$0$};
  \draw[gray!70, thick] plot[variable=\t, domain=-\yclip:\yclip, samples=120] ({-0.45*\t*\t},{\t});
  \draw[blue!70!black, thick] (A) -- (P1);
  \draw[blue!70!black, thick] (A) -- (P2);
  \fill[blue!70!black] (A) circle (2pt) node[below right, font=\footnotesize] {$\relx{y}'$};
  \node[blue!60!black, font=\footnotesize] at (-0.6875,0.1) {$\relx{K}_{\JR}$};
\end{tikzpicture}}
    \caption{One recession-ray intersection only.}
    \label{fig:intersection-cases-outside-recession-only}
  \end{subfigure}
  \caption{Representative configurations for the outside-apex cases ($q(\relx{y}') > 0$).}
  \label{fig:intersection-cases-outside}
\end{figure}

\begin{theorem}[Non-interior-apex case]
  \label{thm:closure-conv-diff}
  Assume that $\relx{K}_{\JR} \not\subseteq \clos(C_{\Par}^c)$, \(C_{\Par}\not\subseteq \relx{K}_{\JR}\), and $q(\relx{y}')\ge 0$. Then
  \begin{itemize}
    \item If $\relx{S}_{\JR}=\bd(C_{\Par})$, then $\clos(\conv(\relx{Y}_{\JR}))= \relx{K}_{\JR}\cap C_{\Par}$;
    \item  If $\relx{S}_{\JR}=\clos(C_{\Par}^c)$, then $\clos(\conv(\relx{Y}_{\JR}))= \relx{K}_{\JR}$.
  \end{itemize}
\end{theorem}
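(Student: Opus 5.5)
The plan is to handle the boundary case $\relx{S}_{\JR}=\bd(C_{\Par})$ first and then derive the solid case from it. In the boundary case we have $\relx{Y}_{\JR}=\relx{K}_{\JR}\cap\bd(C_{\Par})$. The inclusion $\clos(\conv(\relx{Y}_{\JR}))\subseteq \relx{K}_{\JR}\cap C_{\Par}$ is immediate: the right-hand side is closed and convex, being an intersection of two closed convex sets, and it contains $\relx{Y}_{\JR}$. So the work lies in the reverse inclusion. We must show that every $z\in \relx{K}_{\JR}\cap C_{\Par}$ lies in $\clos(\conv(\relx{K}_{\JR}\cap\bd(C_{\Par})))$.

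\emph{Reverse inclusion.} Fix $z\in\relx{K}_{\JR}\cap C_{\Par}$. If $z=\relx{y}'$, then $q(\relx{y}')\le 0$ together with $q(\relx{y}')\ge 0$ gives $z\in\bd(C_{\Par})\cap\relx{K}_{\JR}$, and we are done. Otherwise, consider the line $L$ through $\relx{y}'$ and $z$. Since $\relx{K}_{\JR}$ is a cone with apex $\relx{y}'$, the ray $\{\relx{y}'+t(z-\relx{y}'):t\ge0\}$ lies in $\relx{K}_{\JR}$. The set $L\cap C_{\Par}$ is a closed interval containing $z$. Because $q(\relx{y}')\ge0$, the apex $\relx{y}'$ is not in $\inter(C_{\Par})$. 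Hence either this interval avoids $\relx{y}'$, or $\relx{y}'$ is one of its endpoints, since an interior point of the interval would be an interior point of $C_{\Par}$ (by strict convexity of $C_{\Par}$ when $L$ is not horizontal; if $L$ is horizontal, an interior point of the interval is an interior point of $C_{\Par}$ anyway). In either case the whole interval lies on the ray side, so $L\cap C_{\Par}\subseteq\relx{K}_{\JR}$. Its finite endpoints belong to $\bd(C_{\Par})\cap\relx{K}_{\JR}=\relx{Y}_{\JR}$.
\begin{itemize}
\item If the interval is bounded, $z$ is a convex combination of its two endpoints.
\item If it is unbounded, its direction must be the recession direction $(-1,0)$ of $C_{\Par}$. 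Then $z$ equals the finite endpoint $p\in\relx{Y}_{\JR}$ plus a nonnegative multiple of $(-1,0)$. Here we invoke \Cref{lem:recession-limit} with $S=\relx{Y}_{\JR}$. For this we need a sequence $p_k\in\relx{K}_{\JR}\cap\bd(C_{\Par})$ with $\|p_k\|\to\infty$ and $p_k/\|p_k\|\to(-1,0)$.
\end{itemize}

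\emph{Main obstacle: the recession sequence.} Producing this sequence is the step I expect to be the delicate one. The parabola points $(s^2/\Delta,s)$ with $|s|\to\infty$ have normalized direction tending to $(-1,0)$. The first sub-case is when $(-1,0)$ lies in the interior of the recession cone of $\relx{K}_{\JR}$; then such points eventually lie in $\relx{K}_{\JR}$. The second sub-case is when $(-1,0)$ spans a boundary ray, i.e.\ a recession ray exists. Then one writes the parabola points as $\relx{y}'+\sigma(-1,0)+\tau e_2$ with $|\tau|=O(\sqrt{\sigma})$, and chooses the branch ($s\to+\infty$ or $s\to-\infty$) whose perpendicular offset has the sign pointing into $\relx{K}_{\JR}$. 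That branch is eventually inside $\relx{K}_{\JR}$. Existence of such $z$ guarantees that $\relx{K}_{\JR}$ extends in direction $(-1,0)$, so one of these sub-cases applies. With the sequence in hand, \Cref{lem:recession-limit} gives $z=p+s(-1,0)\in\clos(\conv(\relx{Y}_{\JR}))$, which completes the boundary case.

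\emph{Solid case.} Now let $\relx{S}_{\JR}=\clos(C_{\Par}^c)$. Then $\relx{Y}_{\JR}=\relx{K}_{\JR}\setminus\inter(C_{\Par})$, and this set contains $\relx{K}_{\JR}\cap\bd(C_{\Par})$. By the boundary case just proved, $\relx{K}_{\JR}\cap C_{\Par}\subseteq\clos(\conv(\relx{Y}_{\JR}))$. Since $\relx{K}_{\JR}=\relx{Y}_{\JR}\cup(\relx{K}_{\JR}\cap C_{\Par})$, this yields $\relx{K}_{\JR}\subseteq\clos(\conv(\relx{Y}_{\JR}))$. The reverse inclusion holds because $\relx{K}_{\JR}$ is closed and convex. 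This is the same argument as in \Cref{thm:conv-hull-contained-bowl}. The hypotheses $\relx{K}_{\JR}\not\subseteq\clos(C_{\Par}^c)$ and $C_{\Par}\not\subseteq\relx{K}_{\JR}$ only distinguish this case from \Cref{thm:trivial-case,thm:conv-hull-contained-bowl}; the argument uses $q(\relx{y}')\ge0$ essentially.
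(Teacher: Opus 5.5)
Your proof is correct, but it reaches the boundary case by a different route than the paper.

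The paper works from the boundary of $H_{\rm loc}=\relx{K}_{\JR}\cap C_{\Par}$ inward. It shows that $\bd(H_{\rm loc})$ consists of parabolic arcs together with the pieces $\relx{R}_i\cap C_{\Par}$ of the cone's boundary rays, and places each piece in the closed convex hull:
\begin{itemize}
\item bounded segments are handled via their endpoints;
\item a recession ray is handled via an explicit parametrization $p(t)$ of the parabola starting at $\relx{v}$, where $q(\relx{y}')\ge0$ certifies $p(t)\in\relx{K}_{\JR}$ for $t\ge2|\relx{v}_2|$, followed by \Cref{lem:recession-limit}.
\end{itemize}
It then recovers interior points along a chord in a direction $w$ such that neither $w$ nor $-w$ is a recession direction.

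You instead use chords through the apex. Because $q(\relx{y}')\ge0$ and $\bd(C_{\Par})$ contains no segments, $\relx{y}'$ cannot lie strictly inside $L\cap C_{\Par}$. So the whole chord sits in $\relx{K}_{\JR}$, and its finite endpoints are already in $\relx{Y}_{\JR}$. What your route buys:
\begin{itemize}
\item boundary and interior points are treated uniformly;
\item it pinpoints exactly where $q(\relx{y}')\ge0$ is needed;
\item it shows that the other two hypotheses play no role in the identity.
\end{itemize}

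The price is that your unbounded (horizontal) chords force a recession sequence in two sub-cases: $(-1,0)$ interior to $\operatorname{rec}(\relx{K}_{\JR})$, or a recession ray. The paper needs it only for a recession ray, since its interior chords avoid recession directions. On the other hand, your asymptotic membership check is lighter than the paper's explicit threshold.

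For the solid case, you reuse the boundary identity with $\relx{K}_{\JR}=\relx{Y}_{\JR}\cup(\relx{K}_{\JR}\cap C_{\Par})$, as in \Cref{thm:conv-hull-contained-bowl}. The paper instead argues that both boundary rays lie in the hull and hence generate the cone. Both work, and yours is shorter.
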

\begin{proof}
  We first prove that $\clos(\conv(\bd(C_{\Par}) \cap \relx{K}_{\JR})) = H_{\rm loc} \doteq \relx{K}_{\JR}\cap C_{\Par}$. Since $H_{\rm loc}$ is closed and convex and contains $\bd(C_{\Par}) \cap \relx{K}_{\JR}$, we have $\clos(\conv(\bd(C_{\Par}) \cap \relx{K}_{\JR})) \subseteq H_{\rm loc}$. Note that the condition $\relx{K}_{\JR} \not\subseteq \clos(C_{\Par}^c)$ makes $H_{\rm loc}$ non-empty. We next show the reverse inclusion.

  Note that the boundary $\bd(H_{\rm loc})$ is a subset of $\bd(\relx{K}_{\JR}) \cup \bd(C_{\Par})$. Also, $\bd(\relx{K}_{\JR})$ consists of the two boundary rays $\relx{R}_1$ and $\relx{R}_2$ starting from the apex $\relx{y}'$.

  If a ray $\relx{R}_i$ is not a recession ray, since $q(\relx{y}')\ge 0$ (the apex is not in the interior of $C_{\Par}$), the ray can have at most one continuous line segment intersecting $C_{\Par}$. See for example \Cref{fig:intersection-cases-outside}\subref{fig:intersection-cases-outside-both}--\subref{fig:intersection-cases-outside-two-one-ray}.
  If this line segment exists, it is the convex combination of its endpoints, which are intersection points of $\relx{R}_i$ with $\bd(C_{\Par})$ (thus in $\bd(C_{\Par}) \cap \relx{K}_{\JR}$). Consequently, this segment lies in $\conv(\bd(C_{\Par}) \cap \relx{K}_{\JR})$.

  Alternatively, if a recession ray (say $\relx{R}_1$) exists, then $\relx{r}_{12}=0$ and $\relx{r}_{11}<0$, so $h_1(\lambda)=q(\relx{y}')+\lambda \relx{r}_{11}$ has a unique nonnegative root. Let $\relx{v}$ be the corresponding intersection with $\bd(C_{\Par})$, so \(\relx{v}_1=\relx{v}_2^2/\Delta\). See for example \Cref{fig:intersection-cases-outside}\subref{fig:intersection-cases-outside-recession-plus}--\subref{fig:intersection-cases-outside-recession-only}. Then, the intersection of the horizontal $\relx{R}_1$ with $C_{\Par}$ is a ray $\ell =\{\relx{v}-se_1:s\ge0\} \subseteq \relx{R}_1$ starting at $\relx{v} \in \bd(C_{\Par})\cap\relx{K}_{\JR}$.  Since \(\relx{r}_1=(-1,0)\) and \(\relx{r}_2\perp\relx{r}_1\) is a unit vector, we have \(\relx{r}_2=(0,\sigma)\) for a sign \(\sigma\in\{\pm1\}\), and
  membership of any point $\relx{y}$  in \(\relx{K}_{\JR}\) amounts to \(\relx{r}_i^\top(\relx{y}-\relx{y}')\ge0\) for \(i=1,2\). For \(t\ge0\), define
  \[
  p(t)\doteq \left(\frac{(\relx{v}_2+\sigma t)^2}{\Delta},\ \relx{v}_2+\sigma t\right)
  \in \bd(C_{\Par}).
  \]
  Because \(\relx{R}_1\) is horizontal, \(\relx{v}_2=\relx{y}'_2\), so \(\relx{r}_2^\top(p(t)-\relx{y}')=\sigma\bigl(p_2(t)-\relx{y}'_2\bigr)=t\ge0\) for
  all \(t\ge0\). Moreover, \(q(\relx{y}')\ge0\) gives \(\relx{y}'_1\ge \relx{v}_2^2/\Delta=\relx{v}_1\), hence
  \[
  \relx{r}_1^\top(p(t)-\relx{y}')
  =\relx{y}'_1-p_1(t)
  \ge \relx{v}_1-p_1(t)
  =-\frac{2\sigma \relx{v}_2 t+t^2}{\Delta}
  \ \ge\ 0
  \qquad\text{for all } t\ge 2|\relx{v}_2|,
  \]
  since \(\Delta<0\). Therefore
  \[
  p(t)\in \bd(C_{\Par})\cap\relx{K}_{\JR}
  \qquad\text{for all } t\ge 2|\relx{v}_2|.
  \]
Since \(p_1(t)\sim t^2/\Delta\to-\infty\) whereas \(p_2(t)\) grows only
  linearly in \(t\), we have \(\|p(t)\|\to\infty\) and
  \(p(t)/\|p(t)\|\to-e_1\). Applying \Cref{lem:recession-limit} to
  \(S=\bd(C_{\Par})\cap\relx{K}_{\JR}\), with the sequence \(p(t_k)\),
  \(t_k\to\infty\), \(z=\relx{v}\in S\), and \(d=-e_1\), yields
  \(\relx{v}-se_1\in\clos(\conv(\bd(C_{\Par})\cap\relx{K}_{\JR}))\) for every
  \(s\ge0\), i.e.,
  \(\ell\subseteq\clos(\conv(\bd(C_{\Par})\cap\relx{K}_{\JR}))\).

  Thus, the boundary $\bd(H_{\rm loc})$ consists entirely of parabolic arcs from $\bd(C_{\Par})\cap \relx{K}_{\JR}$ and line segments or rays contained in $\clos(\conv(\bd(C_{\Par}) \cap \relx{K}_{\JR}))$. Therefore, $\bd(H_{\rm loc}) \subseteq \clos(\conv(\bd(C_{\Par}) \cap \relx{K}_{\JR}))$. We now obtain the remaining points of $H_{\rm loc}$ by a direct planar argument. Let $z\in H_{\rm loc}$. If $z\in\bd(H_{\rm loc})$, the previous inclusion applies. Otherwise, choose a direction $w$ such that neither $w$ nor $-w$ is a recession direction of $H_{\rm loc}$; such a direction exists because $\operatorname{rec}(H_{\rm loc})\subseteq\operatorname{rec}(C_{\Par})$, and the parabolic bowl has at most a one-dimensional recession cone. The section $I\doteq\{t\in\bR:z+tw\in H_{\rm loc}\}$ is a closed interval containing $0$. Since neither $w$ nor $-w$ is a recession direction, this interval is bounded, say $I=[t_-,t_+]$ with $t_-<0<t_+$. Its endpoints give $z^-=z+t_-w$ and $z^+=z+t_+w$ in $\bd(H_{\rm loc})$, and $z\in[z^-,z^+]$. Since both endpoints lie in $\clos(\conv(\bd(C_{\Par}) \cap \relx{K}_{\JR}))$, and this set is closed and convex, $z$ lies in it as well. Hence $H_{\rm loc} \subseteq \clos(\conv(\bd(C_{\Par}) \cap \relx{K}_{\JR}))$.

  We then prove the second identity $\clos(\conv(\relx{K}_{\JR}\setminus \inter(C_{\Par})))=\relx{K}_{\JR}$. Since $\relx{K}_{\JR}$ is closed and convex, $\clos(\conv(\relx{K}_{\JR}\setminus \inter(C_{\Par}))) \subseteq\relx{K}_{\JR}$. We next show the reverse inclusion.

  If a ray $\relx{R}_i$ is not a recession ray, then either $\relx{R}_i\cap\inter(C_{\Par})=\varnothing$, in which case $\relx{R}_i\subseteq \clos(\relx{K}_{\JR}\setminus \inter(C_{\Par}))$, or $\relx{R}_i\cap\inter(C_{\Par})$ is a bounded open segment. In the latter case, the endpoints of this segment lie in $\relx{R}_i\setminus \inter(C_{\Par})$, so the segment is contained in the convex hull of $\relx{R}_i\setminus \inter(C_{\Par})$. Hence $\relx{R}_i \subseteq \clos(\conv(\relx{K}_{\JR}\setminus \inter(C_{\Par})))$.

  Alternatively, if a recession ray (say $\relx{R}_1$) exists, let $\relx{v}$ be its unique intersection with $\bd(C_{\Par})$. Then, the intersection of $\relx{R}_1$ with $\inter(C_{\Par})$ is an unbounded open ray. However, as shown earlier, the closed ray $\ell \subseteq \relx{R}_1 \cap C_{\Par}$ satisfies $\ell \subseteq \clos(\conv(\bd(C_{\Par}) \cap \relx{K}_{\JR}))$. Since $\bd(C_{\Par}) \cap \relx{K}_{\JR} \subseteq \relx{K}_{\JR}\setminus\inter(C_{\Par})$, we have $\ell \subseteq \clos(\conv(\relx{K}_{\JR}\setminus \inter(C_{\Par})))$. Moreover, the remaining segment $\relx{R}_1 \smallsetminus \ell$ lies in $\relx{R}_1 \setminus \inter(C_{\Par}) \subseteq \clos(\conv(\relx{K}_{\JR}\setminus \inter(C_{\Par})))$. This shows that $\relx{R}_i \subseteq \clos(\conv(\relx{K}_{\JR}\setminus \inter(C_{\Par})))$ for all $i=1,2$.

  Since the boundary rays $\relx{R}_1, \relx{R}_2$ and the arc $\bd(C_{\Par}) \cap \relx{K}_{\JR}$ are all contained in $\clos(\conv(\relx{K}_{\JR}\setminus \inter(C_{\Par})))$, their closed convex hull, which is the entire cone $\relx{K}_{\JR}$, must also be contained within it. Hence $\relx{K}_{\JR} \subseteq \clos(\conv(\relx{K}_{\JR}\setminus \inter(C_{\Par})))$.

\end{proof}

\begin{figure}[!htbp]
  \centering
  \begin{subfigure}[t]{0.48\textwidth}
    \centering
    \resizebox{\linewidth}{!}{
\begin{tikzpicture}[>=Stealth, scale=0.95, font=\small]
  \def\xmin{-3.5}\def\xmax{1.1}\def\ymin{-2.6}\def\ymax{2.6}\def\yclip{2.6}
  \coordinate (A) at (-1.5,0);
  \coordinate (P1) at (1.1,-2.6);
  \coordinate (P2) at (1.1,2.6);
  \begin{scope}
    \clip (\xmin,\ymin) rectangle (\xmax,\ymax);
    \fill[gray!20] (\xmin,\ymin) -- (\xmin,\ymax)
      -- plot[variable=\t, domain=\yclip:-\yclip, samples=120] ({-0.45*\t*\t},{\t}) -- cycle;
    \fill[blue!12] (-1.5,0) -- (1.1,-2.6) -- (1.1,2.6) -- cycle;
  \end{scope}
  \draw[gray!50, thin, ->] (\xmin,0) -- (\xmax,0) node[right, gray!60]{\scriptsize$\relx{y}_1$};
  \draw[gray!50, thin, ->] (0,\ymin) -- (0,\ymax) node[above, gray!60]{\scriptsize$\relx{y}_2$};
  \node[gray!60, font=\scriptsize] at (0.12,-0.18) {$0$};
  \draw[gray!70, thick] plot[variable=\t, domain=-\yclip:\yclip, samples=120] ({-0.45*\t*\t},{\t});
  \draw[blue!70!black, thick] (A) -- (P1);
  \draw[blue!70!black, thick] (A) -- (P2);
  \fill[blue!70!black] (A) circle (2pt) node[below left, font=\footnotesize] {$\relx{y}'$};
  \node[blue!60!black, font=\footnotesize] at (-0.633333,0) {$\relx{K}_{\JR}$};
\end{tikzpicture}}
    \caption{Two intersections.}
    \label{fig:intersection-cases-inside-one-one}
  \end{subfigure}\hfill
  \begin{subfigure}[t]{0.48\textwidth}
    \centering
    \resizebox{\linewidth}{!}{
\begin{tikzpicture}[>=Stealth, scale=0.95, font=\small]
  \def\xmin{-3.5}\def\xmax{1.1}\def\ymin{-2.6}\def\ymax{2.6}\def\yclip{2.6}
  \coordinate (A) at (-1,0.5);
  \coordinate (P1) at (-1,2.6);
  \coordinate (P2) at (-3.5,0.5);
  \begin{scope}
    \clip (\xmin,\ymin) rectangle (\xmax,\ymax);
    \fill[gray!20] (\xmin,\ymin) -- (\xmin,\ymax)
      -- plot[variable=\t, domain=\yclip:-\yclip, samples=120] ({-0.45*\t*\t},{\t}) -- cycle;
    \fill[blue!12] (-3.5,0.5) -- (-1,0.5) -- (-1,2.6) -- (-3.5,2.6) -- cycle;
  \end{scope}
  \draw[gray!50, thin, ->] (\xmin,0) -- (\xmax,0) node[right, gray!60]{\scriptsize$\relx{y}_1$};
  \draw[gray!50, thin, ->] (0,\ymin) -- (0,\ymax) node[above, gray!60]{\scriptsize$\relx{y}_2$};
  \node[gray!60, font=\scriptsize] at (0.12,-0.18) {$0$};
  \draw[gray!70, thick] plot[variable=\t, domain=-\yclip:\yclip, samples=120] ({-0.45*\t*\t},{\t});
  \draw[blue!70!black, thick] (A) -- (P1);
  \draw[blue!70!black, thick] (A) -- (P2);
  \fill[blue!70!black] (A) circle (2pt) node[below right, font=\footnotesize] {$\relx{y}'$};
  \node[blue!60!black, font=\footnotesize] at (-1.625,1.025) {$\relx{K}_{\JR}$};
\end{tikzpicture}}
    \caption{Recession ray inside $C_{\Par}$.}
    \label{fig:intersection-cases-inside-recession}
  \end{subfigure}
  \caption{Representative configurations for the interior-apex case ($q(\relx{y}') < 0$).}
  \label{fig:intersection-cases-inside}
\end{figure}

If the apex is in the interior of the parabolic bowl $C_{\Par}$ (illustrated by \Cref{fig:intersection-cases-inside}), we have the following result.

\begin{theorem}[Interior-apex case]
  \label{thm:conv-hull-interior-apex}
  Assume that $\relx{K}_{\JR} \not\subseteq \clos(C_{\Par}^c)$, \(C_{\Par}\not\subseteq \relx{K}_{\JR}\) (\(q(\relx{y}')<0\)). Exactly one of the following holds:
  \begin{description}
    \item[\textbf{(i)}]
          Each boundary ray \(\relx{R}_i\) meets \(\bd(C_{\Par})\) at a unique point \(\relx{v}_i\) (see for example \Cref{fig:intersection-cases-inside}\subref{fig:intersection-cases-inside-one-one}). Let \(H_{12}\) be the closed half-space bounded by the line through \(\relx{v}_1\) and \(\relx{v}_2\) that does not contain the apex \(\relx{y}'\). Then:
          \begin{itemize}
            \item if \(\relx{S}_{\JR} = \bd(C_{\Par})\), then $\clos(\conv(\relx{Y}_{\JR})) = C_{\Par} \cap H_{12}$.
            \item if \(\relx{S}_{\JR} = \clos(C_{\Par}^c)\), then $\clos(\conv(\relx{Y}_{\JR})) = \relx{K}_{\JR} \cap H_{12}$.
          \end{itemize}

    \item[\textbf{(ii)}]
          One boundary ray \(\relx{R}_{i'}\) is a recession ray contained in \(C_{\Par}\), and the other boundary ray \(\relx{R}_i\) meets \(\bd(C_{\Par})\) at a unique point \(\relx{v}\) (see for example \Cref{fig:intersection-cases-inside}\subref{fig:intersection-cases-inside-recession}). Let \(H_{\relx{v}}\) be the closed half-space bounded by the line through \(\relx{v}\) parallel to \(\relx{R}_{i'}\) that does not contain the apex \(\relx{y}'\). Then:
          \begin{itemize}
            \item if \(\relx{S}_{\JR} = \bd(C_{\Par})\), then $\clos(\conv(\relx{Y}_{\JR})) = C_{\Par} \cap H_{\relx{v}}$.
            \item if \(\relx{S}_{\JR} = \clos(C_{\Par}^c)\), then $\clos(\conv(\relx{Y}_{\JR})) = \relx{K}_{\JR} \cap H_{\relx{v}}$.
          \end{itemize}
  \end{description}
\end{theorem}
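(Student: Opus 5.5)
We first establish the dichotomy (i)/(ii) and then prove the two hull identities in each case using the same two-sided inclusion strategy as in \Cref{thm:closure-conv-diff}.

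\emph{Dichotomy.} Since \(q(\relx{y}')<0\), along each boundary ray \(h_i(0)=q(\relx{y}')<0\). If \(\nu_i>0\), then \(h_i\) is a convex quadratic that is negative at \(0\) and tends to \(+\infty\). Hence it has exactly one positive root, and \(\relx{R}_i\) meets \(\bd(C_{\Par})\) at a unique point \(\relx{v}_i\). If \(\nu_i=0\), then \(\relx{r}_{i2}=0\) and \(\relx{r}_{i1}=\pm1\), so \(h_i(\lambda)=q(\relx{y}')+\chi_i\lambda\) with \(\chi_i=\relx{r}_{i1}\). For \(\chi_i=-1\) the ray is a recession ray lying entirely in \(C_{\Par}\). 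For \(\chi_i=+1\) it has a unique root. Because at most one ray is a recession ray, exactly one of the following holds: both rays cross \(\bd(C_{\Par})\) once, giving (i), or one is a recession ray inside \(C_{\Par}\) and the other crosses once, giving (ii). Both rays cannot be recession rays because \(\relx{r}_1\perp\relx{r}_2\).

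\emph{Outer inclusion.} The key claim is that \(\relx{K}_{\JR}\cap\inter(C_{\Par})\cap H^c\) contains no point of \(\relx{S}_{\JR}\), where \(H\) is \(H_{12}\) or \(H_{\relx{v}}\). We argue as follows. The region \(\relx{K}_{\JR}\setminus H\) is the triangle \(\conv\{\relx{y}',\relx{v}_1,\relx{v}_2\}\) in case (i), minus the closed edge \([\relx{v}_1,\relx{v}_2]\). In case (ii) it is the region between \(\relx{R}_{i'}\) and the parallel line through \(\relx{v}\). In case (i) the triangle's vertices lie in the convex set \(C_{\Par}\), so the whole triangle lies in \(C_{\Par}\). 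In case (ii) the region is \(\conv(\{\relx{y}',\relx{v}\})+\bR_+\relx{r}_{i'}\), and \(\relx{r}_{i'}\) is a recession direction of \(C_{\Par}\), so the region again lies in \(C_{\Par}\).

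It remains to show that only the edge \([\relx{v}_1,\relx{v}_2]\), respectively the parallel line through \(\relx{v}\), can touch \(\bd(C_{\Par})\). Every other point is a strict convex combination involving \(\relx{y}'\in\inter(C_{\Par})\), hence lies in \(\inter(C_{\Par})\). Consequently \(\relx{Y}_{\JR}\subseteq \relx{K}_{\JR}\cap H\). In the boundary case we also have \(\relx{Y}_{\JR}\subseteq C_{\Par}\). Since \(\relx{K}_{\JR}\cap H\) and \(C_{\Par}\cap H\) are closed and convex, this yields the inclusions \(\subseteq\). For the boundary case we must also check \(C_{\Par}\cap H\subseteq\relx{K}_{\JR}\), so that the stated set is the right one. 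The plan here is to note that \(C_{\Par}\setminus\relx{K}_{\JR}\) lies on the apex side of the chord. This holds because leaving \(\relx{K}_{\JR}\) from \(H\cap C_{\Par}\) would require crossing a ray \(\relx{R}_i\) beyond \(\relx{v}_i\), and those parts of the rays are outside \(\inter(C_{\Par})\).

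\emph{Inner inclusion.} This mirrors \Cref{thm:closure-conv-diff}. In the boundary case, \(\bd(C_{\Par})\cap H\subseteq\relx{Y}_{\JR}\). The boundary of \(C_{\Par}\cap H\) consists of this arc together with the chord \([\relx{v}_1,\relx{v}_2]\), which is a convex combination of points of \(\relx{Y}_{\JR}\). In case (ii) the chord is replaced by the ray \(\relx{v}+\bR_+\relx{r}_{i'}\). We obtain that ray via \Cref{lem:recession-limit}, using points \(p(t)\) on the unbounded arc of \(\bd(C_{\Par})\cap H\), whose normalized directions tend to \(\relx{r}_{i'}=-e_1\). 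Interior points are then handled by the chord argument: a line through \(z\) in a non-recession direction meets the boundary at two points, and \(z\) lies between them. In the solid case, the rays \(\relx{R}_i\) beyond \(\relx{v}_i\) lie in \(\relx{Y}_{\JR}\). Together with the chord or recession ray, and the arc, they make up the boundary of \(\relx{K}_{\JR}\cap H\), so the same sandwich argument applies.

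The main obstacle is the outer inclusion, in particular showing rigorously that \(\relx{K}_{\JR}\setminus H\) lies in \(\inter(C_{\Par})\) apart from the cutting line, and that the stated sets coincide with \(\relx{K}_{\JR}\cap C_{\Par}\cap H\). The first thing to try is a strict-convexity argument along segments from \(\relx{y}'\).
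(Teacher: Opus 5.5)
Your proposal is correct. The dichotomy is the same as the paper's, but you organize the two inclusions differently.

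\emph{Outer inclusion.} The paper also uses the triangle $\conv\{\relx{y}',\relx{v}_1,\relx{v}_2\}\subseteq C_{\Par}$. It then excludes boundary points of $\relx{Y}_{\JR}$ outside $H_{12}$ by identifying $\bd(C_{\Par})\cap\relx{K}_{\JR}$ with the bounded arc from $\relx{v}_1$ to $\relx{v}_2$, and it treats case (ii) in explicit canonical coordinates. You instead obtain $\relx{K}_{\JR}\setminus H\subseteq\inter(C_{\Par})$ in both cases at once, from the positive weight on the interior apex (shifted along $\relx{r}_{i'}$ in case (ii)).

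\emph{Inner inclusion.} The paper reads off the case-(i) boundary hull as the region between arc and chord. In case (ii) it uses the representation $p(\beta)-se_1$ together with \Cref{lem:recession-limit}. It finishes both solid shapes by a sign split: a point of $\relx{K}_{\JR}\cap H$ lies in $C_{\Par}\cap H$ if $q\le0$, and in $\relx{Y}_{\JR}$ if $q\ge0$. You rerun the boundary sandwich of \Cref{thm:closure-conv-diff} instead.

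The sign split is the shorter way to finish the solid shapes. If you keep the sandwich there, note that $\operatorname{rec}(\relx{K}_{\JR}\cap H)$ is the full cone spanned by $\relx{r}_1,\relx{r}_2$, so you need a direction such as $w=\relx{r}_1-\relx{r}_2$.

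Your route has a genuine advantage: it never needs the shape of $\bd(C_{\Par})\cap\relx{K}_{\JR}$. Take $F(x)=(x^2+x,\,x^2-x)$ on $\bR$ with $\phi=(-1,-1)$, where $-e_1$ lies in the interior of the cone spanned by $\relx{r}_1,\relx{r}_2$. Case (i) applies, but $Y_{\JR}=\{F(x):|x|\ge(1+\sqrt5)/2\}$ consists of two unbounded branches, not the bounded arc the paper's case-(i) argument relies on. The stated formulas still hold there, and your argument covers this configuration without change.

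The step you leave as a plan, $C_{\Par}\cap H\subseteq\relx{K}_{\JR}$, needs care, because the exit from $\relx{K}_{\JR}$ can occur exactly at $\relx{v}_i$.

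\emph{Case (i).} Suppose $z\in C_{\Par}\cap H_{12}$ had $\relx{r}_1^\top(z-\relx{y}')<0$; the other index is symmetric. The segment $[z,\relx{v}_1]\subseteq C_{\Par}\cap H_{12}$ crosses the line containing $\relx{R}_2$. Membership in $H_{12}$ forces the crossing point to lie on $\relx{R}_2$ at or beyond $\relx{v}_2$. Since $h_2>0$ past its root, that point must be $\relx{v}_2$ itself. Then $\relx{v}_2$ lies in the open segment $(z,\relx{v}_1)$. That segment is contained in $\inter(C_{\Par})$ because $\bd(C_{\Par})$ contains no line segment, which is a contradiction.

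\emph{Case (ii).} This is immediate in canonical coordinates. Points of $C_{\Par}\cap H_{\relx{v}}$ have $|\relx{y}_2|\ge|\relx{v}_2|$, so $\relx{y}_1\le\relx{y}_2^2/\Delta\le\relx{v}_2^2/\Delta=\relx{y}'_1$. The other cone inequality is implied by $H_{\relx{v}}$.
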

\begin{proof}
  Regarding \eqref{eq:ray-coefficients},
  at most one \(\nu_i\) can vanish since \(\relx{r}_1,\relx{r}_2\) are orthonormal. If \(\nu_i=0\), then \(\chi_i\neq 0\) because \(\relx{r}_i\neq 0\). For any \(i\), the condition \(q(\relx{y}')<0\) implies
  \[
    \kappa_i=\chi_i^2-4\nu_i q(\relx{y}')\ge 0,
  \]
  so \(h_i(\lambda)=0\) always has real roots. Since \(h_i(0)=q(\relx{y}')<0\) and $h_i$ is convex as a function of $\lambda$ (because $\nu_i \ge 0$), at least one root must be positive. If \(\nu_i>0\), then \(h_i(\lambda)\to +\infty\) as \(\lambda\to +\infty\), so \(\relx{R}_i\) has exactly one intersection point with \(\bd(C_{\Par})\). If \(\nu_i=0\), then
  \[
    h_i(\lambda)=\chi_i\lambda+q(\relx{y}'),
  \]
  and this has a positive root if and only if \(\chi_i>0\); if \(\chi_i<0\), then \(h_i(\lambda)<0\) for all \(\lambda\ge 0\), and therefore \(\relx{R}_i\subseteq C_{\Par}\), which shows that \(\relx{R}_i\) is a recession ray. Hence, up to relabeling, either each ray meets \(\bd(C_{\Par})\) exactly once, giving case \textbf{(i)}, or one ray is a recession ray contained in \(C_{\Par}\) whereas the other meets \(\bd(C_{\Par})\) exactly once, giving case \textbf{(ii)}.

  \textbf{Case (i).}
  If \(\relx{S}_{\JR} = \bd(C_{\Par})\), then the set \(\relx{Y}_{\JR}\) is exactly the bounded parabolic arc of \(\bd(C_{\Par})\) joining \(\relx{v}_1\) and \(\relx{v}_2\). Since \(q(\relx{y}')<0\), the apex lies on the opposite side of the chord from the arc. Hence the half-space \(H_{12}\) excluding the apex is exactly the half-space containing the arc, and the convex hull of this arc is the convex region enclosed by the arc and its chord \([\relx{v}_1,\relx{v}_2]\), namely \(C_{\Par} \cap H_{12}\).

 If \(\relx{S}_{\JR} = \clos(C_{\Par}^c)\), then
\(\relx{Y}_{\JR}=\relx{K}_{\JR}\cap\clos(C_{\Par}^c)\).   We prove the identity
  \(\clos(\conv(\relx{Y}_{\JR}))=\relx{K}_{\JR}\cap H_{12}\) by two inclusions.
  We first check that \(\relx{Y}_{\JR}\subseteq H_{12}\). The part of the cone
  outside \(H_{12}\) satisfies
  \(\relx{K}_{\JR}\setminus H_{12}\subseteq\conv\{\relx{y}',\relx{v}_1,\relx{v}_2\}\subseteq C_{\Par}\),
  where the second inclusion follows from the convexity of \(C_{\Par}\) and
  \(\relx{y}',\relx{v}_1,\relx{v}_2\in C_{\Par}\). Hence any
  \(\relx{z}\in\relx{Y}_{\JR}\setminus H_{12}\) would satisfy both
  \(q(\relx{z})\ge0\) and \(q(\relx{z})\le0\), so
  \(\relx{z}\in\bd(C_{\Par})\cap\relx{K}_{\JR}\); but this set is the parabolic
  arc joining \(\relx{v}_1\) and \(\relx{v}_2\), which is contained in
  \(H_{12}\), a contradiction. Since \(\relx{K}_{\JR}\cap H_{12}\) is closed and convex and contains
  \(\relx{Y}_{\JR}\),  we immediately have
\[
\clos(\conv(\relx{Y}_{\JR}))\subseteq \relx{K}_{\JR}\cap H_{12}.
\]
For the reverse inclusion, the boundary-case argument above gives
    \(C_{\Par}\cap H_{12}\subseteq\clos(\conv(\bd(C_{\Par})\cap \relx{K}_{\JR}))
    \subseteq\clos(\conv(\relx{Y}_{\JR}))\), because
    \(\bd(C_{\Par})\subseteq\clos(C_{\Par}^c)\). Moreover, any
    \(\relx{z}\in\relx{K}_{\JR}\cap H_{12}\) satisfies either
    \(q(\relx{z})\le0\), so that \(\relx{z}\in C_{\Par}\cap H_{12}\), or
    \(q(\relx{z})\ge0\), so that
    \(\relx{z}\in\relx{K}_{\JR}\cap\clos(C_{\Par}^c)=\relx{Y}_{\JR}\).
    Both sets lie in \(\clos(\conv(\relx{Y}_{\JR}))\), hence
    \(\relx{K}_{\JR}\cap H_{12}\subseteq\clos(\conv(\relx{Y}_{\JR}))\),
    and the desired equality follows.

  \textbf{Case (ii).}
  Let \(\relx{R}_{i'}\) be the recession ray. Since \(\relx{r}_{i'2}=0\) and \(\relx{r}_i \perp \relx{r}_{i'}\), the other ray \(\relx{R}_i\) is vertical, whereas \(\relx{R}_{i'}\) is horizontal. Hence \(\bd(C_{\Par})\cap \relx{K}_{\JR}\) is a one-sided branch of the parabola issuing from \(\relx{v}\). Because \(q(\relx{y}')<0\), the apex lies on the side of the line through \(\relx{v}\) parallel to \(\relx{R}_{i'}\) opposite to that branch. Therefore the half-space \(H_{\relx{v}}\) excluding the apex is exactly the half-space containing \(\bd(C_{\Par})\cap \relx{K}_{\JR}\).

  If \(\relx{S}_{\JR} = \bd(C_{\Par})\), then the closed convex hull of this one-sided branch is precisely \(C_{\Par} \cap H_{\relx{v}}\). To see this,
keep the canonical coordinates \(\relx{y}=(\relx{y}_1,\relx{y}_2)\). Writing
  \(\sigma\in\{\pm1\}\) for the direction of the vertical ray, we have
  \(\relx{K}_{\JR}=\{\relx{y}:\relx{y}_1\le\relx{y}'_1,\ \sigma(\relx{y}_2-\relx{y}'_2)\ge0\}\).
  Since \(q(\relx{y}')<0\), the vertical line \(\relx{y}_1=\relx{y}'_1\) meets
  \(\bd(C_{\Par})\) at the two heights \(\pm\sqrt{\Delta\relx{y}'_1}\), with
  \(\relx{y}'_2\) strictly between them; hence
  \(\relx{v}=(\relx{y}'_1,\ \sigma\sqrt{\Delta\relx{y}'_1})\), i.e.,
  \(\sigma\relx{v}_2=|\relx{v}_2|\). Therefore
  \[
  p(t)\doteq \left(\frac{(\relx{v}_2+\sigma t)^2}{\Delta},\ \relx{v}_2+\sigma t\right),
  \qquad t\ge0,
  \]
  parameterizes the one-sided branch \(\bd(C_{\Par})\cap\relx{K}_{\JR}\):
  indeed, \(\sigma p_2(t)=|\relx{v}_2|+t\), so \(p_2(t)^2\ge\relx{v}_2^2\) gives
  \(p_1(t)\le\relx{y}'_1\) (as \(\Delta<0\)), and
  \(\sigma(p_2(t)-\relx{y}'_2)>0\).
Then \(H_{\relx{v}}=\{\sigma(\relx{y}_2-\relx{v}_2)\ge0\}\). The inclusion
\(\clos(\conv(\bd(C_{\Par})\cap\relx{K}_{\JR}))\subseteq C_{\Par}\cap H_{\relx{v}}\)
follows because the latter set is closed and convex and contains the branch.
Conversely, every point of \(C_{\Par}\cap H_{\relx{v}}\) can be written in the
form \(p(\beta)-se_1\) for some \(\beta\ge0\) and \(s\ge0\): indeed,
\(H_{\relx{v}}=\{\sigma(\relx{y}_2-\relx{v}_2)\ge0\}\) gives
\(\relx{y}_2=\relx{v}_2+\sigma\beta\) with \(\beta\ge0\), and then
\(q(\relx{y})\le0\) is equivalent to \(\relx{y}_1\le \relx{y}_2^2/\Delta\),
i.e., \(\relx{y}_1=(\relx{v}_2+\sigma\beta)^2/\Delta-s\) for some \(s\ge0\).
Since \(p_1(t)\sim t^2/\Delta\to-\infty\) whereas \(p_2(t)\) grows only
  linearly, \(\|p(t)\|\to\infty\) and \(p(t)/\|p(t)\|\to-e_1\). Applying
  \Cref{lem:recession-limit} to \(S=\bd(C_{\Par})\cap\relx{K}_{\JR}\), with
  \(z=p(\beta)\in S\) and \(d=-e_1\), yields
  \(p(\beta)-se_1\in\clos(\conv(\bd(C_{\Par})\cap\relx{K}_{\JR}))\) for all
  \(\beta\ge0\) and \(s\ge0\). Hence
  \(C_{\Par}\cap H_{\relx{v}}\subseteq\clos(\conv(\bd(C_{\Par})\cap\relx{K}_{\JR}))\),
  proving the boundary case.

If \(\relx{S}_{\JR}=\clos(C_{\Par}^c)\), then
\(\relx{Y}_{\JR}=\relx{K}_{\JR}\cap\clos(C_{\Par}^c)\).  Note that
  \(\relx{Y}_{\JR}\subseteq H_{\relx{v}}\), since
  \(\relx{K}_{\JR}\setminus H_{\relx{v}}
  \subseteq\{\relx{y}:\relx{y}_1\le\relx{y}'_1,\ |\relx{y}_2|<|\relx{v}_2|\}
  \subseteq\inter(C_{\Par})\)
  (for such \(\relx{y}\),
  \(q(\relx{y})=\relx{y}_1-\relx{y}_2^2/\Delta<\relx{y}'_1-\relx{v}_2^2/\Delta=0\)). The set
\(\relx{K}_{\JR}\cap H_{\relx{v}}\) is closed and convex and contains
\(\relx{Y}_{\JR}\), so
\(\clos(\conv(\relx{Y}_{\JR}))\subseteq\relx{K}_{\JR}\cap H_{\relx{v}}\).
For the reverse inclusion, the boundary case gives
\(C_{\Par}\cap H_{\relx{v}}\subseteq\clos(\conv(\relx{Y}_{\JR}))\), and the
remaining points of \(\relx{K}_{\JR}\cap H_{\relx{v}}\) lie in
\(\relx{K}_{\JR}\cap\clos(C_{\Par}^c)=\relx{Y}_{\JR}\). Therefore
\(\relx{K}_{\JR}\cap H_{\relx{v}}\subseteq\clos(\conv(\relx{Y}_{\JR}))\), and
equality follows.

\end{proof}

\subsection{Separation of Joint-Range Inequalities in the Nonconvex Case}
\label{sec:separation-parabolic-nonconvex}

Since the joint-range inequalities are, by definition, the valid inequalities of $H_{\JR}$, separation reduces to finding a most violated supporting inequality of $H_{\JR}$. In the nontrivial cases above, the convex hull in canonical coordinates is obtained as the intersection of one or several elementary convex sets: a polyhedron or the parabolic bowl \(C_{\Par}\). Hence, given a query point \(\relx{y}^*\) to cut off, it suffices to compute a supporting inequality for each elementary convex set and then choose the best one according to some criterion, for example the Euclidean distance from \(\relx{y}^*\) to the corresponding half-space.

For the polyhedral part, separation is immediate: one checks the supporting inequalities of the corresponding polyhedron and selects the best candidate among them.

\begin{remark}[Secant joint-range inequalities and intersection cuts]
  \label{rem:parabolic-secant-intersection-cuts}
  In the interior-apex case of \Cref{thm:conv-hull-interior-apex}, the parabolic bowl $C_{\Par}$ is an $\relx{S}_{\JR}$-free set whose interior contains the apex of $\relx{K}_{\JR}$.  The half-spaces $H_{12}$ and $H_{\relx{v}}$ are defined by the points where the boundary rays of $\relx{K}_{\JR}$ leave $C_{\Par}$.  These are exactly the intersection-cut step-length points, so the valid inequalities are intersection cuts generated by $(\relx{K}_{\JR},C_{\Par})$. In particular, we call them secant joint-range inequalities.
\end{remark}

For the parabolic bowl \(C_{\Par}\), it suffices to separate the tangent inequalities of \(\bd(C_{\Par})\). Every point on
\[
  \bd(C_{\Par})=\{\relx{y}_1=\relx{y}_2^2/\Delta\}
\]
can be parameterized as \((\tau^2/\Delta,\tau)\), \(\tau\in\bR\). The closest point on \(\bd(C_{\Par})\) to \(\relx{y}^*\) is obtained by minimizing
\[
  f(\tau)=\Bigl(\tfrac{\tau^2}{\Delta}-\relx{y}_1^*\Bigr)^2+(\tau-\relx{y}_2^*)^2.
\]
Setting \(f'(\tau)=0\) and multiplying by \(\Delta^2\) yields the cubic equation
\begin{equation}
  \label{eq:cubic-proj}
  2\tau^3+(\Delta^2-2\Delta \relx{y}_1^*)\tau-\Delta^2 \relx{y}_2^*=0.
\end{equation}
Among the real roots of \eqref{eq:cubic-proj}, we choose the one minimizing \(f(\tau)\). If \(\relx{v}=(\tau^2/\Delta,\tau)\) is the corresponding closest point, then the supporting tangent inequality for \(C_{\Par}\) at \(\relx{v}\) is
\[
  \relx{y}_1-\frac{2\tau}{\Delta}\relx{y}_2 \le -\frac{\tau^2}{\Delta}.
\]

After generating all candidate projected inequalities, we select the best supporting inequality $  \mu^\top \relx{y} \le \mu_0$ in the two-dimensional image space and then lift it back to the extended QCQP formulation by substituting the affine map $\cA_{\JR}$ (and, in canonical coordinates, the inverse transform $T^{-1}$):
\begin{equation}
  \mu^\top T^{-1}(\cA_{\JR}(X,x)) = \mu^\top(D^{-1}\cA_{\JR}(X,x) -c) \le \mu_0.
\end{equation}
This completes the project-then-lift pipeline: the geometry is handled in two dimensions, whereas the final valid inequality is expressed in the extended formulation and inherits the sparsity of the base inequalities.

\subsection{Separation of Joint-Range Inequalities in the Convex Case}

When $S_{\JR}$ is convex, $H_{\JR} = \clos(S_{\JR} \cap K_{\JR})$, so every supporting half-space of $S_{\JR}$ yields a joint-range inequality. Beyond the two base inequalities defining $K_{\JR}$, supporting inequalities of $S_{\JR}$ can provide nonredundant joint-range inequalities for $H_{\JR}$ when their exposed faces meet $K_{\JR}$ nontrivially.

Define the support function of $S_{\JR}$,
\begin{equation}
  \label{eq:sigma_support}
  \sigma: \bR^2 \to \bR \cup \{\infty\}, \quad \mu \mapsto \sigma(\mu) \doteq \sup_{y\in S_{\JR}}\mu^\top y.
\end{equation}
Standard properties of support functions \cite{hiriart2004fundamentals} include that $\sigma$ is convex and positively 1-homogeneous (hence subadditive), as it is the pointwise supremum of linear functions; its effective domain $\dom(\sigma)\doteq\{\mu:\sigma(\mu)<\infty\}$ is a convex cone; and every valid supporting inequality for $S_{\JR}$ has the form $\mu^\top y \le \sigma(\mu)$.

To evaluate $\sigma(\mu)$ in closed form, define the aggregated coefficients
$Q(\mu)\doteq \mu_1\Theta_1+\mu_2\Theta_2$ and $\varrho(\mu)\doteq \mu_1\theta_1+\mu_2\theta_2$.
Recall that the \emph{Moore--Penrose pseudoinverse} of a symmetric matrix $A$ with eigenvalue decomposition $A = U\Lambda U^\top$ is
\[
  A^\dagger \;\doteq\; U\Lambda^\dagger U^\top, \qquad
  (\Lambda^\dagger)_{ii} =
  \begin{cases}
    \lambda_i^{-1}, & \lambda_i \neq 0, \\
    0,              & \lambda_i = 0.
  \end{cases}
\]

\begin{lemma}
  \label{lem:sigma-formula}
  The support function of $S_{\JR}$ is given by
  \begin{equation}
    \label{eq:sigma_formula}
    \sigma(\mu)=
    \begin{cases}
      +\infty,                                                          & Q(\mu)\not\preceq 0\;\text{or}\;\varrho(\mu)\notin\rang(Q(\mu)), \\[2pt]
      \tfrac{1}{4}\,\varrho(\mu)^\top (-Q(\mu))^\dagger\, \varrho(\mu), & Q(\mu)\preceq 0,\;\varrho(\mu)\in\rang(Q(\mu)).
    \end{cases}
  \end{equation}
  In the finite case $Q(\mu)\preceq 0$ and
  $\varrho(\mu)\in\rang(Q(\mu))$, the supremum is attained at
  $x^\star(\mu)=\tfrac{1}{2}\,(-Q(\mu))^\dagger\,\varrho(\mu)+z$ for any
  $z\in\ker Q(\mu)$.
\end{lemma}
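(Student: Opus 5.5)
The plan is to rewrite the support function as an unconstrained quadratic maximization over $x$. Since $S_{\JR}=F(\bR^n)$, every $y\in S_{\JR}$ has the form $y=F(x)$. Hence
\[
  \sigma(\mu)=\sup_{x\in\bR^n} g_\mu(x),\qquad g_\mu(x)\doteq x^\top Q(\mu)x+\varrho(\mu)^\top x .
\]
The statement then reduces to the classical characterization of when a quadratic function is bounded above, together with the value of its supremum. I will write $Q=Q(\mu)$ and $\varrho=\varrho(\mu)$, and diagonalize $Q=U\Lambda U^\top$. Changing variables to $w=U^\top x$ gives the separable form
\[
  g_\mu=\sum_i \bigl(\lambda_i w_i^2+(U^\top\varrho)_i w_i\bigr).
\]

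\textbf{Unbounded cases.} Suppose first that $Q\not\preceq 0$. Then there is an eigenvector $u$ with $u^\top Qu>0$, and $g_\mu(tu)=t^2u^\top Qu+t\varrho^\top u\to\infty$, so $\sigma(\mu)=+\infty$. Now suppose $Q\preceq0$ but $\varrho\notin\rang(Q)$. Since $Q$ is symmetric, $\rang(Q)=\ker(Q)^\perp$, so $\varrho$ has a nonzero component $z_0$ along $\ker Q$. Taking $x=tz_0$ gives $g_\mu(x)=t\|z_0\|^2\to\infty$, and again $\sigma(\mu)=+\infty$.

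\textbf{Finite case.} Let $Q\preceq0$ and $\varrho\in\rang(Q)$. Then $g_\mu$ is concave and differentiable, so its maximizers are exactly the solutions of the first-order condition $2Qx+\varrho=0$, that is, $(-Q)x=\tfrac12\varrho$. This system is solvable because $\varrho\in\rang(Q)=\rang(-Q)$. Its solution set is $\tfrac12(-Q)^\dagger\varrho+\ker Q$, using the pseudoinverse property $(-Q)(-Q)^\dagger\varrho=\varrho$ for $\varrho$ in the range. I will substitute $x^\star=\tfrac12(-Q)^\dagger\varrho+z$ with $z\in\ker Q$. First, $\varrho^\top z=0$ because $\varrho\perp\ker Q$. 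Next, $x^{\star\top}Qx^\star=-\tfrac14\varrho^\top(-Q)^\dagger\varrho$, using $A^\dagger AA^\dagger=A^\dagger$ and $Qz=0$. Finally, $\varrho^\top x^\star=\tfrac12\varrho^\top(-Q)^\dagger\varrho$. Summing these gives the value $\tfrac14\varrho^\top(-Q)^\dagger\varrho$. This also confirms that the supremum is attained at every $x^\star(\mu)$ of the stated form.

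None of these steps is deep. The only point requiring care is the pseudoinverse bookkeeping: that $\rang(Q)=\ker(Q)^\perp$ for symmetric $Q$, and that $(-Q)(-Q)^\dagger$ is the orthogonal projector onto $\rang(Q)$. Both follow immediately from the eigendecomposition definition of $A^\dagger$ given above, so I would carry out the whole argument in the eigenbasis coordinates $w$, where every claim becomes a coordinatewise scalar statement.
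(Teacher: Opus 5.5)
Your proposal is correct and follows essentially the same route as the paper. Both reduce to $\sup_x x^\top Q(\mu)x+\varrho(\mu)^\top x$, use the same two unboundedness arguments (a positive-eigenvalue direction, and the component of $\varrho(\mu)$ in $\ker Q(\mu)$), and handle the finite case with an eigenbasis computation. The only cosmetic difference is in the finite case: you solve the first-order condition $(-Q)x=\tfrac12\varrho$ using pseudoinverse identities, whereas the paper completes the square in the coordinates $x=U_r\xi+z$; both yield the same maximizer set and optimal value.
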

\begin{proof}
  Since $\mu_1 f_1(x)+\mu_2 f_2(x)=x^\top Q(\mu)\,x+\varrho(\mu)^\top x$, we have $\sigma(\mu)=\sup_{x}\bigl(x^\top Q(\mu)\,x+\varrho(\mu)^\top x\bigr)$. Three cases arise:
  \begin{enumerate}[label=(\alph*)]
    \item If $Q(\mu)\not\preceq 0$, the objective is unbounded along any positive-eigenvalue direction, so $\sigma(\mu)=+\infty$.
    \item If $Q(\mu)\preceq 0$ but $\varrho(\mu)\notin\rang(Q(\mu))$, the linear term is unbounded on $\ker Q(\mu)$, so $\sigma(\mu)=+\infty$.
    \item If $Q(\mu)\preceq 0$ and $\varrho(\mu)\in\rang(Q(\mu))$, write the eigenvalue decomposition of the rank-$r$ matrix $Q(\mu)=U_r\Lambda_r U_r^\top$ with negative diagonal entries $\lambda_i<0$ in $\Lambda_r$. Decomposing $x=U_r\xi+z$ ($z\in\ker Q(\mu)$) reduces the objective to the strictly concave quadratic $\xi^\top\Lambda_r\xi+(U_r^\top\varrho(\mu))^\top \xi$. Completing the square yields the maximum $\tfrac{1}{4}\,\varrho(\mu)^\top(-Q(\mu))^\dagger \varrho(\mu)$, attained at $\xi^\star=-\tfrac{1}{2}\Lambda_r^{-1}U_r^\top\varrho(\mu)$.
  \end{enumerate}
\end{proof}

Given a query point $y^*\in\bR^2$, define the \emph{cut violation function}
\begin{equation}
  \label{eq:support_separation}
  v(\mu)\doteq\sigma(\mu)-\mu^\top y^*
  =\sup_{y\in S_{\JR}}\mu^\top(y-y^*).
\end{equation}
A violated valid inequality exists if and only if $\inf_\mu v(\mu)<0$. Since
$v$ is positively 1-homogeneous in $\mu$, we find a bounded convex region
$\bB_2$ (e.g., unit Euclidean ball) and impose a normalization constraint $\mu \in \bB_2$, reducing the separation problem to
\begin{equation}
  \label{eq:separation-problem}
  \inf_{\mu \in \dom(\sigma) \cap \bB_2}\; v(\mu).
\end{equation}

\begin{theorem}[Convex reformulation of separation problem]
  \label{thm:sdp-separation}
  The domain condition $Q(\mu)\preceq 0$, $\varrho(\mu)\in\rang(Q(\mu))$ is equivalent to the existence of $\zeta\in\bR$ satisfying
  \begin{equation}
    \label{eq:sdp-domain}
    \begin{pmatrix}
      -Q(\mu)                       & \tfrac{1}{2}\varrho(\mu) \\
      \tfrac{1}{2}\varrho(\mu)^\top & \zeta
    \end{pmatrix}
    \succeq 0.
  \end{equation}
  Consequently, \eqref{eq:separation-problem} is equivalent to the semidefinite constrained convex program
  \begin{equation}
    \label{eq:sdp-separation}
    \min_{\mu\in \bB_2,\,\zeta}\;\bigl(\zeta - \mu^\top y^*\bigr)
    \quad\text{s.t.}\quad
    \begin{pmatrix}
      -Q(\mu)                       & \tfrac{1}{2}\varrho(\mu) \\
      \tfrac{1}{2}\varrho(\mu)^\top & \zeta
    \end{pmatrix}
    \succeq 0.
  \end{equation}
\end{theorem}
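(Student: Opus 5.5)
The plan is to use the generalized Schur complement lemma for block PSD matrices. For a symmetric block matrix $\begin{pmatrix} A & b\\ b^\top & \zeta\end{pmatrix}$ with $A=-Q(\mu)$ and $b=\tfrac12\varrho(\mu)$, it states that the matrix is PSD if and only if
\[
A\succeq 0,\qquad b\in\rang(A),\qquad \zeta-b^\top A^\dagger b\ge 0 .
\]
I would prove this directly rather than cite it. The quadratic form is $g(w,t)=w^\top A w+2t\,b^\top w+\zeta t^2$.

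For necessity, setting $t=0$ gives $A\succeq 0$. If $b\notin\rang(A)$, pick $z\in\ker A$ with $b^\top z\neq 0$. Then $g(sz,1)=2s\,b^\top z+\zeta$, which becomes negative for a suitable $s$, a contradiction. Finally, taking $w=-A^\dagger b$ and $t=1$ gives $\zeta-b^\top A^\dagger b\ge 0$, using $AA^\dagger b=b$.

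For sufficiency, complete the square with $AA^\dagger b=b$:
\[
g(w,t)=(w+tA^\dagger b)^\top A\,(w+tA^\dagger b)+t^2\bigl(\zeta-b^\top A^\dagger b\bigr)\ge 0 .
\]

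Note that $\rang(-Q)=\rang(Q)$. So the existence of some $\zeta$ making \eqref{eq:sdp-domain} hold is equivalent to $Q(\mu)\preceq 0$ and $\varrho(\mu)\in\rang(Q(\mu))$. In the forward direction any $\zeta\ge b^\top A^\dagger b$ works, for instance equality.

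For the program equivalence, I would invoke \Cref{lem:sigma-formula}. On $\dom(\sigma)$ we have
\[
\sigma(\mu)=\tfrac14\varrho^\top(-Q)^\dagger\varrho=b^\top A^\dagger b .
\]
By the lemma above, the feasible $\zeta$ for a fixed $\mu$ are exactly $\zeta\ge\sigma(\mu)$. Hence minimizing $\zeta-\mu^\top y^*$ over $\zeta$ returns $v(\mu)$, attained at $\zeta=\sigma(\mu)$. The $\mu$-projection of the feasible set of \eqref{eq:sdp-separation} is $\dom(\sigma)\cap\bB_2$, so both problems have the same infimum, and optimal $\mu$ correspond. Convexity follows because the LMI is affine in $(\mu,\zeta)$, $\bB_2$ is convex, and the objective is linear.

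The only delicate point is the range condition in the necessity direction. That is handled by the kernel-direction argument above, together with the pseudoinverse identity $AA^\dagger b=b$ for $b\in\rang(A)$, which follows from the eigendecomposition definition of $A^\dagger$.
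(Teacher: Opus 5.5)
Your proposal is correct and matches the paper's proof: the paper also reduces the LMI through the generalized Schur complement to $-Q(\mu)\succeq 0$, $\varrho(\mu)\in\rang(Q(\mu))$ and $\zeta\ge\tfrac14\varrho(\mu)^\top(-Q(\mu))^\dagger\varrho(\mu)$, then uses \Cref{lem:sigma-formula} to identify the optimal $\zeta$ with $\sigma(\mu)$ and the objective with $v(\mu)$. The only difference is that the paper cites the Schur complement lemma, whereas your kernel-direction and completing-the-square argument proves it inline.
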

\begin{proof}
  By the generalized Schur complement lemma \cite{albert1969conditions}, the matrix in~\eqref{eq:sdp-domain} is positive semidefinite if and only if $-Q(\mu)\succeq 0$, $\varrho(\mu)\in\rang(Q(\mu))$, and
  \[
    \zeta \ge \tfrac{1}{4}\varrho(\mu)^\top(-Q(\mu))^\dagger\varrho(\mu).
  \]
  By \Cref{lem:sigma-formula}, the right-hand side equals $\sigma(\mu)$ on $\dom(\sigma)$. Minimizing $\zeta$ therefore recovers $\zeta^\star=\sigma(\mu)$, and the objective becomes $v(\mu)$. Since $Q(\mu)$ and $\varrho(\mu)$ are affine in $\mu$, the constraint~\eqref{eq:sdp-domain} is a linear matrix inequality in $(\mu,\zeta)$.
\end{proof}
Thus,  the lifted joint-range inequality \begin{equation}
  \label{eq:lifted-joint-range-inequality}
  \mu^\top\cA_{\JR}(X,x) \le \sigma(\mu)
\end{equation}
is  a linear combination of the affine expressions $\cA_{\JR}(X,x)$  of two base inequalities with a tightened right-hand side.

Recall that the affine map $\cA_{\JR}(X,x)$ defined in  \eqref{eq:affine-map} is expressed as $(\langle \Theta_1, X \rangle + \theta_1^\top x,\;
    \langle \Theta_2, X \rangle + \theta_2^\top x)$. Let $J \subseteq \idxset{n}$ with $n_J = |J|$. We call $J$ a \emph{quadratic support} of $\cA_{\JR}$, if $\Theta_1$ and $\Theta_2$ are zero outside the subblock indexed by $J$, so that nonzero entries appear in the submatrices $(\Theta_1)_{J,J}$ and $(\Theta_2)_{J,J}$. The celebrated Shor's SDP relaxation \cite{shor1987quadratic} yields an extended formulation of the convex joint range, expressed as a spectrahedral shadow.

\begin{corollary}[Extended formulation of the convex joint range]
  \label{cor:sdp-lifted-joint-range}
Assume that $S_{\JR}$ is convex, and let $J$ be a quadratic support of $\cA_{\JR}$. Then,
  \begin{equation}
    \label{eq:sdp-lifted-joint-range}
    \clos(S_{\JR}) =
    \clos\left\{ \cA_{\JR}(X,x) \in \bR^2 :\;
    \begin{pmatrix} X_{J,J} & x_J \\ x_J^\top & 1 \end{pmatrix} \succeq 0,\;
    X \in \bS^n,\;  x \in \bR^n \right\}.
  \end{equation}
\end{corollary}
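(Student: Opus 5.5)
The plan is to prove the two inclusions by comparing support functions, using the convexity of $S_{\JR}$ and \Cref{lem:sigma-formula}. Write $\mathcal{R}$ for the set on the right of \eqref{eq:sdp-lifted-joint-range} before closure. Since only $X_{J,J}$ enters $\cA_{\JR}$ (because $J$ is a quadratic support, $\langle\Theta_i,X\rangle=\langle(\Theta_i)_{J,J},X_{J,J}\rangle$), the constraint really says $X_{J,J}\succeq x_Jx_J^\top$. Every entry of $X$ outside $J\times J$ is free and irrelevant.

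\textbf{Easy inclusion.} For any $x\in\bR^n$, take $X=xx^\top$. The moment matrix is then $(x_J;1)(x_J;1)^\top\succeq0$, and $\cA_{\JR}(xx^\top,x)=F(x)$. Hence $S_{\JR}\subseteq\mathcal{R}$, and so $\clos(S_{\JR})\subseteq\clos(\mathcal{R})$.

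\textbf{Reverse inclusion.} Both $\clos(S_{\JR})$ (closed convex by assumption) and $\clos(\mathcal{R})$ (the closure of a linear image of a convex set) are closed convex sets. It therefore suffices to show $\sup_{y\in\mathcal{R}}\mu^\top y\le\sigma(\mu)$ for every $\mu\in\bR^2$. Then $\mathcal{R}\subseteq\bigcap_\mu\{y:\mu^\top y\le\sigma(\mu)\}=\clos(\conv(S_{\JR}))=\clos(S_{\JR})$.

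Fix $\mu$. If $\sigma(\mu)=+\infty$ there is nothing to prove. Otherwise \Cref{lem:sigma-formula} gives $Q(\mu)\preceq0$ and $\varrho(\mu)\in\rang(Q(\mu))$, and \Cref{thm:sdp-separation} shows the matrix in \eqref{eq:sdp-domain} with $\zeta=\sigma(\mu)$ is PSD. For $(X,x)$ feasible for $\mathcal{R}$,
\[
\mu^\top\cA_{\JR}(X,x)=\langle Q(\mu),X\rangle+\varrho(\mu)^\top x=\sigma(\mu)-\Bigl\langle \begin{pmatrix}-Q(\mu)&\tfrac12\varrho(\mu)\\ \tfrac12\varrho(\mu)^\top&\sigma(\mu)\end{pmatrix}_{J\cup\{n+1\}},\begin{pmatrix}X_{J,J}&x_J\\x_J^\top&1\end{pmatrix}\Bigr\rangle .
\]
Since the inner product of two PSD matrices is nonnegative, this is at most $\sigma(\mu)$.

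\textbf{Main obstacle.} The delicate step is the block restriction: $Q(\mu)$ and $\varrho(\mu)$ must be supported on $J$. For $Q(\mu)$ this holds by the definition of a quadratic support. For $\varrho(\mu)$, however, the linear parts $\theta_i$ may have entries outside $J$. If $\varrho(\mu)_k\neq0$ for some $k\notin J$, then $\varrho(\mu)\notin\rang(Q(\mu))$, because the row and column of $Q(\mu)$ indexed by $k$ are zero; so $\sigma(\mu)=\infty$ and that $\mu$ imposes no constraint. Thus in the finite case $\varrho(\mu)$ is supported on $J$, and the principal submatrix of the PSD matrix \eqref{eq:sdp-domain} on $J\cup\{n+1\}$ is PSD, which justifies the displayed identity. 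The remaining subtleties are the closures: directions with $\sigma=\infty$ are harmless, and the intersection of supporting half-spaces equals the closed convex hull by standard support-function duality.
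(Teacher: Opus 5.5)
Your overall strategy is sound, but the displayed identity in the reverse inclusion has a sign error. Pair the certificate (off-diagonal block $+\tfrac12\varrho(\mu)$) with the moment matrix. This gives $-\langle Q(\mu),X\rangle+\varrho(\mu)_J^\top x_J+\sigma(\mu)$. So $\sigma(\mu)$ minus this inner product is $\langle Q(\mu),X\rangle-\varrho(\mu)^\top x$, not $\mu^\top\cA_{\JR}(X,x)=\langle Q(\mu),X\rangle+\varrho(\mu)^\top x$. For instance, with $Q(\mu)=-1$, $\varrho(\mu)=2$, $\sigma(\mu)=1$ and $X=x=1$, your left side is $1$ and your right side is $-3$.

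The repair is one line. Flip the sign of the off-diagonal blocks of the certificate, or pair it with $\bigl(\begin{smallmatrix}X_{J,J}&-x_J\\-x_J^\top&1\end{smallmatrix}\bigr)$ instead. Both are congruences by $\operatorname{diag}(I,-1)$, so they preserve positive semidefiniteness. With either choice the identity holds and $\mu^\top\cA_{\JR}(X,x)\le\sigma(\mu)$ follows.

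With that fix, your route is a correct alternative to the paper's. The paper argues on the primal side and, for $\mu\in\dom(\sigma)$, uses only $Q(\mu)\preceq0$:
\begin{itemize}
\item It writes $\langle Q(\mu),X\rangle=x_J^\top Q(\mu)_{J,J}x_J+\langle Q(\mu)_{J,J},X_{J,J}-x_Jx_J^\top\rangle$.
\item It drops the second term as nonpositive, because $X_{J,J}\succeq x_Jx_J^\top$.
\item It then bounds $x^\top Q(\mu)x+\varrho(\mu)^\top x\le\sigma(\mu)$ directly from the definition of $\sigma$ as a supremum.
\end{itemize}
The linear term is evaluated at the full point $x$, so entries of $\theta_i$ outside $J$ never matter. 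Your ``main obstacle'' therefore does not arise there. Neither the pseudoinverse formula of \Cref{lem:sigma-formula} nor the range condition is needed.

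Your dual argument instead exhibits an explicit PSD multiplier that certifies $\mu^\top y\le\sigma(\mu)$ from the local Shor constraint. This ties the corollary directly to \Cref{thm:sdp-separation}. It also makes the subsequent remark (joint-range inequalities are implied by Shor's relaxation) an explicit duality statement. The cost is the extra observation that $\varrho(\mu)$ vanishes off $J$ whenever $\sigma(\mu)<\infty$, which you supply correctly.
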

\begin{proof}
  For ``$\subseteq$'': if $y \in S_{\JR}$, choose $x$ with $F(x) = y$ and set $X_{J,J} = x_Jx_J^\top$; then $\begin{pmatrix} X_{J,J} & x_J \\ x_J^\top & 1 \end{pmatrix} \succeq 0$, so $S_{\JR}$ is contained in the SDP image. Taking closures gives ``$\subseteq$''.
  For ``$\supseteq$'': let $(X,x)$ satisfy $X_{J,J} \succeq x_Jx_J^\top$ and set $y = \cA_{\JR}(X,x)$. Since $S_{\JR}$ is convex, $\clos(S_{\JR}) = \{y : \mu^\top y \le \sigma(\mu),\; \forall\, \mu \in \dom(\sigma)\}$. Take any $\mu \in \dom(\sigma)$. It follows from \Cref{lem:sigma-formula}, $Q(\mu) \preceq 0$ that
  \begin{equation*}
      \begin{split}
           & \langle Q(\mu), X \rangle = \langle Q(\mu)_{J,J}, X_{J,J} \rangle \\
    = & \langle Q(\mu)_{J,J}, x_Jx_J^\top \rangle + \langle Q(\mu)_{J,J}, X_{J,J} - x_Jx_J^\top \rangle
    \le x_J^\top Q(\mu)_{J,J}\, x_J.
      \end{split}
  \end{equation*}
 As $ x_J^\top Q(\mu)_{J,J}\, x_J =  x^\top Q(\mu)\, x$,
  \[
    \mu^\top y
    = \langle Q(\mu), X \rangle + \varrho(\mu)^\top x
    \le x^\top Q(\mu)\, x + \varrho(\mu)^\top x
    \le \sigma(\mu),
  \]
  where the last inequality holds by definition~\eqref{eq:sigma_support}. Since this is valid for every $\mu \in \dom(\sigma)$, we conclude $y \in \clos(S_{\JR})$. Thus the SDP image and its closure are contained in the closed set $\clos(S_{\JR})$, and ``$\supseteq$'' follows.
\end{proof}

\begin{remark}[A sparse outer approximation of Shor's SDP relaxation]
  The joint-range inequalities \eqref{eq:lifted-joint-range-inequality} obtained from the convex joint range are implied by Shor's SDP relaxation of QCQP.
  If the quadratic support $J$ given by the two base inequalities \eqref{eq:base-cut} has a small cardinality $n_J$, then the PSD constraint in \eqref{eq:sdp-lifted-joint-range} involves only a local block of order $n_J+1$. In addition, the lifted joint-range inequality \eqref{eq:lifted-joint-range-inequality} has the same quadratic support $J$ as $\Theta_1$ and $\Theta_2$: it is a linear combination (aggregation) of sparse expressions $\langle \Theta_1, X \rangle + \theta_1^\top x $ and $
    \langle \Theta_2, X \rangle + \theta_2^\top x$ weighted by $\mu$ with the right-hand side $\sigma(\mu)$. Thus, the proposed joint-range inequalities are sparsity-preserving, i.e., the inequalities inherit the sparsity of the base inequalities.
\end{remark}

\section{Secant Mixed-Joint-Range Inequalities}\label{sec:MIR}
This section develops the mixed-joint-range version of the same project-then-lift pipeline. As before, the goal is to derive a cut from a two-dimensional
projected geometry and then lift it back to the extended QCQP formulation. The difference is that we first extract suitable mixed terms from the base inequalities so that the
remaining two-dimensional core pair produces a nonconvex parabolic joint range.

This construction targets the nontrivial nonconvex cases ($\Delta<0$) in \Cref{alg:joint-range}; the punctured nonconvex cases with $\Delta=0$ are not covered by the
parabolic bowl construction below. As with the passage from Chvátal to MIR inequalities, the purpose of the mixed terms is to absorb problematic
quadratic or linear contributions from the base inequalities, leaving a core pair that satisfies the parabolic geometric conditions. The resulting
secant  mixed-joint-range inequality is then obtained by applying the same projected intersection-cut logic to that core geometry as in
\Cref{rem:parabolic-secant-intersection-cuts}.

Given the two base inequalities~\eqref{eq:base-cut},
suppose that we identify two additional affine inequalities in the lifted variables represented by the ``mixed'' slack variables $y_3,y_4$:
\begin{equation}
  y_3 \doteq \phi_3 + \langle \Theta_3, X \rangle + \theta_3^\top x \ge 0, \qquad
  y_4 \doteq \phi_4 + \langle \Theta_4, X \rangle + \theta_4^\top x \ge 0,
\end{equation}
and extract them from the base inequalities~\eqref{eq:base-cut} to define the core terms
\begin{equation}
  \begin{aligned}
    y_1 & \doteq \langle \Theta_1, X \rangle + \theta_1^\top x - a_{13}y_3 - a_{14}y_4, \\
    y_2 & \doteq \langle \Theta_2, X \rangle + \theta_2^\top x - a_{23}y_3 - a_{24}y_4.
  \end{aligned}
\end{equation}
The base inequalities~\eqref{eq:base-cut} are thus equivalent to
\begin{equation}
  \label{eq:mixed-base-rewrite}
  \begin{aligned}
    y_1 + a_{13}y_3 + a_{14}y_4 + \phi_1 & \ge 0, \\
    y_2 + a_{23}y_3 + a_{24}y_4 + \phi_2 & \ge 0.
  \end{aligned}
\end{equation}
We assume that coefficients $a_{13},a_{14},a_{23},a_{24} \ge 0$. (If a coefficient were negative, we could simply drop the term to relax the base inequality, as in the MIR generation heuristic \cite{marchand2001aggregation}).

Let $K_{\MP}$ denote the four-dimensional cone in the $(y_1,y_2,y_3,y_4)$ variables defined by \eqref{eq:mixed-base-rewrite} together with $y_3,y_4\ge0$. Its apex is $y'=(-\phi_1,-\phi_2,0,0)$, and the four defining inequalities yield the four extreme rays used below. In this coordinate system, $K_{\MP}$ is simplicial.

Define the affine mapping $\cA_{\MP}(X,x) \doteq (y_1,y_2,y_3,y_4)$. To generate an intersection cut, we require the core terms to produce a suitable nonconvex geometry in the $(y_1,y_2)$-plane. Letting $\pi_{12}(y) \doteq (y_1,y_2)$ denote the projection onto the first two coordinates, we assume:

\begin{enumerate}[label=(A\arabic*)]
  \item \textbf{Parabolic nonconvex core range:} The extracted core pair defines a two-dimensional joint range
        \[
          S_{\MP} \doteq \bigl\{ \pi_{12}(\cA_{\MP}(xx^\top, x)) : x \in \bR^n \bigr\} \subseteq \bR^2,
        \]
        where $\pi_{12}$ projects the four-dimensional image $\cA_{\MP}(xx^\top,x)$ onto its first two coordinates. We assume that this two-dimensional core range falls into one of the parabolic  ($\Delta<0$) cases in \Cref{alg:joint-range}. Specifically, there exists an invertible
        affine map $T:\bR^2\to\bR^2$ and a closed, full-dimensional convex
        parabolic bowl $C_{\Par}\subseteq\bR^2$ such that, with
        $\widehat C\doteq T(C_{\Par})$, one has
        \[
          S_{\MP}\in\{\bd(\widehat C),\clos(\widehat C^c)\}.
        \]
        Constant terms in the extracted core functions are absorbed into this affine translation before applying the parabolic classification.
  \item \textbf{Interior apex:} The apex $y' \doteq (-\phi_1,-\phi_2,0,0)$ of $K_{\MP}$ projects to the interior of $\widehat C$, i.e., $y'_{12} \doteq (-\phi_1,-\phi_2) \in \inter(\widehat C)$.
\end{enumerate}

These assumptions immediately establish a valid separation framework in $\bR^4$.
\begin{lemma}[Free-set and apex conditions]
  \label{lem:free-set-apex-condition}
  Under (A1) and (A2), the cylinder $C_{\MP} \doteq \widehat C \times \bR^2$ is an $(S_{\MP} \times \bR^2)$-free set, and the apex $y'$ of $K_{\MP}$ lies in $\inter(C_{\MP})$.
\end{lemma}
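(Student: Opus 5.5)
The proof has two parts: free-set verification and apex location. Both reduce to elementary facts about products of sets.

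\textbf{Free-set property.} First, $C_{\MP}=\widehat C\times\bR^2$ is closed, convex and full-dimensional, because $\widehat C=T(C_{\Par})$ is the image of a closed, full-dimensional convex bowl under an invertible affine map. Next, interiors of products of open-closed pairs factor, so $\inter(C_{\MP})=\inter(\widehat C)\times\bR^2$. The intersection with $S_{\MP}\times\bR^2$ therefore equals $(\inter(\widehat C)\cap S_{\MP})\times\bR^2$. It suffices to show $\inter(\widehat C)\cap S_{\MP}=\varnothing$. Since $T$ is an affine homeomorphism, $\inter(\widehat C)=T(\inter(C_{\Par}))$, $\bd(\widehat C)=T(\bd(C_{\Par}))$ and $\clos(\widehat C^c)=T(\clos(C_{\Par}^c))$. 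By (A1), $S_{\MP}$ is one of the latter two sets. The claim thus reduces to canonical coordinates: $\inter(C_{\Par})=\{q<0\}$ misses both $\{q=0\}$ and $\{q\ge 0\}$. Here one checks that $\clos(C_{\Par}^c)=\{q\ge0\}$, which holds because $q$ is continuous and has no local minima with value $0$; for instance $q$ is strictly increasing in $\relx{y}_1$.

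\textbf{Apex location.} The apex is $y'=(-\phi_1,-\phi_2,0,0)$. By (A2) we have $\pi_{12}(y')\in\inter(\widehat C)$, and trivially $(0,0)\in\bR^2$. Hence $y'\in\inter(\widehat C)\times\bR^2=\inter(C_{\MP})$.

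\textbf{Expected obstacle.} The only mildly delicate point is the interior/closure bookkeeping. Specifically, one must justify $\inter(\widehat C)=T(\inter C_{\Par})$ and identify $\inter(C_{\Par})$ with $\{q<0\}$. This last identification uses strict monotonicity of $q$ in $\relx{y}_1$: any point with $q=0$ has points with $q>0$ arbitrarily close, so it is not interior.
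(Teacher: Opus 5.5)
Your proof is correct and follows the paper's argument: $\inter(C_{\MP})=\inter(\widehat C)\times\bR^2$, (A1) makes $S_{\MP}$ disjoint from $\inter(\widehat C)$, and (A2) places the apex in the interior. The detour through canonical coordinates and the bowl function $q$ is unnecessary, because $\inter(A)\cap\bd(A)=\varnothing$ and $\inter(A)\cap\clos(A^c)=\varnothing$ hold for every set $A$, which is all the paper uses; also, your phrase ``no local minima with value $0$'' should read ``no local maxima'' (your strict-monotonicity justification covers this anyway).
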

\begin{proof}
  By (A1), $S_{\MP}$ is either $\bd(\widehat C)$ or $\clos(\widehat C^c)$, so
  $S_{\MP} \cap \inter(\widehat C) = \varnothing$. Thus
  $(S_{\MP} \times \bR^2) \cap \inter(C_{\MP}) = \varnothing$, making $C_{\MP}$ an
  $(S_{\MP} \times \bR^2)$-free set. Since
  $y'_{12} \in \inter(\widehat C)$ by (A2), we have $y' \in \inter(C_{\MP})$.
\end{proof}

We can now construct the \emph{secant  mixed-joint-range inequality}, following the template of deriving MIR inequalities as intersection cuts in \Cref{rem:mir-as-lifted-intersection-cut}. The four inequalities defining $K_{\MP}$ naturally define slack variables $\eta \ge 0$:
\begin{equation}
  \label{eq:mixed-slack}
  \begin{aligned}
    \eta_1 & \doteq y_1 + a_{13}y_3 + a_{14}y_4 + \phi_1, \\
    \eta_2 & \doteq y_2 + a_{23}y_3 + a_{24}y_4 + \phi_2, \\
    \eta_3 & \doteq y_3,                                  \\
    \eta_4 & \doteq y_4.
  \end{aligned}
\end{equation}
The extreme rays $r_j$ of $K_{\MP}$ correspond to setting $\eta_j > 0$ whereas keeping the other slacks at zero. Inverting the slack definition~\eqref{eq:mixed-slack} yields the ray directions in the $y$-space:
\[
  \begin{aligned}
    r_1 & = e_1,                                  & \quad
    r_2 & = e_2,                                          \\
    r_3 & = (-a_{13},\, -a_{23},\, 1,\, 0)^\top,  & \quad
    r_4 & = (-a_{14},\, -a_{24},\, 0,\, 1)^\top .
  \end{aligned}
\]
The step length $\eta^*_j$ along each ray $r_j$ before exiting $C_{\MP}$ depends only on the $(y_1,y_2)$ coordinates, because $C_{\MP} = \widehat C \times \bR^2$. We compute
\begin{equation}
  \label{eq:mixed-step-length}
  \eta^*_j \doteq \sup\bigl\{\eta_j \ge 0 : \pi_{12}(y') + \eta_j\, \pi_{12}(r_j) \in \widehat C\bigr\}.
\end{equation}
For the core rays $j=1,2$, the projected directions are $e_1$ and $e_2$, matching the basic 2D parabolic case. For the mixed rays $j=3,4$, the projected directions are $(-a_{13},-a_{23})^\top$ and $(-a_{14},-a_{24})^\top$.
Then, we can construct the projected intersection cut in the four-dimensional image space:
\begin{equation}
  \label{eq:mixed-intersection-cut}
  \sum_{j=1}^{4} \frac{\eta_j}{\eta^*_j} \ge 1.
\end{equation}
As in the standard intersection-cut convention, we set $1/\eta_j^*=0$ when $\eta_j^*=\infty$.

At $y = \cA_{\MP}(X,x)$, the extracted terms cancel in the first two slack coordinates:
\begin{equation}
  \begin{aligned}
    \eta_1 & = y_1 + a_{13}y_3 + a_{14}y_4 + \phi_1
    = \langle \Theta_1, X \rangle + \theta_1^\top x + \phi_1,                \\
    \eta_2 & = y_2 + a_{23}y_3 + a_{24}y_4 + \phi_2
    = \langle \Theta_2, X \rangle + \theta_2^\top x + \phi_2,                \\
    \eta_3 & = y_3 = \phi_3 + \langle \Theta_3, X \rangle + \theta_3^\top x, \\
    \eta_4 & = y_4 = \phi_4 + \langle \Theta_4, X \rangle + \theta_4^\top x.
  \end{aligned}
\end{equation}
Substituting these slack identities into~\eqref{eq:mixed-intersection-cut} yields the explicit lifted intersection cut valid for QCQP:
\begin{equation}
  \label{eq:mixed-lifted-intersection-cut}
  \begin{split}
     & \frac{\langle \Theta_1, X \rangle + \theta_1^\top x + \phi_1}{\eta^*_1} +
    \frac{\langle \Theta_2, X \rangle + \theta_2^\top x + \phi_2}{\eta^*_2}               \\
     & \qquad + \frac{\phi_3 + \langle \Theta_3, X \rangle + \theta_3^\top x}{\eta^*_3} +
    \frac{\phi_4 + \langle \Theta_4, X \rangle + \theta_4^\top x}{\eta^*_4} \ge 1.
  \end{split}
\end{equation}
We call the lifted intersection cut~\eqref{eq:mixed-lifted-intersection-cut} a secant  mixed-joint-range inequality.

\begin{theorem}[Secant mixed-joint-range inequality]
  \label{thm:mixed-joint-range-secant}
  Under the above construction, with \(y_3,y_4\) valid nonnegative slacks, \(a_{13},a_{14},a_{23},a_{24}\ge0\), and assumptions (A1) and (A2), the projected intersection cut~\eqref{eq:mixed-intersection-cut}
  is a valid inequality for
  \[
    \clos(\conv(K_{\MP} \setminus \inter(C_{\MP}))),
  \]
  and the secant  mixed-joint-range inequality~\eqref{eq:mixed-lifted-intersection-cut} is valid for QCQP.
\end{theorem}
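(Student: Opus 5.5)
The proof has two parts. First I show that \eqref{eq:mixed-intersection-cut} is valid for $\clos(\conv(K_{\MP}\setminus\inter(C_{\MP})))$. Then I use \Cref{thm:lifted-intersection-cut} to lift it to the QCQP.

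\textbf{Step 1: validity in image space.} By \Cref{lem:free-set-apex-condition}, $C_{\MP}$ is a closed convex $(S_{\MP}\times\bR^2)$-free set with $y'\in\inter(C_{\MP})$. The cone $K_{\MP}$ is simplicial with slacks $\eta=\eta(y)\ge0$ from \eqref{eq:mixed-slack}, which is an invertible linear change of variables with $y=y'+\sum_j\eta_j r_j$. I argue directly by contradiction. Suppose some $y\in K_{\MP}$ has $\sum_j\eta_j/\eta_j^*<1$. Let $J_f=\{j:\eta_j^*<\infty\}$; finiteness is guaranteed when the step length is bounded. For $j\in J_f$, the point $y'+\eta_j^* r_j$ lies in $C_{\MP}$ because $C_{\MP}$ is closed. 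For $j\notin J_f$, the whole ray $y'+\mathbb{R}_+r_j$ lies in $C_{\MP}$. Write
\[
y=\sum_{j\in J_f}\frac{\eta_j}{\eta_j^*}\bigl(y'+\eta_j^*r_j\bigr)+\Bigl(1-\sum_{j\in J_f}\frac{\eta_j}{\eta_j^*}\Bigr)y'+\sum_{j\notin J_f}\eta_j r_j .
\]
The first two groups form a convex combination in which $y'$ has strictly positive weight. Since $y'\in\inter(C_{\MP})$, this combination lies in $\inter(C_{\MP})$. Each direction $r_j$ with $j\notin J_f$ is a recession direction of the closed convex set $C_{\MP}$: the ray from the point $y'$ stays in $C_{\MP}$. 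Adding recession directions to an interior point keeps it in the interior. Hence $y\in\inter(C_{\MP})$.

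This shows $K_{\MP}\setminus\inter(C_{\MP})\subseteq\{y:\sum_j\eta_j(y)/\eta_j^*\ge1\}$. The right-hand set is a closed half-space, because each $\eta_j$ is affine in $y$, so it contains the closed convex hull as well. The only real subtlety is the treatment of infinite step lengths. That is handled by the recession-direction fact and is where I expect the care to be needed.

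Because $C_{\MP}=\widehat C\times\bR^2$, the step lengths reduce to the planar computation \eqref{eq:mixed-step-length}. This matters only for evaluating $\eta^*_j$, not for validity.

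\textbf{Step 2: lifting.} Let $\cX$ be the QCQP-feasible set in $(X,x)$-space. For $(X,x)\in\cX$, the sparse lifting gives $\pi_{12}(\cA_{\MP}(X,x))=\pi_{12}(\cA_{\MP}(xx^\top,x))\in S_{\MP}$. This holds because all $\Theta_i$ lie in $\bS^n_{\cE}$, so each $\langle\Theta_i,X\rangle$ depends only on $X_{\cE}=(xx^\top)_{\cE}$. Therefore $\cA_{\MP}(\cX)\subseteq S_{\MP}\times\bR^2$.

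The inequalities \eqref{eq:mixed-base-rewrite} are the base inequalities \eqref{eq:base-cut}, which are valid for $\cX$. The conditions $y_3,y_4\ge0$ are valid by assumption. Together these give $\cA_{\MP}(\cX)\subseteq K_{\MP}$.

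Applying \Cref{thm:lifted-intersection-cut} with $S=S_{\MP}\times\bR^2$, $C=C_{\MP}$ (full-dimensional, since $\widehat C$ is), and $K=K_{\MP}$ then gives validity of the cut composed with $\cA_{\MP}$. By the slack identities computed before the theorem, this composed cut is exactly \eqref{eq:mixed-lifted-intersection-cut}.

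The nonnegativity of the $a_{ij}$ is used only to make the rewriting \eqref{eq:mixed-base-rewrite} a relaxation of the base inequalities, so that $K_{\MP}$ is a valid outer cone.
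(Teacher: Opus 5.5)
Your proposal is correct and follows the paper's proof. The paper's proof has the same structure: the free-set/apex lemma, validity of the standard intersection cut for $\clos(\conv(K_{\MP}\setminus\inter(C_{\MP})))$, and pullback via \Cref{thm:lifted-intersection-cut}. The only difference is that you prove the intersection-cut validity directly, including the infinite step lengths via recession directions, whereas the paper simply invokes the standard construction.

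One small correction to your closing remark: \eqref{eq:mixed-base-rewrite} holds as an identity for any signs of $a_{13},a_{14},a_{23},a_{24}$. Their nonnegativity is therefore not actually used anywhere in the validity argument.
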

\begin{proof}
  By \Cref{lem:free-set-apex-condition}, $C_{\MP}$ is $(S_{\MP} \times \bR^2)$-free and contains the apex $y'$ in its interior. Applying the standard intersection cut construction to the cone $K_{\MP} = \{y : \eta(y) \ge 0\}$ and the set $C_{\MP}$ establishes the validity of \eqref{eq:mixed-intersection-cut} for $\clos(\conv(K_{\MP} \setminus \inter(C_{\MP})))$.

  For any feasible QCQP solution, its lifted point $(X,x)$ satisfies $X_{\cE}=(xx^\top)_{\cE}$, so the mapping $\cA_{\MP}$ ensures $(y_1,y_2) \in S_{\MP}$ and therefore $\cA_{\MP}(X,x) \in S_{\MP} \times \bR^2$. Furthermore, the inequalities defining $K_{\MP}$ are valid for QCQP, so $\cA_{\MP}(X,x) \in K_{\MP}$. Thus, $\cA_{\MP}(X,x) \in (S_{\MP} \times \bR^2) \cap K_{\MP}$. Applying \Cref{thm:lifted-intersection-cut} gives the pullback validity of~\eqref{eq:mixed-lifted-intersection-cut}.
\end{proof}

\begin{remark}[Generalization]
  \label{rem:mixed-generalization}
  The construction of secant mixed-joint-range  inequalities is not limited to two mixed slack variables. It extends directly to $k-2$ additional valid inequalities by introducing slack variables $y_3,\dots,y_k$, extracting nonnegative combinations of these slacks from the two base inequalities, and constructing the intersection cut in the resulting $k$-dimensional image space. The lifted inequality is valid for QCQPs by the same argument as above. One may also introduce only one additional slack variable
  \[
    y_3 \doteq \phi_3+\langle\Theta_3,X\rangle+\theta_3^\top x\ge 0
  \]
  and choose nonnegative multipliers $a_{13},a_{23}\ge0$ to define the residual core pair
  \[
    \begin{aligned}
      y_1 & \doteq \langle \Theta_1-a_{13}\Theta_3,X\rangle
      +(\theta_1-a_{13}\theta_3)^\top x
      -a_{13}\phi_3,                                        \\
      y_2 & \doteq \langle \Theta_2-a_{23}\Theta_3,X\rangle
      +(\theta_2-a_{23}\theta_3)^\top x
      -a_{23}\phi_3.
    \end{aligned}
  \]
  Equivalently, when $X_{\cE}=(xx^\top)_{\cE}$, the residual quadratic functions corresponding to $y_i$ are
  \[
    x^\top \Theta_i^{\rm res}x+(\theta_i^{\rm res})^\top x-a_{i3}\phi_3,
    \qquad
    \Theta_i^{\rm res}\doteq \Theta_i-a_{i3}\Theta_3,\quad
    \theta_i^{\rm res}\doteq \theta_i-a_{i3}\theta_3,\quad i=1,2.
  \]
  The constants \(-a_{i3}\phi_3\) translate the residual two-dimensional image and are absorbed into the affine offset before applying the parabolic classification.
  The slack $y_3$ is selected so that the residual quadratic parts $\Theta_1^{\rm res}$ and $\Theta_2^{\rm res}$ are linearly dependent and the residual linear parts $\theta_1^{\rm res},\theta_2^{\rm res}$ satisfy the parabolic classification conditions in Algorithm~\ref{alg:joint-range}.

\end{remark}

\section{Numerical Experiments}\label{sec:experimental}

A systematic study of  joint-range  inequalities inside a full MINLP solver is beyond the scope of this paper. Instead, we report preliminary geometric experiments that use projected area as a proxy for relaxation strength~\cite{lee2007mixed,lee1994geometric,speakman2017quantifying}.

We use the notation introduced in \eqref{eq:base-cut}, \eqref{eq:affine-map}, \eqref{eq.quadmap}, and \eqref{eq:KJR}. Thus $F$ is the quadratic map in \eqref{eq.quadmap}, $y=\cA_{\JR}(X,x)$ is the affine image in \eqref{eq:affine-map}, and the two base inequalities \eqref{eq:base-cut} are represented in the image space by the simplicial cone $K_{\JR}$ in \eqref{eq:KJR}. The standard first-level box RLT relaxation for $X_{\cE}=(xx^\top)_{\cE}$ on $[0,1]^n$ is given by:
\begin{align*}
  0 & \le x_i \le 1
    &                    & \forall i = 1,\ldots,n,
  \\
  0 & \le X_{ij} \le x_i
    &                    & \forall 1 \le i \le j \le n,
  \\
  x_i + x_j - 1
    & \le X_{ij} \le x_j
    &                    & \forall 1 \le i \le j \le n.
\end{align*}
We denote this polyhedron by $\mathcal R_{\rm RLT}$. Its projection to the two-dimensional image space, restricted by the base inequalities, is
\[
  H_{\rm RLT}
  = \left\{y\in\bR^2:
  \begin{array}{l}
    \exists (X,x)\in\mathcal R_{\rm RLT}, \\
    y=\cA_{\JR}(X,x),                    \\
    y\in K_{\JR}
  \end{array}
  \right\}.
\]
Equivalently, the last condition is $\phi_1+y_1\ge0$ and $\phi_2+y_2\ge0$.
The experiments evaluate how much projected RLT area remains after adding the joint-range hull description induced by the two base inequalities. We therefore compare the projected RLT relaxation $H_{\rm RLT}$ with the joint-range hull $H_{\JR} \doteq \clos(\conv(F(\bR^n)\cap K_{\JR}))$ through their overlap:
\[
  \rho =
  \frac{\operatorname{area}(H_{\JR}\cap H_{\rm RLT})}
  {\operatorname{area}(H_{\rm RLT})}.
\]
Thus $\rho=1$ means that intersecting with the joint-range hull leaves the projected RLT area unchanged, whereas smaller values indicate a smaller remaining overlap.

\subsection{Experiment Setup}

All experiments were run on a Linux machine with an AMD Ryzen 7 8845H processor and 32 GB of RAM. The implementation used Python 3.11, Shapely for area computations, CVXPY~\cite{diamond2016cvxpy} with SCS~\cite{o2016conic} for SDPs, and SCIP~\cite{hojny2025scip} through PySCIPOpt~\cite{maher2016pyscipopt} for LPs.

\subsection{Geometric Cases}

The columns of Table~\ref{tab:mccormick-ratio} correspond to the geometric cases of the joint-range hull $H_{\JR}$ studied in the previous sections. Since the theoretical characterizations of the convex hull in the nonconvex cases are derived in the canonical coordinate system (denoted by tildes), in
  Cases~1--6 we express the joint-range hull as $H_{\JR} = D(\relx{H}_{\JR} + c)$, where $\relx{H}_{\JR}$ takes one of the following canonical forms; the
convex Case~7 is stated directly in the original coordinates. The first two cases come from the containment configuration $C_{\Par}\subseteq \relx{K}_{\JR}$, where $\relx{K}_{\JR}$ is the transformed simplicial cone, in \Cref{thm:conv-hull-contained-bowl}; the next four come from the interior-apex configuration $q(\relx{y}')<0$ in \Cref{thm:conv-hull-interior-apex}; the last case is the convex joint-range case, in which the joint-range hull equals the intersection of the spectrahedral shadow of \Cref{cor:sdp-lifted-joint-range} with the cone $K_{\JR}$.

\begin{itemize}
  \item \textbf{Case 1, Unclipped Parabolic Bowl}: $C_{\Par}\subseteq \relx{K}_{\JR}$ and $\relx{S}_{\JR}=\bd(C_{\Par})$. Then
        \[
          \relx{H}_{\JR} \;= \mathcal{P}_{\text{full}} \doteq  C_{\Par}.
        \]
  \item \textbf{Case 2, Unclipped Polyhedral Cone}: $C_{\Par}\subseteq \relx{K}_{\JR}$ and $\relx{S}_{\JR}=\clos(C_{\Par}^c)$. Then
        \[
          \relx{H}_{\JR}  =   \mathcal{K}_{\text{full}} \doteq \relx{K}_{\JR}.
        \]
  \item \textbf{Case 3, Chord-Truncated Bowl}: $q(\relx{y}')<0$ and both boundary rays of $\relx{K}_{\JR}$ meet $\bd(C_{\Par})$ at points $\relx{v}_1,\relx{v}_2$, with $\relx{S}_{\JR}=\bd(C_{\Par})$. Then
        \[
          \relx{H}_{\JR}  =    \mathcal{P}_{\text{trunc}} \doteq  C_{\Par}\cap H_{12}.
        \]
  \item \textbf{Case 4, Chord-Truncated Cone}: same configuration as the previous case but with $\relx{S}_{\JR}=\clos(C_{\Par}^c)$. Then
        \[
          \relx{H}_{\JR}  =    \mathcal{K}_{\text{trunc}} \doteq  \relx{K}_{\JR}\cap H_{12}.
        \]
  \item \textbf{Case 5, Ray-Truncated Bowl}: $q(\relx{y}')<0$, one boundary ray is a recession ray contained in $C_{\Par}$, the other meets $\bd(C_{\Par})$ at a unique point $\relx{v}$, with $\relx{S}_{\JR}=\bd(C_{\Par})$. Then
        \[
          \relx{H}_{\JR}  =   \mathcal{P}_{\text{ray}} \doteq  C_{\Par}\cap H_{\relx{v}}.
        \]
  \item \textbf{Case 6, Ray-Truncated Cone}: same configuration as the previous case but with $\relx{S}_{\JR}=\clos(C_{\Par}^c)$. Then
        \[
          \relx{H}_{\JR}  =     \mathcal{K}_{\text{ray}} \doteq \relx{K}_{\JR}\cap H_{\relx{v}}.
        \]
   \item \textbf{Case 7, Convex Spectrahedral Shadow}: $S_{\JR}$ is convex, and its closure is described by the SDP image in \Cref{cor:sdp-lifted-joint-range}. Let the intersection of the SDP image with $K_{\JR}$ be $\mathcal{S}_{\text{shadow}}$. We have that
        $
            \clos(\mathcal{S}_{\text{shadow}}) = \clos(H_{\JR}).$
\end{itemize}

\subsection{Data Generation and Area Computation}

For each $n\in\{3,4,5,6,7,8\}$ and each case, we sample ten accepted instances. The nonconvex Cases~1--6 are generated from the canonical parabolic forms in Appendix~\ref{sec:app-canonical-transformation}; Case~7 is generated separately from linearly independent positive definite forms. The random generation scheme and numerical approximation parameters are summarized in Appendix~\ref{sec:app-experiment-generation}.

\medskip
\noindent\textbf{Numerical evaluation.}
The projected RLT set $H_{\rm RLT}$ and, in Case~7, the SDP representation of $\clos(S_{\JR})$ together with the cone constraints, are approximated by support-function sampling: for each direction in the $(y_1,y_2)$ plane, an LP over $\mathcal{R}_{\rm RLT}$ (or an SDP) is solved, and the resulting support points are convexified. Polygon intersections and areas are computed with Shapely.

\subsection{Results and Analysis}

For each pair of dimension and case, we report the shifted geometric mean over ten accepted random instances,
\[
  \exp\left(\frac{1}{10}\sum_{k=1}^{10}\log(1+\rho_k)\right)-1.
\]
The shift by one makes the statistic well defined when some sampled ratios are zero.
All random sampling uses fixed base seed $1$; each accepted instance records the deterministic attempt seed used to initialize its generator.

\begin{table}[!htbp]
  \centering
  \caption{Shifted geometric mean of the area ratio $\rho$ over ten random instances for each dimension and geometric case.}
  \label{tab:mccormick-ratio}
  \begin{tabular}{rrrrrrrr}
\toprule
$n$ & 1 & 2 & 3 & 4 & 5 & 6 & 7 \\
\midrule
3 & 0.7581 & 1.0000 & 0.4895 & 0.7168 & 0.3922 & 0.7841 & 0.3298 \\
4 & 0.8503 & 1.0000 & 0.4259 & 0.6636 & 0.4211 & 0.6014 & 0.2821 \\
5 & 0.8479 & 1.0000 & 0.1697 & 0.7036 & 0.2684 & 0.2919 & 0.2185 \\
6 & 0.8094 & 1.0000 & 0.3306 & 0.8343 & 0.3047 & 0.3364 & 0.2335 \\
7 & 0.8286 & 1.0000 & 0.1800 & 0.6913 & 0.3522 & 0.1739 & 0.4031 \\
8 & 0.8434 & 1.0000 & 0.2038 & 0.7345 & 0.2492 & 0.1082 & 0.2206 \\
\bottomrule
\end{tabular}

\end{table}

Table~\ref{tab:mccormick-ratio} reveals three main patterns. First, Case~2 (Unclipped Polyhedral Cone) always has ratio one, as expected: in this containment regime the joint-range hull coincides with the simplicial cone already enforced in $H_{\rm RLT}$. Second, Case~1 (Unclipped Parabolic Bowl) gives ratios between $0.758$ and $0.850$, corresponding to about $15\%$--$24\%$ projected area removal. Third, the interior-apex and convex joint-range cases are substantially stronger: excluding the trivial Case~2, the shifted-geometric-mean ratios range from $0.108$ to $0.850$, with mean $0.474$ across the case--dimension pairs.

The smallest average overlap occurs in the interior-apex boundary cases and the convex case. Case~3 has ratios $0.170$--$0.490$, Case~5 has ratios $0.249$--$0.421$, and Case~7 has ratios $0.219$--$0.403$. The solid interior-apex cases are more orientation-sensitive: Case~4 remains moderate, with ratios $0.664$--$0.834$, whereas Case~6 ranges from $0.108$ to $0.784$. These computations are approximate because both the RLT projection and, in Case~7, the SDP projection are represented by finitely many support samples, but they support the geometric conclusion that adding the joint-range hull can substantially reduce the projected RLT overlap.

\section{Conclusion and Future Work}\label{sec:conclusion}

We proposed a project-then-lift approach for deriving sparse cutting planes for QCQPs from pairs of base valid inequalities. The derivation procedure starts from two base inequalities, maps them into a two-dimensional image space, characterizes the projected hull generated by the corresponding joint-range geometry, and then lifts the resulting inequalities back to the original formulation. Within this procedure, we introduced joint-range inequalities and secant mixed-joint-range inequalities as concrete cut families for QCQP. In particular, the secant operation is analogous to the simple rounding step used in deriving MIR inequalities, which can also be obtained through the project-then-lift procedure.

The main benefit of the project-then-lift approach  is that it separates geometric strength from ambient dimension. The hard geometric step is handled in two dimensions, where the projected hull admits closed-form descriptions in the nonconvex parabolic cases and support-function or SDP descriptions in the convex case, whereas the final inequalities remain sparse because their support is inherited from the selected base inequalities. For the nonconvex joint range, no SDP relaxation is required; for the convex joint range, this also yields a sparse route to linearize Shor's SDP relaxation; moreover, in the nonconvex case, the framework exposes geometric structure that is inaccessible to approaches based only on hidden convexity.

A primary direction for future research is to automate the first step of the framework: selecting two effective base inequalities for a general QCQP instance. This could be approached through aggregation-based separation heuristics akin to those developed for MIR inequalities~\cite{marchand2001aggregation} or recent studies on sparsity-driven aggregation of MILPs ~\cite{xu2025sparsity} and aggregation of QCQPs~\cite{dey2022obtaining}, but our results suggest that the deeper question is not aggregation alone, but rather which pair of inequalities produces the most useful two-dimensional projected geometry. One could expose convex geometry \cite{dey2022obtaining} and find a viable linear outer approximation of sparse SDP relaxation. To expose the underlying nonconvex geometry, one must ``align'' the coefficients of two quadratic functions so that quadratic parts become linearly dependent, together with satisfying several additional conditions. While such conditions rarely arise naturally, they can often be induced through carefully designed heuristics. A similar phenomenon occurs in the generation of MIR inequalities: the derivation formally requires integral coefficients (see \Cref{lem:alternative-mir}), a pattern that is uncommon in general MILPs. Nevertheless, aggregation heuristics provide an elegant and highly effective mechanism for uncovering such uncommon patterns, for example through coefficient scaling and  quantization. We refer the reader to the highly optimized heuristics implementations of MIR separation developed in, e.g., \cite{christophel2009separation,gonccalves2005implementation,wolter2006implementation}, which demonstrate the significant impact of MIR inequalities and the project-then-lift approach \cite{marchand2001aggregation} on solving general MILPs.

\ifthenelse{\boolean{snjnl}}{\backmatter}{}



\section*{Statements and Declarations}

\begin{itemize}
  \item \textbf{Funding}: This research was partially supported by Research Campus MODAL, funded by the Federal
        Ministry of Research, Technology and Space (BMFTR) (fund numbers 05M14ZAM, 05M20ZBM).
  \item \textbf{Competing Interests}: The authors declare no competing interests.
  \item \textbf{Use of AI tools}: AI tools were used for coding assistance, numerical verification of theorem statements, and Lean-based verification of proofs. The authors reviewed and validated all mathematical statements, proofs, code, computations, and final manuscript text, and take full responsibility for the content of this work.
  \item \textbf{Data availability}: Data and source code are available on \url{https://github.com/lidingxu/Projected-Parabolic-Hull-Area-Computation.git}.
\end{itemize}

\ifthenelse{\boolean{snjnl}}{}{%
  \bibliographystyle{plain}%
}
\bibliography{sn-bibliography}

\appendix

\section{Joint Range of Two Quadratic Functions}
\label{appendix.a}

Consider a pair of inhomogeneous quadratic functions $f_1, f_2 : \bR^n \to \bR$ defined by
\begin{align*}
  f_1(x) & = x^\top \Theta_1 x + \theta_1^\top x, \\
  f_2(x) & = x^\top \Theta_2 x + \theta_2^\top x,
\end{align*}
where $\Theta_1, \Theta_2$ are real symmetric matrices of order $n$ and $\theta_1, \theta_2 \in \bR^n$. We denote the quadratic map by $F : \bR^n \to \bR^2$, $F(x) \doteq (f_1(x), f_2(x))^\top$. This map decomposes into its homogeneous and linear parts:
\[
  F(x) = F_H(x) + F_L(x),
\]
where $F_H(x) = (x^\top \Theta_1 x, x^\top \Theta_2 x)^\top$ is the homogeneous quadratic part and $F_L(x) = (\theta_1^\top x, \theta_2^\top x)^\top$ is the linear part.

We are interested in the joint range
\[
  F(\bR^n) = \{F(x) \in \bR^2 : x \in \bR^n\}.
\]
The homogeneous range $F_H(\bR^n)$ is always a convex cone by Dines' theorem, stated below as \Cref{thm:fbop-dines-homogeneous}; however, the inhomogeneous range $F(\bR^n)$ may be nonconvex.
Some sufficient conditions for convexity are known (e.g., Polyak's condition \cite{polyak1998convexity}), but a complete characterization of nonconvex shapes is available in ~\cite{flores2016characterizing}.

\subsection{Source Results Used in Appendix A}

We state the external results used below in the notation of this appendix. In the notation of Flores-Baz\'an and Opazo~\cite{flores2016characterizing}, $A=\Theta_1$, $B=\Theta_2$, $a=\theta_1$, $b=\theta_2$, and $F=(f_1,f_2)$.

\begin{theorem}[Homogeneous range theorem, \cite{flores2016characterizing}, Thm.~2.1]
  \label{thm:fbop-dines-homogeneous}
  The homogeneous joint range
  \[
    F_H(\bR^n)=\{(x^\top\Theta_1x,\ x^\top\Theta_2x)^\top:x\in\bR^n\}
  \]
  is a convex cone.
\end{theorem}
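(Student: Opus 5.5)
The statement to prove is Dines' theorem: the set $F_H(\bR^n)$ is a convex cone. The cone property is immediate. For $x\in\bR^n$ and $t\ge 0$ we have $F_H(\sqrt{t}\,x)=t\,F_H(x)$, so $F_H(\bR^n)$ is closed under nonnegative scaling and contains $0=F_H(0)$. It therefore suffices to show that for any $x,z\in\bR^n$ the segment $[F_H(x),F_H(z)]$ lies in $F_H(\bR^n)$. Convexity of a cone is equivalent to closedness under addition, but proving the segment property directly is more natural here.

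I would restrict to the plane spanned by $x$ and $z$. Consider the one-parameter family $w(\vartheta)=\cos\vartheta\, x+\sin\vartheta\, z$ for $\vartheta\in[0,\pi]$. Then
\[
  F_H(w(\vartheta))=\cos^2\vartheta\,F_H(x)+\sin^2\vartheta\,F_H(z)+2\sin\vartheta\cos\vartheta\,G,
\]
where $G\doteq(x^\top\Theta_1z,\;x^\top\Theta_2z)$. This is a closed curve in $\bR^2$, since $w(\pi)=-w(0)$ and $F_H(w(\pi))=F_H(x)$. Using double-angle formulas it is an ellipse (possibly degenerate) $E$ centred at $\tfrac12(F_H(x)+F_H(z))$. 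Every point of $E$ lies in $F_H(\bR^n)$, as does every nonnegative multiple of such a point.

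The key step is to show that each point $p$ of the segment $[F_H(x),F_H(z)]$ is a nonnegative multiple of a point of $E$. I would argue by cases on the position of $0$ relative to the segment.
\begin{itemize}
  \item If $F_H(x)$ and $F_H(z)$ are linearly dependent and point in opposite directions, then $0$ lies on the segment. The curve $\vartheta\mapsto F_H(w(\vartheta))$ is continuous and its values stay on a line whenever $G$ is parallel to it. An intermediate value argument applied to a linear functional then covers the whole segment. If $G$ is not parallel, the ellipse surrounds the centre, which lies on the segment.
  \item In the generic case $0\notin[F_H(x),F_H(z)]$, the ray from $0$ through $p$ separates $F_H(x)$ from $F_H(z)$. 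Let $\ell$ be a linear functional vanishing on the line through $0$ and $p$. Then $\ell(F_H(w(\vartheta)))$ changes sign between $\vartheta=0$ and $\vartheta=\pi/2$, so by the intermediate value theorem some $\vartheta^*$ gives $F_H(w(\vartheta^*))$ on that line.
\end{itemize}

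The main obstacle is the second case: the point $F_H(w(\vartheta^*))$ might lie on the opposite ray $-\bR_+p$ rather than on $\bR_+p$. To handle this I would use both arcs $\vartheta\in[0,\pi/2]$ and $\vartheta\in[\pi/2,\pi]$. These correspond to the two sign choices of $z$ and have the cross term $G$ with opposite signs. Their two crossings with the line are symmetric with respect to the midpoint of the chord in the affine sense. Because the ellipse $E$ passes through both $F_H(x)$ and $F_H(z)$, it meets the open half-line $\bR_+p$: the chord from $F_H(x)$ to $F_H(z)$ crosses that half-line at $p$, and the closed curve $E$ contains that chord's endpoints on both sides of the line. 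Hence at least one crossing lies on $\bR_+p$, giving $p=t\,F_H(w)$ with $t\ge0$ and $w\in\bR^n$. Scaling back gives $p\in F_H(\bR^n)$, which completes the proof.
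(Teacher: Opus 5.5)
The paper does not prove this statement; it quotes Dines' theorem from Flores-Baz\'an and Opazo, so your argument has to stand on its own. Your reduction to a planar curve is the classical one and is sound. With $A\doteq F_H(x)$, $B\doteq F_H(z)$, $M\doteq\tfrac12(A+B)$ and $U\doteq\tfrac12(A-B)$, the curve $E=\{M+\cos\varphi\,U+\sin\varphi\,G:\varphi\in[0,2\pi]\}$ lies in $F_H(\bR^n)$. The gap is in the step you flag as the main obstacle.

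First, the symmetry claim is false. The two arcs are exchanged by the point reflection $y\mapsto 2M-y$. If the two crossings with the line $L$ through $0$ and $p$ were symmetric about $M$, then $M\in L$, which for linearly independent $A,B$ forces $p=M$. For instance, take $n=2$, $x=e_1$, $z=e_2$, $\Theta_1=\bigl(\begin{smallmatrix}1&1\\1&0\end{smallmatrix}\bigr)$, $\Theta_2=\bigl(\begin{smallmatrix}0&1\\1&1\end{smallmatrix}\bigr)$. This gives $A=(1,0)$, $B=(0,1)$, $G=(1,1)$. For $p=(3/4,1/4)$ the crossings are $\tfrac{1\pm\sqrt7}{2}\,p$: one on each side of the origin, with midpoint $\tfrac12p\neq M$.

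Second, and more importantly, the sentence meant to place a crossing on the open half-line $\{sp:s>0\}$ only uses that $E$ is a closed curve through two points on opposite sides of $L$. That yields a crossing of the line $L$, not of the half-line. A closed curve through $A$ and $B$ can pass behind the origin and meet $L$ only at points $sp$ with $s<0$. This is exactly the failure mode you wanted to exclude, and the example shows it really occurs on one of the two arcs.

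The missing ingredient is convexity of the region bounded by $E$:
\begin{itemize}
\item If $U$ and $G$ are linearly independent, $E$ is a nondegenerate ellipse and $\bd(\conv(E))=E$.
\item Otherwise $E$ is a (possibly trivial) segment and $\conv(E)=E$.
\end{itemize}
Since $[A,B]\subseteq\conv(E)$, for $p\in[A,B]\setminus\{0\}$ the number $s^*\doteq\max\{s\ge0:sp\in\conv(E)\}$ exists by compactness and satisfies $s^*\ge1$. For $s>s^*$ the point $sp$ lies outside $\conv(E)$, so $s^*p\in\bd(\conv(E))$. In either case $s^*p\in E\subseteq F_H(\bR^n)$, and the cone property gives $p=(s^*)^{-1}(s^*p)\in F_H(\bR^n)$.

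This single argument replaces your entire case analysis and also closes its smaller holes:
\begin{itemize}
\item In your first case it is $0$, not the centre, that must be enclosed by $E$.
\item The configuration $A=0$ or $B=0$ falls in neither of your cases.
\item Your second case contains $A,B\neq0$ on a common ray from $0$, where $L$ does not separate them.
\end{itemize}
The last two configurations are trivial by the cone property, but your argument does not handle them.
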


\begin{theorem}[Fixed-direction criterion, \cite{flores2016characterizing}, Thm.~4.14]
  \label{thm:fbop-fixed-direction}
  Let $d=(d_1,d_2)\in\bR^2\setminus\{0\}$. Then $F(\bR^n)+\bR_+d$ is nonconvex if and only if all of the following conditions hold:
  \begin{align*}
     & \textup{(b1) } \{\theta_1,\theta_2\}\subseteq(\ker\Theta_1\cap\ker\Theta_2)^\perp
    \quad(\Leftrightarrow\ F_L(\ker\Theta_1\cap\ker\Theta_2)=\{0\}),                   \\
     & \textup{(b2) } d_2\Theta_1=d_1\Theta_2,                                          \\
     & \textup{(b3) } -d\in F_H(\bR^n),                                                 \\
     & \textup{(b4) } \text{for every }u\in F_H^{-1}(-d),\;                d_1\theta_2^\top u\neq d_2\theta_1^\top u.
  \end{align*}
\end{theorem}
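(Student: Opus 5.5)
The plan is to reduce convexity of $A\doteq F(\bR^n)+\bR_+d$ to convexity of a scalar value function, and then to analyse that function with the S-lemma.

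\textbf{Reduction.} Normalize $\|d\|=1$ and set $d_\perp=(-d_2,d_1)$. Write
\[
f_d(x)\doteq d^\top F(x) = x^\top\Theta_d x+\theta_d^\top x,\qquad f_\perp(x)\doteq d_\perp^\top F(x)=x^\top\Theta_\perp x+\theta_\perp^\top x,
\]
where $\Theta_\perp=d_1\Theta_2-d_2\Theta_1$ and $\theta_\perp=d_1\theta_2-d_2\theta_1$. In the coordinates $y=s\,d_\perp+t\,d$ we have
\[
A=\{s\,d_\perp+t\,d:\ s\in f_\perp(\bR^n),\ t\ge g(s)\},
\]
up to whether the infimum is attained, where
\[
g(s)\doteq\inf\{f_d(x): f_\perp(x)=s\}\in[-\infty,\infty).
\]
The set $f_\perp(\bR^n)$ is connected, hence an interval. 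It follows that $A$ is convex iff $g$ is convex on that interval, with a careful treatment of attainment and of the value $-\infty$. Equivalently, $A$ is convex iff every problem $\min\{f_d: f_\perp=s\}$ has zero Lagrangian duality gap, i.e.\ the equality-constrained S-lemma holds for every right-hand side. Both directions of the theorem are then statements about when this duality fails.

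\textbf{Each failed condition gives convexity.} I treat them one at a time.
\begin{itemize}
\item If (b1) fails, pick $z\in\ker\Theta_1\cap\ker\Theta_2$ with $w\doteq F_L(z)\neq0$. Then $F(x+\tau z)=F(x)+\tau w$, so $A$ is invariant under $\bR w$. A subset of $\bR^2$ that is invariant under a line is convex iff its quotient in $\bR^2/\bR w\cong\bR$ is an interval, and this quotient is connected.
\item If (b2) fails, then $\Theta_\perp\neq0$, so $f_\perp$ is a genuine quadratic constraint. Here I would invoke the known strong-duality results for one quadratic equality constraint (Dines' \Cref{thm:fbop-dines-homogeneous} applied to the pencil, in the spirit of the S-lemma with equality). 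The claim to prove is that $g$ is convex, i.e.\ that the joint range of $(f_\perp,f_d)$ has convex ``lower boundary''. The key fact is that the homogeneous range is a convex cone which is not contained in a line.
\item If (b2) holds but (b3) fails, then $\Theta_d$ has no direction $u$ with $F_H(u)=-d$. The only obstruction to convexity would be a recession direction along which $f_d\to-\infty$ quadratically while $f_\perp$ stays controlled. Ruling this out makes $g$ a convex quadratic-over-affine-slice function.
\item If (b4) fails, some $u\in F_H^{-1}(-d)$ satisfies $\theta_\perp^\top u=0$. Since (b2) makes $u^\top\Theta_\perp u=0$, moving along $u$ keeps $f_\perp$ constant while $f_d\to-\infty$. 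Then $g\equiv-\infty$ on a whole slab, and $A$ is a half-plane or the whole plane.
\end{itemize}

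\textbf{All four conditions give nonconvexity.} By (b1)--(b2), up to a constant, $f_\perp(x)=\theta_\perp^\top x$ is linear (using (b1) to exclude the degenerate cases), and $\theta_\perp\neq0$ by (b4). Then
\[
g(s)=\min\{x^\top\Theta_d x+\theta_d^\top x:\ \theta_\perp^\top x=s\}.
\]
By (b3) there is $u$ with $u^\top\Theta_d u=-1$, and by (b4) every such $u$ has $\theta_\perp^\top u\neq0$. So no negative direction of $\Theta_d$ lies in the hyperplane $\theta_\perp^\top x=0$, and $\Theta_d$ restricted to $\ker\theta_\perp^\top$ is PSD. Then $g(s)$ is finite and equal to a restricted quadratic in $s$. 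Its leading coefficient is computed from the Schur complement of $\Theta_d$ with respect to the constraint, and $\Theta_d$ has a negative direction transverse to the hyperplane, so this coefficient is negative. Hence $g$ is strictly concave, and $A$ is nonconvex.

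\textbf{Main obstacle.} I expect the hardest step to be the case where (b2) fails, i.e.\ proving convexity of $g$ for a genuinely quadratic constraint, including the hard cases where the infimum is not attained or where $(\theta_1,\theta_2)$ interacts with singular directions. I would first try to reduce it to Polyak/Dines convexity of the homogenized map $(x,\tau)\mapsto(\tau^2,\ldots)$ restricted to the slice $\tau=1$.
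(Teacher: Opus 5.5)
Your reduction to the lower-boundary function $g$, and your handling of the cases where (b1), (b3) or (b4) fails, are essentially sound. The nonconvexity direction, however, contains a false step. Under (b1)--(b4) you assert that $g$ is finite and a strictly concave quadratic in $s$. This holds only when the form $x^\top\Theta_d x$ is positive definite on $\ker\theta_\perp^\top$ modulo $\cN_0\doteq\ker\Theta_1\cap\ker\Theta_2$.

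Take $n=2$, $F(x)=(2x_1x_2,\,x_2)$ and $d=(1,0)$. Then $\cN_0=\{0\}$, (b2) holds, $F_H^{-1}(-d)=\{u:2u_1u_2=-1\}\neq\varnothing$, and every such $u$ has $\theta_\perp^\top u=u_2\neq0$, so (b1)--(b4) all hold. Yet $\Theta_d$ vanishes on $\ker\theta_\perp^\top=\bR e_1$, and your leading coefficient is undefined ($\theta_\perp^\top\Theta_d^{-1}\theta_\perp=0$). Indeed $g(s)=\inf_{x_1}2x_1s=-\infty$ for $s\neq0$ while $g(0)=0$, so $A=\{y:y_2\neq0\}\cup\{(a,0):a\ge0\}$. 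This is the punctured case $\Delta=0$ of the paper's appendix classification.

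You need a second subcase. Suppose some $v\in\ker\theta_\perp^\top$ has $v^\top\Theta_dv=0$ but $\Theta_dv\neq0$. Then $\Theta_dv=\alpha\theta_\perp$ with $\alpha\neq0$, and such $v$ is unique up to scaling and $\cN_0$. The slice objective is affine along $v$ with slope $2\alpha s+\theta_d^\top v$, so $g=-\infty$ except at a single level $s^*$. Condition (b1) then gives $g(s^*)$ finite and attained; its real role is to neutralize the $\cN_0$ directions, not to make $f_\perp$ linear. Hence $A$ is the plane minus an open ray, which is nonconvex.

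The second gap is the case where (b2) fails. This is the core of the theorem, and it is only gestured at.

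Your ``key fact'' is misstated. Failure of (b2) gives $F_H(\bR^n)\not\subseteq\bR d$, but $F_H(\bR^n)$ can still lie in a line (e.g.\ $\Theta_2=0\neq\Theta_1$ with $d=(0,1)$).

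More seriously, the proposed equivalence ``$A$ convex iff zero duality gap for every right-hand side'' is false at endpoints of $f_\perp(\bR^n)$. For $F(x)=(x_1^2,\,x_1x_2)$ and $d=(0,1)$ one gets the convex set $A=\{y:y_1>0\}\cup\{(0,t):t\ge0\}$. Yet on the level $x_1^2=0$ the primal value is $0$, while $x_1x_2+\mu x_1^2$ is unbounded below for every $\mu\in\bR$.

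A complete argument must do three things.
\begin{enumerate}
\item[(i)] Invoke the equality-constrained S-lemma only for $s$ in the interior of the range. It is available when $\Theta_\perp\neq0$ and $f_\perp-s$ takes both signs. This gives $g(s)=\sup_{\mu}\bigl(\inf_x(f_d+\mu f_\perp)-\mu s\bigr)$, hence convexity of $g$ on the interior.
\item[(ii)] Show $g(s_0)\ge\limsup_{s\to s_0}g(s)$ at an endpoint $s_0\in f_\perp(\bR^n)$. This uses that $\nabla f_\perp$ vanishes on the endpoint fiber and $\Theta_\perp$ is a nonzero semidefinite matrix, so $f_\perp(x_0+\epsilon v)$ sweeps a one-sided neighbourhood of $s_0$.
\item[(iii)] Control attainment. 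If $g$ is affine on $[s_1,s_2]$ with both ends attained, the S-lemma multiplier at an interior level makes both attaining points global minimizers of $f_d+\mu f_\perp$. That minimizer set is affine, so every intermediate level is attained on the segment joining them.
\end{enumerate}
Steps (ii) and (iii) are precisely the ``careful treatment'' you postpone. Without them, convexity of $A$ does not follow from convexity of $g$.

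The paper does not prove this statement itself; it quotes it from Flores-Baz\'an and Opazo. Its appendix re-derives only the nonconvex shapes, treating $\Delta<0$ and $\Delta=0$ separately.
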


\begin{theorem}[Converse ray criterion, \cite{flores2016characterizing}, Thm.~4.15]
  \label{thm:fbop-ray-converse}
  If $F(\bR^n)+\bR_+d$ is convex for every $d\in\bR^2\setminus\{0\}$, then $F(\bR^n)$ is convex. Equivalently, if $F(\bR^n)$ is nonconvex, then there exists $d\in\bR^2\setminus\{0\}$ such that $F(\bR^n)+\bR_+d$ is nonconvex.
\end{theorem}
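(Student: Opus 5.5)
We argue by contraposition. Assume that $F(\bR^n)$ is nonconvex, and fix $a,b\in F(\bR^n)$ and a point $c\in[a,b]$ with $c\notin F(\bR^n)$. If instead $F(\bR^n)+\bR_+d$ were convex for every $d\ne0$, then $c\in\conv(F(\bR^n))\subseteq F(\bR^n)+\bR_+d$ for every $d$. Since $c\notin F(\bR^n)$, this means that every open ray $\{c-sd:s>0\}$ meets $F(\bR^n)$. So we must show that a nonconvex joint range cannot surround a missing point in this way. Unlike a general connected set such as an annulus, $F(\bR^n)$ has a rigid local structure, and we will exploit that.

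\textbf{First tool: local expansion at a point.} For any $x_0$ and $u$ we have
\[
F(x_0+u)=F(x_0)+J(x_0)u+F_H(u),
\qquad
J(x_0)\doteq\begin{pmatrix}(2\Theta_1x_0+\theta_1)^\top\\(2\Theta_2x_0+\theta_2)^\top\end{pmatrix}.
\]
If $J(x_0)$ has rank $2$, the open mapping theorem puts $F(x_0)$ in $\inter F(\bR^n)$. Hence every point of $\bd F(\bR^n)$, and every limit point of $F(\bR^n)$ not in the range, is approached only through points where $J$ is rank deficient.

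\textbf{Second tool: lines.} Along a line $x_0+tu$ the image is the planar curve $F(x_0)+tJ(x_0)u+t^2F_H(u)$. This is a parabola, a line, a ray or a point. By \Cref{thm:fbop-dines-homogeneous}, the quadratic parts $F_H(u)$ sweep a convex cone.

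\textbf{Step 1: the degenerate set.} We study $\Sigma\doteq\{x:\operatorname{rank}J(x)\le1\}$. We aim to show that a missing point $c$ inside $\conv F(\bR^n)$ forces $\Sigma$ to be large enough to produce boundary of $F(\bR^n)$ on both sides of $c$. Rank deficiency means that $\mu_1(2\Theta_1x+\theta_1)+\mu_2(2\Theta_2x+\theta_2)=0$ for some $\mu(x)\neq0$. We will show that if $\Theta_1,\Theta_2$ are linearly independent, or if (b1) fails, then $\Sigma$ is too thin. In that case a perturbation argument on the convex cone $F_H(\bR^n)$ gives $[a,b]\subseteq F(\bR^n)$, a contradiction. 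This reproduces the ``convex'' exits of Algorithm~\ref{alg:joint-range}. So we may assume (b1) and $d_2\Theta_1=d_1\Theta_2$ for some $d\ne0$, which is condition (b2).

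\textbf{Step 2: reduction to one quadratic.} Under (b1)--(b2), $F$ reduces, after an orthogonal change of coordinates in $\bR^2$, to the pair $\bigl(x^\top Q x+\alpha^\top x,\ \beta^\top x\bigr)$ with $Q=\hat d_1\Theta_1+\hat d_2\Theta_2$. Its range is the union over $s$ of the vertical slices $\{x^\top Qx+\alpha^\top x:\beta^\top x=s\}$. Each slice is the range of a quadratic on an affine subspace, hence a point, a closed half-line or a full line, with an endpoint depending quadratically on $s$ when it is finite. Nonconvexity then forces the slices to be half-lines. One of the following then holds: they point in a common direction $-\hat d$ with endpoint curve strictly concave in the wrong orientation, or the slices degenerate to points along a parabola.

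\textbf{Step 3: choosing $d$.} In both situations we take $d=\hat d$ or $d=-\hat d$, namely the direction that the half-lines do \emph{not} already contain. We verify (b3), which says that $-d$ is attained by $F_H$ through the negative eigendirection, and (b4), which is the $\beta$-nondegeneracy. By \Cref{thm:fbop-fixed-direction}, $F(\bR^n)+\bR_+d$ is then nonconvex, which is the contrapositive claim.

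The main obstacle is Step~1: excluding convexity failures when $\Theta_1,\Theta_2$ are independent or (b1) fails. There I would first try the standard Polyak/Dines argument of adding a small multiple of a kernel direction or of an independent quadratic form to fill the segment $[a,b]$.
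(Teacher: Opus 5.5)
The paper does not prove this statement; it imports it from Flores-Baz\'an--Opazo. Within the paper's framework it is a two-line consequence of the other two imported results. If $F(\bR^n)$ is nonconvex, \Cref{thm:fbop-global-convexity} gives $d\neq0$ for which (C1)--(C4) all fail, i.e.\ (b1)--(b4) hold, and \Cref{thm:fbop-fixed-direction} then makes $F(\bR^n)+\bR_+d$ nonconvex.

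\textbf{Step 1 is the real gap.} Your plan re-derives the hard direction of \Cref{thm:fbop-global-convexity} by hand, and Step~1 carries all the weight but is not proved. Convexity of the range for linearly independent $\Theta_1,\Theta_2$ is exactly \Cref{cor:LD-necessary}, which the paper obtains only from \Cref{thm:fbop-global-convexity}. If you allow yourself that theorem, Steps~1--2 are superfluous; if not, this must be proved from scratch. The mechanism you sketch cannot do it:
\begin{itemize}
\item Your first tool claims that missing limit points are approached only through rank-deficient points. This is false, because the range need not be closed and boundary can be created at infinity through regular points. For $F(x)=(x_1^2,x_1x_2)$ (independent $\Theta_i$) the range is $\{y_1>0\}\cup\{0\}$ and the only critical value is $0$. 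Yet $(0,t)$, $t\neq0$, is the limit of $F(\epsilon,t/\epsilon)$, where $\det J=2\epsilon^2\neq0$.
\item Conversely, $f_1=x_1^2-x_2^2$, $f_2=x_1+x_2$ has critical set the line $x_1+x_2=0$ with single critical value $0$. Its range $\bR^2\setminus\{(y_1,0):y_1\neq0\}$ is nonconvex. So thinness of $\Sigma$ does not separate convex from nonconvex ranges.
\item Polyak's argument does not cover the independent case either, since the first example admits no $\mu$ with $\mu_1\Theta_1+\mu_2\Theta_2\succ0$.
\end{itemize}
Only the (b1)-failing half of Step~1 is easy. For $u\in\ker\Theta_1\cap\ker\Theta_2$ with $w\doteq F_L(u)\neq0$ one has $F(x+tu)=F(x)+tw$. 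So the range is a union of lines parallel to $w$ over an interval, hence convex.

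\textbf{Steps~2--3 also need repair.} First, the slice dichotomy is incomplete. Take the second example above, which satisfies (b1)--(b2). Its slices $\{x_1^2-x_2^2:x_1+x_2=s\}=\{s(x_1-x_2)\}$ are full lines for $s\neq0$ and a single point for $s=0$. The slice type can jump at isolated $s$, because the linear term restricted to $\ker\beta^\top$ depends on $s$. These punctured ($\Delta=0$) shapes are covered by neither of your alternatives.

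Second, your rule for choosing $d$ is backwards. Suppose all slices are half-lines in a common direction $e$. Then $F(\bR^n)+\bR_+(-e)$ fills every slice to a full line, so it is a union of parallel lines over the interval $\beta^\top(\bR^n)$ and hence convex. It is $F(\bR^n)+\bR_+e=F(\bR^n)$ that is nonconvex. In the paper's canonical solid case the slices $[\tilde y_2^2/\Delta,\infty)$ point along $+\hat d$, and the witness is $d=\hat d$, the direction the half-lines already contain. In the parabola case your rule is undefined; the witness there is the direction pointing away from the convex side of the parabola.
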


\begin{theorem}[Global convexity characterization, \cite{flores2016characterizing}, Thm.~4.16]
  \label{thm:fbop-global-convexity}
  The joint range $F(\bR^n)$ is convex if and only if, for every $d=(d_1,d_2)\in\bR^2\setminus\{0\}$, at least one of the following conditions holds:
  \begin{itemize}
    \item[(C1)] $F_L(\ker\Theta_1\cap\ker\Theta_2)\neq\{0\}$
          $(\Leftrightarrow\{\theta_1,\theta_2\}\not\subseteq
            (\ker\Theta_1\cap\ker\Theta_2)^\perp)$;
    \item[(C2)] $d_1\Theta_2\neq d_2\Theta_1$;
    \item[(C3)] $-d\notin F_H(\bR^n)$;
    \item[(C4)] there exists $u\in F_H^{-1}(-d)$ such that
          $d_1\theta_2^\top u=d_2\theta_1^\top u$.
  \end{itemize}
\end{theorem}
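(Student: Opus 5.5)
The plan is to derive this characterization directly from the two preceding results, \Cref{thm:fbop-fixed-direction} and \Cref{thm:fbop-ray-converse}. The conditions (C1)--(C4) are term-by-term logical negations of (b1)--(b4). (C1) negates (b1), and (C2) negates (b2). (C3) negates (b3). (C4), which asks for some $u\in F_H^{-1}(-d)$ with $d_1\theta_2^\top u=d_2\theta_1^\top u$, negates the universally quantified (b4). Hence, for a fixed $d\neq 0$, ``at least one of (C1)--(C4) holds'' is equivalent to ``not all of (b1)--(b4) hold''. By \Cref{thm:fbop-fixed-direction}, this is in turn equivalent to convexity of $F(\bR^n)+\bR_+d$. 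So the statement reduces to the claim
\[
F(\bR^n)\text{ convex}\iff F(\bR^n)+\bR_+d\text{ convex for every }d\in\bR^2\setminus\{0\}.
\]

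For ``$\Rightarrow$'': if $F(\bR^n)$ is convex, then $F(\bR^n)+\bR_+d$ is the Minkowski sum of two convex sets, hence convex for every $d$. Applying the equivalence above for each $d$ then yields one of (C1)--(C4). For ``$\Leftarrow$'': if for every $d\neq0$ one of (C1)--(C4) holds, then by the equivalence every $F(\bR^n)+\bR_+d$ is convex. \Cref{thm:fbop-ray-converse} then gives convexity of $F(\bR^n)$ itself.

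The only point needing care is the quantifier in (b4)/(C4) when $F_H^{-1}(-d)=\varnothing$. In that case (b4) holds vacuously and (C4) fails, so the negation pairing (b4)$\leftrightarrow$(C4) is still exact as a formal statement. Moreover, emptiness of $F_H^{-1}(-d)$ means $-d\notin F_H(\bR^n)$, so (b3) fails and (C3) holds, and the disjunction is correctly satisfied. I would state this check explicitly. Otherwise the argument is a purely logical assembly of the two cited theorems, with no real obstacle. The substantive difficulty lives inside \Cref{thm:fbop-fixed-direction} and \Cref{thm:fbop-ray-converse}, which we take as given.
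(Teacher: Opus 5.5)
Your argument is correct. (C1)--(C4) are exactly the negations of (b1)--(b4), and your handling of the vacuous case $F_H^{-1}(-d)=\varnothing$ is right. So \Cref{thm:fbop-fixed-direction} converts the condition into convexity of every $F(\bR^n)+\bR_+d$; the forward direction is the Minkowski-sum observation, and the reverse direction is \Cref{thm:fbop-ray-converse}.

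The paper does not prove this statement itself. It imports it from Flores-Baz\'an and Opazo, but your reduction is the same bookkeeping the paper carries out in the proof of \Cref{char.nonconvexity}. As you note, all the substance lies in the two cited theorems, especially \Cref{thm:fbop-ray-converse}. That result depends on the quadratic structure of $F$: the unit circle in $\bR^2$ is nonconvex, yet its sum with every ray $\bR_+d$ is convex.
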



By \Cref{thm:fbop-dines-homogeneous}, the homogeneous range $F_H(\bR^n)$ is convex. The global result \Cref{thm:fbop-global-convexity} provides a complete characterization of the convex cases, and therefore, by complement, of the nonconvex cases.

\begin{lemma}
  \label{char.nonconvexity}
  Let $f_1,f_2$ be quadratic functions as above. The joint range $
    F(\bR^n)
  $
  is nonconvex if and only if there exists at least one direction $d \in \bR^2$, $d \neq 0$ such that:
  \begin{align*}
     & \text{(b1) } \{\theta_1,\theta_2\}\subseteq (\ker \Theta_1\cap \ker \Theta_2)^\perp
    \quad (\Leftrightarrow\ F_L(\ker \Theta_1\cap \ker \Theta_2)=\{0\}),                   \\
     & \text{(b2) } d_2\Theta_1=d_1\Theta_2,                                               \\
     & \text{(b3) } -d\in F_H(\bR^n),                                                      \\
     & \text{(b4) } \text{for every } u\in F_H^{-1}(-d)
    \text{ (equivalently, } u^\top \Theta_1 u=-d_1,\ u^\top \Theta_2 u=-d_2\text{)},       \\
     & \hspace{3.2em} d_1\theta_2^\top u\neq d_2\theta_1^\top u.
  \end{align*}
\end{lemma}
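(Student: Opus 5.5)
The plan is to derive the lemma as the logical complement of \Cref{thm:fbop-global-convexity}, using \Cref{thm:fbop-fixed-direction} and \Cref{thm:fbop-ray-converse} as a cross-check for each direction of the equivalence. No new geometric argument should be needed; the content is a restatement of the cited characterization, combined with the explicit unpacking of $F_H^{-1}(-d)$.

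For the direction ``nonconvex $\Rightarrow$ exists $d$'', we negate \Cref{thm:fbop-global-convexity}. If $F(\bR^n)$ is nonconvex, then there is some $d\neq 0$ for which none of (C1)--(C4) holds, and each negation translates into the corresponding condition of the lemma:
\begin{itemize}
\item[$\neg$(C1)] gives (b1).
\item[$\neg$(C2)] gives $d_1\Theta_2=d_2\Theta_1$, which is (b2).
\item[$\neg$(C3)] gives $-d\in F_H(\bR^n)$, which is (b3).
\item[$\neg$(C4)] says that every $u\in F_H^{-1}(-d)$ has $d_1\theta_2^\top u\neq d_2\theta_1^\top u$, which is (b4).
\end{itemize}
Alternatively, \Cref{thm:fbop-ray-converse} yields a $d$ with $F(\bR^n)+\bR_+d$ nonconvex, and the ``only if'' part of \Cref{thm:fbop-fixed-direction} then gives (b1)--(b4) for that $d$.

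For the converse, suppose some $d\neq0$ satisfies (b1)--(b4). Then for this $d$ all of (C1)--(C4) fail, so the condition in \Cref{thm:fbop-global-convexity} (``for every $d$, at least one holds'') is violated, and $F(\bR^n)$ is nonconvex. This uses only the ``if'' half of the global characterization. Note that the fixed-direction criterion alone would only give nonconvexity of $F(\bR^n)+\bR_+d$. That does not directly imply nonconvexity of $F(\bR^n)$, since a convex set plus a ray is convex, so the contrapositive does go through: $F$ convex $\Rightarrow$ $F+\bR_+d$ convex. I would mention this second route as an independent verification.

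The remaining bookkeeping is to note that $u\in F_H^{-1}(-d)$ means precisely $u^\top\Theta_1u=-d_1$ and $u^\top\Theta_2u=-d_2$. We also need the equivalence in (b1): $F_L$ vanishes on $\cN_0=\ker\Theta_1\cap\ker\Theta_2$ iff $\theta_i^\top z=0$ for all $z\in\cN_0$ and $i=1,2$, i.e.\ $\theta_i\in\cN_0^\perp$.

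The only delicate point I expect is matching quantifiers and sign conventions with the source: (C2) is written as $d_1\Theta_2\neq d_2\Theta_1$ while (b2) is written $d_2\Theta_1=d_1\Theta_2$, and (C4) is existential while (b4) is universal. I would check carefully that these are exact negations, including the vacuous case $F_H^{-1}(-d)=\varnothing$, which is excluded by (b3).
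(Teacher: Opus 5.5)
Your proposal is correct and follows the paper's proof: the lemma is obtained by negating the global convexity characterization (\Cref{thm:fbop-global-convexity}) condition by condition, and the fixed-direction and converse ray criteria serve as the same cross-check the paper cites. One wording fix: in your second route, say directly that nonconvexity of $F(\bR^n)+\bR_+d$ \emph{does} imply nonconvexity of $F(\bR^n)$, because the Minkowski sum of a convex set and a ray is convex.
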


\begin{proof}
  By \Cref{thm:fbop-global-convexity}, $F(\bR^n)$ is convex if and only if, for every $d\in\bR^2\setminus\{0\}$, at least one of \textup{(C1)--(C4)} holds.
  Hence $F(\bR^n)$ is nonconvex if and only if there exists $d\neq0$ for which
  none of \textup{(C1)--(C4)} holds. The negations of \textup{(C1)--(C4)} are
  exactly \textup{(b1)--(b4)} above. For a fixed direction $d$, these same
  conditions are the fixed-direction criterion in \Cref{thm:fbop-fixed-direction};
  \Cref{thm:fbop-ray-converse} gives the converse existence of such a direction
  when $F(\bR^n)$ itself is nonconvex.
\end{proof}

\begin{corollary}
  \label{cor:LD-necessary}
  If $\Theta_1$ and $\Theta_2$ are linearly independent, then $F(\bR^n)$ is
  convex.
\end{corollary}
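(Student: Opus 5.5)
The plan is to argue by contraposition through \Cref{char.nonconvexity}. Suppose $F(\bR^n)$ is nonconvex. Then there exists a direction $d=(d_1,d_2)\neq 0$ satisfying conditions (b1)--(b4). We only need condition (b2), namely $d_2\Theta_1=d_1\Theta_2$, which we rewrite as the linear relation
\[
  d_2\Theta_1-d_1\Theta_2=0 .
\]

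The key step is to show this relation is nontrivial. Since $d\neq 0$, at least one of the coefficients $d_2$ and $-d_1$ is nonzero. So (b2) exhibits a nontrivial linear combination of $\Theta_1$ and $\Theta_2$ equal to zero. This means $\Theta_1$ and $\Theta_2$ are linearly dependent, contradicting the hypothesis. Hence $F(\bR^n)$ must be convex.

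There is essentially no obstacle. The only point to check is that the coefficient vector $(d_2,-d_1)$ is nonzero whenever $d$ is, which is immediate. Conditions (b1), (b3) and (b4) play no role. The argument also covers the degenerate possibility that one matrix is zero, since a zero matrix together with any other matrix is linearly dependent, so that case is excluded by hypothesis. This also justifies the early exit in Algorithm~\ref{alg:joint-range} that returns \textsc{Convex} when $\Theta_1$ and $\Theta_2$ are linearly independent.
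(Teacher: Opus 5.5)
Your proof is correct and is essentially the paper's argument. The paper notes that linear independence rules out any nonzero $d$ with $d_2\Theta_1=d_1\Theta_2$, so condition (C2) of \Cref{thm:fbop-global-convexity} holds for every $d\neq 0$; this is the contrapositive of your use of (b2) in \Cref{char.nonconvexity}, and routing through the lemma rather than citing the global characterization directly changes nothing of substance.
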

\begin{proof}
  Linear independence of $\Theta_1$ and $\Theta_2$ means there is no nonzero
  $d\in\bR^2$ satisfying $d_2\Theta_1 = d_1\Theta_2$, so condition~(C2) of
  \Cref{thm:fbop-global-convexity} holds for every $d\neq 0$.
  By \Cref{thm:fbop-global-convexity}, $F(\bR^n)$ is convex.
\end{proof}

Hence it suffices to consider the case where $\Theta_1$ and $\Theta_2$ are
linearly dependent; if they are linearly independent, the joint range is
already convex.

\subsection{Detailed Canonical Transformation}
\label{sec:app-canonical-transformation}

The procedure checks the candidate directions $d$ for which the fixed-direction
criterion of \Cref{thm:fbop-fixed-direction} can hold. For such
a direction, conditions \textup{(b1)--(b4)} certify that
$F(\bR^n)+\bR_+d$ is nonconvex; consequently $F(\bR^n)$ itself cannot be convex.

The procedure begins with the assumption that condition~(b2) holds, which requires $d = (d_1,d_2)$ to
satisfy
$d_1\Theta_2 = d_2\Theta_1$.
If \(\Theta_1=\Theta_2=0\), then \(F_H(\bR^n)=\{0\}\) and the joint range is affine, hence convex. We therefore assume \((\Theta_1,\Theta_2)\neq(0,0)\).
Since only the direction of $d$ matters, we normalize $\|d\|_2 = 1$; the
solution is then unique up to sign. Therefore, there are two candidate
directions $d$ to check. Given a candidate direction $d$, it is easy to verify
condition~\textup{(b1)} via standard linear algebra routines. Thus, we assume
\textup{(b1)} holds as well. We then check the remaining conditions
\textup{(b3)} and \textup{(b4)} for this fixed candidate direction.

\subsection{Step 1: Simultaneous Diagonalization}

Let $l \doteq \dim(\ker \Theta_1 \cap \ker \Theta_2)$. Let $d_\perp \doteq (-d_2,d_1)$ be the unit vector orthogonal to $d$. Then
\[
  F_H(x) = F_H(x)^\top d\, d + F_H(x)^\top d_\perp\, d_\perp \in \bR^2.
\]

Given the assumption that condition (b2) holds, we have for all $x$,
\begin{equation*}
  F_H(x)^\top d_\perp = x^\top(d_1\Theta_2-d_2\Theta_1)x = 0.
\end{equation*}
Hence $F_H(x)$ is collinear with $d$, so there exists a scalar quadratic function $q_d$ satisfying
\begin{equation}
  \label{eq:app-FH-Q}
  F_H(x) = q_d(x)\,d,
\end{equation}
where
\[
  \bar Q_d\doteq d_1\Theta_1+d_2\Theta_2,
  \qquad
  q_d(x)\doteq F_H(x)^\top d=x^\top\bar Q_d x.
\]

\begin{proposition}
  \label{prop:Q-cases}
  Under condition \textup{(b2)}, let $(m_+,m_-,m_0)$ be the inertia of
  $\bar Q_d \doteq d_1\Theta_1+d_2\Theta_2$ restricted to the active subspace
  $(\ker\Theta_1\cap\ker\Theta_2)^\perp$. Then $m_0=0$. In addition,
  \begin{enumerate}[label=\textup{(\roman*)}]
    \item condition \textup{(b3)} holds if and only if $m_-\ge 1$;
    \item if condition \textup{(b4)} holds, then $m_-\le 1$.
  \end{enumerate}
  In particular, if conditions \textup{(b1)--(b4)} all hold, then $m_-=1$.
\end{proposition}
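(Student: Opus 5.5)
The plan is to first reduce condition~(b2) to the statement that both quadratic parts are scalar multiples of the single matrix $\bar Q_d$. Every claim then becomes a statement about the spectrum of $\bar Q_d$.

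\textbf{Reduction under (b2).} Using $\|d\|_2=1$ and $d_1\Theta_2=d_2\Theta_1$, compute
\[
d_1\bar Q_d=d_1^2\Theta_1+d_2(d_1\Theta_2)=(d_1^2+d_2^2)\Theta_1=\Theta_1 .
\]
Symmetrically, $d_2\bar Q_d=\Theta_2$. Hence $\Theta_i=d_i\bar Q_d$ for $i=1,2$. Since $d\neq0$, this gives $\ker\Theta_1\cap\ker\Theta_2=\ker\bar Q_d$, and therefore the active subspace is
\[
(\ker\Theta_1\cap\ker\Theta_2)^\perp=(\ker\bar Q_d)^\perp=\rang(\bar Q_d).
\]
This subspace is invariant under the symmetric matrix $\bar Q_d$ and is spanned by its eigenvectors with nonzero eigenvalues. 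So the restriction of $\bar Q_d$ to it is nonsingular, which gives $m_0=0$. It also shows that every eigenvector of $\bar Q_d$ with a negative eigenvalue lies in the active subspace, so $m_-$ equals the number of negative eigenvalues of $\bar Q_d$ itself.

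\textbf{Part (i).} By \eqref{eq:app-FH-Q}, $F_H(x)=q_d(x)\,d$. Since $d\neq0$, we get $-d\in F_H(\bR^n)$ if and only if some $x$ satisfies $x^\top\bar Q_d x=-1$. Such an $x$ exists exactly when $\bar Q_d$ has a negative eigenvalue: rescale a corresponding eigenvector for one direction, and use $\bar Q_d\succeq0\Rightarrow q_d\ge0$ for the other. By the reduction step, this is equivalent to $m_-\ge1$.

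\textbf{Part (ii).} I argue by contraposition. Assume $m_-\ge2$ and let $V$ be the span of two orthonormal eigenvectors of $\bar Q_d$ with negative eigenvalues, so $\bar Q_d$ is negative definite on $V$. Put $w\doteq d_1\theta_2-d_2\theta_1$. Condition~(b4) says that $w^\top u\ne0$ for every $u$ with $u^\top\bar Q_d u=-1$. Because $F_H(u)=q_d(u)\,d$, the set $F_H^{-1}(-d)$ is exactly $\{u: u^\top\bar Q_d u=-1\}$.

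The subspace $V\cap w^\perp$ has dimension at least $2-1=1$, so it contains some $u\ne0$. Then $u^\top\bar Q_d u<0$, and after rescaling we may take $u^\top\bar Q_d u=-1$. This gives $u\in F_H^{-1}(-d)$ with $w^\top u=0$, which violates (b4). Hence (b4) forces $m_-\le1$.

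\textbf{Conclusion.} If (b1)--(b4) all hold, then (i) gives $m_-\ge1$ and (ii) gives $m_-\le1$, so $m_-=1$.

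The only delicate point is the reduction $\Theta_i=d_i\bar Q_d$, since it is what identifies the active subspace with $\rang(\bar Q_d)$. The dimension count in (ii) is routine once that identification is in place.
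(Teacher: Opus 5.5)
Your proof is correct and follows essentially the same route as the paper. You derive $\Theta_i=d_i\bar Q_d$ to identify the active subspace with $\rang(\bar Q_d)$, which gives $m_0=0$; you prove (i) via $F_H(x)=q_d(x)\,d$; and you prove (ii) by a dimension count that yields a negative-curvature vector orthogonal to $d_1\theta_2-d_2\theta_1$. The only cosmetic difference is that you work directly with two orthonormal negative eigenvectors of $\bar Q_d$, whereas the paper runs the same dimension count through a congruence $\Sigma$ that simultaneously diagonalizes $\Theta_1$ and $\Theta_2$.
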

\begin{proof}
  Condition~(b2) ($d_2\Theta_1=d_1\Theta_2$), together with $\|d\|_2=1$,
  implies $\ker \bar Q_d=\ker\Theta_1\cap\ker\Theta_2$. Thus the restriction of $\bar Q_d$
  to $(\ker\Theta_1\cap\ker\Theta_2)^\perp$ has no zero eigenvalues, i.e.,
  $m_0=0$. Moreover, condition~\textup{(b2)} and $\|d\|_2=1$ imply
  $\Theta_i=d_i\bar Q_d$ for $i=1,2$. Hence diagonalizing
  $\bar Q_d$ on the active subspace and completing by a basis of the common kernel
  gives a nonsingular $\Sigma$ satisfying
  $\Sigma^\top\Theta_i\Sigma
    =d_i\operatorname{diag}(I_{m_+},-I_{m_-},0_l)$ for $i=1,2$.

  For part~\textup{(i)}, condition~\textup{(b2)} gives $F_H(x)=q_d(x)d$.
  Since $\|d\|_2=1$, condition~\textup{(b3)} is equivalent to the existence of
  some $x$ with $q_d(x)=-1$. By homogeneity of $q_d$, this is equivalent to the
  existence of some $x$ with $q_d(x)<0$, which holds if and only if $\bar Q_d$
  has a negative eigenvalue on the active subspace, i.e., $m_-\ge1$.

  For part~\textup{(ii)}, suppose condition~\textup{(b4)} holds and assume
  for contradiction that $m_-\ge 2$.
  Define $\gamma\doteq d_1\theta_2-d_2\theta_1\in\bR^n$ and let
  $\hat{\gamma}\doteq\Sigma^\top\gamma
    =(\gamma_+,\gamma_-,\gamma_0)
    \in\bR^{m_+}\times\bR^{m_-}\times\bR^l$.
  Since $m_-\ge2$, the subspace
  \[
    \{v_-\in\bR^{m_-}:\gamma_-^\top v_-=0\}
  \]
  has dimension at least $m_- -1\ge1$ (and dimension $m_-$ if
  $\gamma_-=0$); choose a unit vector $v_-$ in it.
  Set $\hat{v}\doteq(0,v_-,0)\in\bR^{m_+}\times\bR^{m_-}\times\bR^l$ and
  $u\doteq\Sigma\hat{v}$.
  The block-diagonal structure of $\Sigma^\top\Theta_i\Sigma$ gives,
  for $i=1,2$,
  \[
    u^\top\Theta_i u
    = \hat{v}^\top(\Sigma^\top\Theta_i\Sigma)\hat{v}
    = d_i\bigl(0-\norm{v_-}^2\bigr)
    = -d_i,
  \]
  so $u$ satisfies the antecedent of condition~\textup{(b4)}.
  However,
  \[
    \gamma^\top u
    = \hat{\gamma}^\top\hat{v}
    = \gamma_+^\top 0 + \gamma_-^\top v_- + \gamma_0^\top 0
    = 0,
  \]
  i.e., $d_1\theta_2^\top u=d_2\theta_1^\top u$, contradicting the
  consequent of condition~\textup{(b4)}.
  Hence $m_-\le 1$.
\end{proof}

We assume that condition~\textup{(b3)} holds, i.e., $m_-\ge1$, otherwise the
direction $d$ cannot produce a nonconvex shape. The preceding proposition shows
that condition~\textup{(b4)} cannot hold when $m_-\ge2$. Hence, if $m_-\ne1$,
this candidate direction is discarded. In the remaining case $m_-=1$, we use
the simultaneous diagonalization from the proof as the first change of
variables. With $l=\dim(\ker\Theta_1\cap\ker\Theta_2)$, fix a nonsingular
$\Sigma\in\bR^{n\times n}$ such that
\[
  \Sigma^\top \Theta_1 \Sigma = d_1\operatorname{diag}(I_{m_+},-I_{m_-},0_l),
  \qquad
  \Sigma^\top \Theta_2 \Sigma = d_2\operatorname{diag}(I_{m_+},-I_{m_-},0_l).
\]
Equivalently,
\[
  \Sigma^\top \bar Q_d\Sigma
  = \operatorname{diag}(I_{m_+},-I_{m_-},0_l).
\]
Taking $U=\Sigma$ and using $m_-=1$ gives
\begin{equation}
  \label{eq:app-Q-diag}
  q_d(Uw) = w^\top U^\top \bar Q_d U w = \sum_{i=1}^{m_+} w_i^2 - w_{1 + m_+}^2,
\end{equation}
where $1 + m_+ = n-l$ and $m_+ \ge 0$.

\subsection{Step 2: Change of coordinates and variables}

Combining \eqref{eq:app-FH-Q} and \eqref{eq:app-Q-diag} yields
\begin{equation}
  \label{eq:app-F-diag}
  F(Uw) = \left( \sum_{i=1}^{m_+} w_i^2 - w_{1 + m_+}^2 \right) d + F_L(Uw).
\end{equation}

Since $U$ is nonsingular, the map $w \mapsto Uw$ is a bijection on $\bR^n$, so
$F(\bR^n)= F(U\bR^n)$. Therefore,
$F(\bR^n)=\{F(Uw)^\top d\, d + F(Uw)^\top d_\perp\, d_\perp:w\in\bR^n\}$.

Let $u_i\doteq Ue_i$ and
\[
  \alpha_i \doteq (d_1\theta_1+d_2\theta_2)^\top u_i,
  \qquad
  \beta_i  \doteq (d_1\theta_2-d_2\theta_1)^\top u_i,
  \qquad i=1,\dots,n.
\]
\begin{proposition}
  \label{prop:b1-active}
  Under the  condition \textup{(b1)}, we have $\alpha_i=\beta_i=0$ for all $i>1 + m_+$.
\end{proposition}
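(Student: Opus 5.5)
The plan is to show that for $i>1+m_+$ the columns $u_i=Ue_i$ lie in the common kernel $\cN_0\doteq\ker\Theta_1\cap\ker\Theta_2$. Condition~\textup{(b1)} then makes both linear forms vanish on these columns.

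\emph{Step 1 (kernel columns).} Recall from the proof of \Cref{prop:Q-cases} that $\Sigma=U$ is obtained by diagonalizing $\bar Q_d$ on the active subspace $\cN_0^\perp$ and then completing with a basis of $\cN_0$. So the last $l$ columns $u_i$, $i=1+m_+ +1,\dots,n$, are by construction a basis of $\cN_0$. I will not rely only on the construction, and will also check this from the diagonal form. From $U^\top\Theta_k U=d_k\operatorname{diag}(I_{m_+},-I_{1},0_l)$ we get $U^\top\Theta_k u_i=0$ for such $i$. Since $U$ is nonsingular, this gives $\Theta_k u_i=0$ for $k=1,2$. Hence $u_i\in\cN_0$.

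\emph{Step 2 (apply (b1)).} Condition~\textup{(b1)} says $\theta_1,\theta_2\in\cN_0^\perp$, so $\theta_1^\top u_i=\theta_2^\top u_i=0$ for every $i>1+m_+$. The quantities $\alpha_i=(d_1\theta_1+d_2\theta_2)^\top u_i$ and $\beta_i=(d_1\theta_2-d_2\theta_1)^\top u_i$ are linear combinations of these two numbers, so both vanish.

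The argument is essentially immediate. The only point needing care is Step~1: one has to invert $U^\top$ in order to pass from the zero block of $U^\top\Theta_kU$ to the statement $u_i\in\ker\Theta_k$. I expect no further obstacle; the index count $1+m_+=n-l$ from \eqref{eq:app-Q-diag} confirms that exactly the last $l$ columns are involved.
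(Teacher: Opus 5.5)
Your proposal is correct and follows the paper's proof. The columns $u_i$ with $i>1+m_+$ lie in $\cN_0=\ker\Theta_1\cap\ker\Theta_2$, so \textup{(b1)} forces $\theta_1^\top u_i=\theta_2^\top u_i=0$, and hence $\alpha_i=\beta_i=0$. Your extra check, that $U^\top\Theta_k u_i=0$ together with the nonsingularity of $U$ gives $\Theta_k u_i=0$, is valid and slightly more self-contained than the paper, which reads this kernel membership directly off the construction of $\Sigma$.
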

\begin{proof}
  Columns $u_{m_+ + 2},\dots,u_n$ span $\ker\Theta_1\cap\ker\Theta_2$, whereas
  condition~(b1) requires $\theta_k\in(\ker\Theta_1\cap\ker\Theta_2)^\perp$,
  so $\theta_k^\top u_i=0$ for $i>1 + m_+$ and $k=1,2$.
  Hence $\alpha_i=(d_1\theta_1+d_2\theta_2)^\top u_i=0$ and
  $\beta_i=(d_1\theta_2-d_2\theta_1)^\top u_i=0$ for all $i>1 + m_+$.
\end{proof}

From this point on, we assume condition~(b1), because otherwise the
direction $d$ cannot produce a nonconvex shape. Combining
\eqref{eq:app-F-diag} with Proposition~\ref{prop:b1-active}, we obtain
\begin{align}
  \label{eq:app-u-step3}
  F(Uw)^\top d       & = \sum_{i=1}^{m_+} w_i^2 - w_{1 + m_+}^2 + \sum_{i=1}^{1 + m_+}\alpha_i w_i, \\
  \label{eq:app-v-step3}
  F(Uw)^\top d_\perp & = \sum_{i=1}^{1 + m_+}\beta_i w_i.
\end{align}
Then \eqref{eq:app-u-step3}--\eqref{eq:app-v-step3} show that the linear
terms involve only the active variables $w_1,\dots,w_{1 + m_+}$.

Next, apply a translation that removes the linear terms from $y_1$.
Define $\Psi:\bR^n \to \bR^n$ by
\[
  (\Psi(v))_i = v_i - \frac{\alpha_i}{2} \quad \text{for } i \le m_+,
  \qquad
  (\Psi(v))_{1 + m_+} = v_{1 + m_+} + \frac{\alpha_{1 + m_+}}{2},
\]
and $(\Psi(v))_i = v_i$ for $i > 1 + m_+$. Since $\Psi$ is a translation, it is a
bijection on $\bR^n$. Reparametrizing with $w=\Psi(v)$ therefore leaves
the image unchanged:
\[
  F(\bR^n)
  =
  \left\{
  F(U\Psi(v))^\top d\, d
  +
  F(U\Psi(v))^\top d_\perp\, d_\perp
  :\; v \in \bR^n
  \right\}.
\]
A direct substitution gives
\begin{align*}
  F(U\Psi(v))^\top d
   & = \sum_{i=1}^{m_+} v_i^2 - v_{1 + m_+}^2
  + c_d,                                      \\
  F(U\Psi(v))^\top d_\perp
   & = \sum_{i=1}^{1 + m_+} \beta_i v_i
  + c_{d_\perp},
\end{align*}
where
\begin{align}
  \label{eq.coeff-c}
  c_d         & = \frac{1}{4}\Big(\alpha_{1 + m_+}^2 - \sum_{i=1}^{m_+} \alpha_i^2\Big),                    \\
  c_{d_\perp} & = \frac{1}{2}\Big(\beta_{1 + m_+}\alpha_{1 + m_+} - \sum_{i=1}^{m_+} \beta_i \alpha_i\Big).
\end{align}
Denote the affine map $U\circ \Psi$ by $M_1$.

\subsection{Step 3: Hyperbolic and Orthogonal Rotations}

If $m_+=0$, the positive block is empty and there is no
positive coordinate with which to form a hyperbolic-rotation plane. In this case
we skip the rotations below, set $M_3=M_1$, $t_1=0$,
$t_2=\beta_{1 + m_+}$, and
\[
  \Delta=-t_2^2.
\]
The centered canonical map is then
\[
  \relx{y}_1=-\relx{z}_1^2,\qquad
  \relx{y}_2=t_2\relx{z}_1.
\]
For the rest of this step, assume $m_+\ge 1$.

\begin{enumerate}
  \item \textbf{Orthogonal rotation in $\bR^{m_+}$.}
        Next, apply an orthogonal rotation in $\bR^{m_+}$. This rotation
        concentrates the linear dependence of
        \[
          F(M_1(v))^\top d_\perp
        \]
        on $(v_1,\dots,v_{m_+})$ into the single coordinate $z_1$.
        Write $\beta_+ \doteq (\beta_1,\dots,\beta_{m_+})^\top$ and choose
        $R\in O(m_+)$ such that
        \[
          R\beta_+ = (\beta'_1,0,\dots,0)^\top,
          \qquad
          \beta'_1 \doteq \norm{\beta_+}.
        \]
        Define $P:\bR^n\to\bR^n$ by
        \[
          (P(z))_i = (R^\top z_+)_i \quad\text{for } i\le m_+,
          \qquad
          (P(z))_i = z_i \quad\text{for } i>m_+,
        \]
        where $z_+\doteq(z_1,\dots,z_{m_+})^\top$.
        Since $R$ is orthogonal, $P$ is a bijection on $\bR^n$.
        Reparametrizing with $v=P(z)$ therefore leaves the image unchanged:
        \[
          F(\bR^n)
          =
          \left\{
          F(M_1(P(z)))^\top d\,d
          +
          F(M_1(P(z)))^\top d_\perp\,d_\perp
          :\; z\in\bR^n
          \right\}.
        \]
        Since $R$ is orthogonal, $\|v_+\|^2=\|R^\top z_+\|^2=\|z_+\|^2$,
        so the quadratic function is preserved. The linear term becomes
        \[
          \beta_+^\top v_+=(R\beta_+)^\top z_+=\beta'_1 z_1.
        \]
        A direct substitution gives
        \begin{align*}
          F(M_1(P(z)))^\top d
           & = z_1^2+\sum_{i=2}^{m_+} z_i^2 - z_{1 + m_+}^2 + c_d,       \\
          F(M_1(P(z)))^\top d_\perp
           & = \beta'_1 z_1 + \beta_{1 + m_+} z_{1 + m_+} + c_{d_\perp}.
        \end{align*}
        Denote $M_1\circ P$ by $M_2$.
  \item \textbf{Hyperbolic rotation in the $(z_1,z_{1 + m_+})$-plane.}
        Next, apply a hyperbolic rotation in the $(z_1,z_{1 + m_+})$-plane that
        simplifies the linear coefficients in
        $F(M_2(z))^\top d_\perp$.
        Define $\hat{H}:\bR^n\to\bR^n$ by
        \begin{align*}
          (\hat{H}(\relx{z}))_1         & = \cosh\psi\,\relx{z}_1+\sinh\psi\,\relx{z}_{1 + m_+}, \\
          (\hat{H}(\relx{z}))_{1 + m_+} & = \sinh\psi\,\relx{z}_1+\cosh\psi\,\relx{z}_{1 + m_+},
        \end{align*}
        and $(\hat{H}(\relx{z}))_i=\relx{z}_i$ for $i\notin\{1,1 + m_+\}$, where
        $\psi\in\bR$.  Since the matrix
        \[
          H(\psi)\doteq
          \begin{pmatrix}
            \cosh\psi & \sinh\psi \\
            \sinh\psi & \cosh\psi
          \end{pmatrix}
        \]
        is invertible ($\det H(\psi)=\cosh^2\psi-\sinh^2\psi=1$),
        $\hat{H}$ is a bijection on $\bR^n$.
        Reparametrizing with $z=\hat{H}(\relx{z})$ therefore leaves the image
        unchanged:
        \[
          F(\bR^n)
          =
          \left\{
          F(M_2(\hat{H}(\relx{z})))^\top d\,d
          +
          F(M_2(\hat{H}(\relx{z})))^\top d_\perp\,d_\perp
          :\;\relx{z}\in\bR^n
          \right\}.
        \]
        Setting $J\doteq\operatorname{diag}(1,-1)$, the identity
        $H(\psi)^\top J H(\psi)=J$ holds for every $\psi\in\bR$,
        so the Minkowski form is preserved:
        $z_1^2-z_{1 + m_+}^2=\relx{z}_1^2-\relx{z}_{1 + m_+}^2$.
        The linear part transforms as
        $t\doteq H(\psi)^\top\beta$ with
        $\beta\doteq(\beta'_1,\beta_{1 + m_+})^\top$, giving
        \begin{align*}
          t_1 & = \beta'_1\cosh\psi+\beta_{1 + m_+}\sinh\psi, \\
          t_2 & = \beta'_1\sinh\psi+\beta_{1 + m_+}\cosh\psi.
        \end{align*}
        Because $t^\top Jt=\beta^\top J\beta$, the invariant
        \begin{equation}
          \label{eq:app-invariant}
          \Delta\doteq t_1^2-t_2^2=(\beta'_1)^2-\beta_{1 + m_+}^2
          =\sum_{i=1}^{m_+}\beta_i^2-\beta_{1 + m_+}^2
        \end{equation}
        is independent of $\psi$.
        A direct substitution gives, for any $\psi\in\bR$,
        \begin{align*}
          F(M_2(\hat{H}(\relx{z})))^\top d
           & = \relx{z}_1^2+\sum_{i=2}^{m_+} \relx{z}_i^2
          -\relx{z}_{1 + m_+}^2+c_d,                            \\
          F(M_2(\hat{H}(\relx{z})))^\top d_\perp
           & = t_1\relx{z}_1+t_2\relx{z}_{1 + m_+}+c_{d_\perp}.
        \end{align*}

        Finally, we choose the hyperbolic angle $\psi$ according to the sign of the invariant $\Delta$, so as to eliminate one coefficient whenever possible:
        \begin{itemize}
          \item If $\Delta > 0$, we choose $\psi$ such that $t_2 = 0$. This requires $\tanh \psi = -\beta_{1 + m_+}/\beta'_1$, which is valid since $|\beta_{1 + m_+}/\beta'_1| < 1$.
          \item If $\Delta < 0$, we choose $\psi$ such that $t_1 = 0$. This requires $\tanh \psi = -\beta'_1/\beta_{1 + m_+}$, which is valid since $|\beta'_1/\beta_{1 + m_+}| < 1$.
          \item If $\Delta = 0$, then $\beta'_1=\pm\beta_{1+m_+}$. If this vector is nonzero, no finite hyperbolic rotation can eliminate one coefficient; we take $\psi=0$, so $t_1=\beta'_1$ and $t_2=\beta_{1+m_+}$. If both coefficients are zero, then $t_2=0$ and the candidate direction is discarded by Proposition~\ref{prop:t2-nonzero}.
        \end{itemize}
\end{enumerate}

When $m_+\ge1$, denote $M_2\circ \hat{H}$ by $M_3$. Since $M_3$ is a bijection on $\bR^n$, the image is unchanged.
Recall the orthonormal matrix
\begin{equation}
  \label{eq.app-D-canon}
  D\doteq [\,d\ \ d_\perp\,],
\end{equation}
and let
\begin{equation}
  \label{eq.app-c-canon}
  c\doteq(c_d,c_{d_\perp})^\top.
\end{equation}
Define the centered coordinates $\relx{y}\doteq D^\top y-c$. The centered canonical
map is
\begin{equation}
  \label{eq:app-u-canon}
  (\relx{y}_1,\relx{y}_2)=
  \begin{cases}
    \bigl(-\relx{z}_1^2,\ t_2\relx{z}_1\bigr), & m_+=0,   \\[2mm]
    \bigl(\sum_{i=1}^{m_+} \relx{z}_i^2-\relx{z}_{1 + m_+}^2,\
    t_1\relx{z}_1+t_2\relx{z}_{1 + m_+}\bigr), & m_+\ge1.
  \end{cases}
\end{equation}
As $\relx{z}$ ranges over $\bR^n$, this map defines $\relx{S}_{\JR}$, and
the original joint range is recovered as
\[
  F(\bR^n)=D(\relx{S}_{\JR}+c).
\]

\begin{proposition}
  \label{prop:t2-nonzero}
  Assume that conditions \textup{(b1)--(b3)} and the canonical map
  \eqref{eq:app-u-canon} hold. Condition~\textup{(b4)} holds
  if and only if $\Delta \le 0$ \emph{and} $t_2\ne 0$.
\end{proposition}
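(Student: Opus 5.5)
The plan is to rewrite condition (b4) in the coordinates produced by the \emph{linear} parts of the transformations $U$, $P$ and $\hat H$, leaving out the translation $\Psi$. In those coordinates (b4) reduces to an elementary statement: a linear form has no zero on a hyperboloid.

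\textbf{Step 1: reformulate (b4).} By \eqref{eq:app-FH-Q}, $F_H(u)=q_d(u)\,d$ with $\|d\|_2=1$. Hence $u\in F_H^{-1}(-d)$ if and only if $q_d(u)=-1$. Set $\gamma\doteq d_1\theta_2-d_2\theta_1$. Then (b4) says
\[
\gamma^\top u\neq 0 \quad\text{for every } u \text{ with } q_d(u)=-1 .
\]
Write $u=U\,L\,\relx z$, where $L$ is the linear map consisting of the block rotation $R^\top$ followed by the hyperbolic rotation $H(\psi)$; when $m_+=0$, take $L=I$. Since $UL$ is a linear bijection, $u$ ranges over $\bR^n$ exactly as $\relx z$ does.

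By \eqref{eq:app-Q-diag}, the orthogonality of $R$, and the invariance $H(\psi)^\top J H(\psi)=J$, we get
\[
q_d(u)=\sum_{i=1}^{m_+}\relx z_i^2-\relx z_{1+m_+}^2 .
\]
By the definition of $\beta_i$ and Proposition~\ref{prop:b1-active}, which kills the kernel coordinates, together with the computations $\beta_+^\top v_+=\beta'_1 z_1$ and $t=H(\psi)^\top\beta$, we get
\[
\gamma^\top u=t_1\relx z_1+t_2\relx z_{1+m_+}.
\]
For $m_+=0$ this becomes $q_d=-\relx z_1^2$ and $\gamma^\top u=t_2\relx z_1$. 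These are the same linear computations the paper performs on $F(\cdot)^\top d_\perp$, only without the constants $c_d$ and $c_{d_\perp}$.

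\textbf{Step 2: the case $m_+=0$.} The constraint $q_d(u)=-1$ forces $\relx z_1=\pm1$, so (b4) holds if and only if $t_2\neq 0$. In this case $\Delta=-t_2^2\le 0$ holds automatically, so the equivalence is immediate.

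\textbf{Step 3: the case $m_+\ge1$.} Put $\rho^2\doteq\sum_{i=2}^{m_+}\relx z_i^2$. This quantity is forced to be $0$ when $m_+=1$ and can be any value in $[0,\infty)$ otherwise. We ask when some $\relx z$ satisfies both
\[
\relx z_1^2+\rho^2-\relx z_{1+m_+}^2=-1 \quad\text{and}\quad t_1\relx z_1+t_2\relx z_{1+m_+}=0 .
\]

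If $t_2=0$, the point $\relx z_1=0$, $\relx z_{1+m_+}=1$ (all other coordinates zero) satisfies both equations. So (b4) fails, consistent with the right-hand side of the claimed equivalence being false.

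If $t_2\neq0$, substitute $\relx z_{1+m_+}=-t_1\relx z_1/t_2$ into the first equation. This gives
\[
\relx z_1^2\,\frac{t_2^2-t_1^2}{t_2^2}+\rho^2=-1,
\qquad\text{i.e.}\qquad
-\frac{\Delta}{t_2^2}\,\relx z_1^2+\rho^2=-1 .
\]
If $\Delta\le0$, the left side is nonnegative, so there is no solution and (b4) holds. If $\Delta>0$, take $\rho=0$ and $\relx z_1=t_2/\sqrt{\Delta}$ to obtain a solution, so (b4) fails. Combining these subcases, (b4) holds if and only if $\Delta\le0$ and $t_2\neq0$.

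The argument works for any $\psi$, including the choice $\psi=0$ made when $\Delta=0$, since only the invariant $\Delta$ and the value of $t_2$ enter.

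\textbf{Main obstacle.} The delicate point is Step~1. One must be careful that the preimage $F_H^{-1}(-d)$ is taken under the homogeneous map, so the translation $\Psi$ and the offsets $c$ must not be applied to $u$. One must also confirm that $\gamma^\top u$ transforms with exactly the same coefficients $t_1,t_2$ as the $d_\perp$-component of $F$. I would verify this by tracking $\gamma^\top U w=\sum_i\beta_i w_i$ through $R$ and $H(\psi)$ explicitly.
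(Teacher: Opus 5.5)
Your proof is correct and follows essentially the same route as the paper. You drop the translation $\Psi$, rewrite (b4) in the rotated coordinates as ``the linear form $t_1\relx{z}_1+t_2\relx{z}_{1+m_+}$ has no zero on the hyperboloid $\relx{z}_{1+m_+}^2=1+\sum_{i=1}^{m_+}\relx{z}_i^2$,'' and then use the same witnesses as the paper ($\relx{z}_{1+m_+}=1$ when $t_2=0$; $\relx{z}_1=t_2/\sqrt{\Delta}$, $\relx{z}_{1+m_+}=-t_1/\sqrt{\Delta}$ when $\Delta>0$) and the same elimination argument for the converse, while your Step~1 usefully makes explicit the transformation of $\gamma^\top u$ into $t_1\relx{z}_1+t_2\relx{z}_{1+m_+}$, which the paper only asserts.
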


\begin{proof}
  Note that $\Delta = t_1^2 - t_2^2$. If $m_+=0$, then $t_1=0$,
  $\Delta=-t_2^2$, and condition~\textup{(b4)} reduces to
  \[
    \relx{z}_1^2=1 \;\Longrightarrow\; t_2\relx{z}_1\neq 0,
  \]
  which is equivalent to $t_2\neq0$. Hence the result holds for $m_+=0$.

  Assume now that $m_+\ge1$. Condition~\textup{(b4)} concerns the homogeneous
  preimage $F_H^{-1}(-d)$, so the affine translation $\Psi$ used to recenter the
  inhomogeneous image is not applied. Let $r$ denote the coordinates obtained
  from the original diagonal coordinates by the same orthogonal and hyperbolic
  rotations, but without the translation; equivalently, write
  $u=U P(\hat{H}(r))$. Since these rotations preserve
  $\sum_{i=1}^{m_+}r_i^2-r_{1+m_+}^2$, condition~\textup{(b4)} is equivalent to
  \begin{equation}
    \label{eq:b4-canon}
    r_{1 + m_+}^2 = 1+\sum_{i=1}^{m_+} r_i^2
    \;\Longrightarrow\;
    t_1r_1+t_2r_{1 + m_+}\neq 0.
  \end{equation}

  \noindent($\Rightarrow$)
  Assume that (b4) holds.
  Taking $r_{1 + m_+}=1$ and $r_i=0$ for $i\neq 1 + m_+$ satisfies
  the left-hand side of \eqref{eq:b4-canon}, giving $t_2\neq 0$.

  It remains to show $\Delta\le 0$.
  Assume that for contradiction $\Delta>0$ (i.e., $t_1^2>t_2^2$).
  Set
  \[
    r_1 = \frac{t_2}{\sqrt{\Delta}},\quad
    r_{1 + m_+} = -\frac{t_1}{\sqrt{\Delta}},\quad
    r_i = 0 \text{ for } 2\le i\le m_+.
  \]
  Then
  \[
    r_{1 + m_+}^2 = \frac{t_1^2}{\Delta}
    = \frac{\Delta+t_2^2}{\Delta}
    = 1+\frac{t_2^2}{\Delta}
    = 1+r_1^2,
  \]
  so the left-hand side of \eqref{eq:b4-canon} holds.  But
  $t_1r_1+t_2r_{1 + m_+}
    = \tfrac{t_1 t_2}{\sqrt{\Delta}}-\tfrac{t_1 t_2}{\sqrt{\Delta}}=0$,
  contradicting \eqref{eq:b4-canon}. Hence $\Delta\le 0$.

  \noindent($\Leftarrow$)
 Assume $\Delta \le 0$ (i.e., $t_1^2 \le t_2^2$) and $t_2\ne 0$.
Let $r$ satisfy $r_{1 + m_+}^2=1+\sum_{i=1}^{m_+}r_i^2$
and suppose for contradiction that $t_1r_1+t_2r_{1 + m_+}=0$.
Since $t_2\ne 0$, we may square and rearrange:
$t_2^2r_{1 + m_+}^2=t_1^2r_1^2\le t_2^2r_1^2$,
so $r_{1 + m_+}^2\le r_1^2$.
But $\sum_{i=1}^{m_+}r_i^2\ge r_1^2$, so
$r_{1 + m_+}^2=1+\sum_{i=1}^{m_+}r_i^2\ge 1+r_1^2>r_1^2$,
a contradiction. Hence \eqref{eq:b4-canon} holds, and therefore condition~\textup{(b4)} is satisfied by the equivalence above.
\end{proof}

\subsection{Classification of Shapes of Joint Ranges}
\label{sec:app-classification}

When conditions \textup{(b1)--(b4)} are satisfied for the candidate direction $d$, the above transformations put $F$ into the canonical form \eqref{eq:app-u-canon}.
Let $\relx{S}_{\JR}$ denote the image of the centered canonical map \eqref{eq:app-u-canon}:
\begin{equation*}
  \relx{S}_{\JR} = \{ (\relx{y}_1, \relx{y}_2) \in \bR^2 : \exists\,\relx{z}\in\bR^n\text{ such that }(\relx{y}_1,\relx{y}_2)\text{ is given by \eqref{eq:app-u-canon}} \}.
\end{equation*}
Then $F(\bR^n) = D(\relx{S}_{\JR}+c)$, where $D$ is the linear map in
\eqref{eq.app-D-canon} and $c$ is the translation vector in
\eqref{eq.app-c-canon}. The offset $c$ is not part of $\relx{S}_{\JR}$; it is
added separately to recover $F(\bR^n)$.

In the centered coordinates, and up to the rotations and translations described
above, the joint range has the following nonconvex shapes.

\subsubsection{Category 1: The Parabolic Nonconvex Case \texorpdfstring{($\Delta < 0$)}{(t1 squared less than t2 squared)}}

When $\Delta < 0$ (i.e., $t_1^2 < t_2^2$), the case $m_+=0$ already has
$t_1=0$; when $m_+\ge1$, a hyperbolic change of variables may be chosen so
that $t_1=0$ and $t_2\neq0$. Since $t_1=0$, the invariant gives
$\Delta = -t_2^2$, hence $t_2^2 = -\Delta > 0$. The centered canonical map
\eqref{eq:app-u-canon} reduces to
\[
  \relx{y}_1 = \sum_{i=1}^{m_+} \relx{z}_i^2 - \relx{z}_{1 + m_+}^2,
  \qquad
  \relx{y}_2 = t_2 \relx{z}_{1 + m_+}.
\]
Substituting $\relx{z}_{1 + m_+} = \relx{y}_2/t_2$ and using $t_2^2 = -\Delta$ gives
\[
  \relx{y}_1 = -\left(\frac{\relx{y}_2}{t_2}\right)^2 + \sum_{i=1}^{m_+} \relx{z}_i^2
  = \frac{\relx{y}_2^2}{\Delta} + \sum_{i=1}^{m_+} \relx{z}_i^2.
\]
\begin{itemize}
  \item \textbf{Sub-case 1a ($m_+ = 0$).} The sum is empty and the range is the parabola
        \[
          \relx{S}_{\JR} = \{ (\relx{y}_1, \relx{y}_2) \in \bR^2 : \relx{y}_1 = \relx{y}_2^2/\Delta\}.
        \]
        Since $\Delta<0$, this is a left-opening parabola ($\relx{y}_1 \le 0$).

  \item \textbf{Sub-case 1b ($m_+ \ge 1$).} Here the sum $\sum \relx{z}_i^2$ can take any value in $[0,\infty)$, so the range is the solid parabolic region
        \[
          \relx{S}_{\JR} = \{ (\relx{y}_1, \relx{y}_2) \in \bR^2 : \relx{y}_1 \ge \relx{y}_2^2/\Delta \}.
        \]
\end{itemize}

\subsubsection{Category 2: The Punctured Nonconvex Case \texorpdfstring{($\Delta = 0$)}{(t1 squared equals t2 squared)}}

When $\Delta = 0$ (i.e., $t_1^2 = t_2^2$) and condition~\textup{(b4)} holds,
Proposition~\ref{prop:t2-nonzero} gives $t_2\ne0$, so there is a sign
$\sigma\in\{-1,1\}$ such that
\[
  t_1=\sigma t_2.
\]
This implies $m_+ \ge 1$, since $m_+=0$ would give $\Delta=-t_2^2<0$.

The canonical map \eqref{eq:app-u-canon} then gives
\[
  \relx{y}_1=\sum_{i=1}^{m_+} \relx{z}_i^2-\relx{z}_{1 + m_+}^2
  =(\relx{z}_1-\sigma\relx{z}_{1 + m_+})
  (\relx{z}_1+\sigma\relx{z}_{1 + m_+})
  +\sum_{i=2}^{m_+} \relx{z}_i^2,
\]
\[
  \relx{y}_2=t_2(\sigma\relx{z}_1+\relx{z}_{1 + m_+})
  =\sigma t_2(\relx{z}_1+\sigma\relx{z}_{1 + m_+}).
\]
Set
\[
  s\doteq \relx{z}_1+\sigma\relx{z}_{1 + m_+}
  =\frac{\relx{y}_2}{\sigma t_2},
  \qquad
  r\doteq \relx{z}_1-\sigma\relx{z}_{1 + m_+}.
\]
Hence
\[
  \relx{y}_1=r\frac{\relx{y}_2}{\sigma t_2}+\sum_{i=2}^{m_+} \relx{z}_i^2.
\]
Now fix $\relx{y}_2$:
\begin{itemize}
  \item if $\relx{y}_2\neq 0$, then the coefficient $\relx{y}_2/(\sigma t_2)$ is nonzero and $r$ is free, so $\relx{y}_1$ can be any real number;
  \item if $\relx{y}_2=0$, then $s=0$, so the mixed term vanishes and
        \[
          \relx{y}_1=\sum_{i=2}^{m_+} \relx{z}_i^2\ge 0.
        \]
\end{itemize}
Therefore the only missing points lie on the axis $\relx{y}_2=0$, leading to two punctured shapes:
\begin{itemize}
  \item \textbf{Sub-case 2a ($m_+ = 1$).} The sum is empty, so if $\relx{y}_2=0$ then $\relx{y}_1=0$ and
        \[
          \relx{S}_{\JR} = \bR^2 \setminus \{ (\relx{y}_1, 0) \in \bR^2 : \relx{y}_1 \neq 0 \}.
        \]

  \item \textbf{Sub-case 2b ($m_+ \ge 2$).} The sum can take any value in $[0,\infty)$, so if $\relx{y}_2=0$ then $\relx{y}_1 \ge 0$ and
        \[
          \relx{S}_{\JR} = \bR^2 \setminus \{ (\relx{y}_1, 0) \in \bR^2 : \relx{y}_1 < 0 \}.
        \]
\end{itemize}

\subsection{Summary of the Classification}

\begin{theorem}
  \label{thm:classification}
  Let
  \[
    F(x) = (x^\top \Theta_1 x + \theta_1^\top x,\ x^\top \Theta_2 x + \theta_2^\top x).
  \]
  Let $\cN_0 \doteq \ker \Theta_1 \cap \ker \Theta_2$. Then:
  \begin{itemize}
    \item If $\Theta_1=\Theta_2=0$, then $F(\bR^n)$ is convex.
    \item If $\Theta_1$ and $\Theta_2$ are linearly independent, then $F(\bR^n)$ is convex.
    \item If $\{\theta_1,\theta_2\} \not\subseteq \cN_0^\perp$, then $F(\bR^n)$ is convex.
    \item Otherwise, there exists a nonzero direction $d=(d_1,d_2)$ (unique up to scaling) such that $d_2 \Theta_1 = d_1 \Theta_2$. Normalize $d$ so that $\|d\|_2=1$; the two candidate directions are $d$ and $-d$. For each normalized candidate direction $\hat d$, set $\hat d_\perp=(-\hat d_2,\hat d_1)$ and $D_{\hat d}\doteq[\,\hat d\ \hat d_\perp\,]$. Let $\bar Q_{\hat d}\doteq \hat d_1\Theta_1+\hat d_2\Theta_2$, and let $(m_+,m_-,m_0)$ be the inertia of $\bar Q_{\hat d}$ on $\cN_0^\perp$ (so $m_0=0$ in exact arithmetic).
          \begin{itemize}
            \item If $m_-\neq 1$, then this direction cannot produce a nonconvex shape.
            \item Otherwise, the canonical reduction gives $m_+$, $\Delta$, $t_1$, $t_2$,
                  the offset $c$, and the centered shape map \eqref{eq:app-u-canon}.
                  \begin{itemize}
                    \item If $\Delta>0$ or $t_2=0$, then condition~\textup{(b4)}
                          fails for this candidate direction, so this direction is discarded.
                    \item If $\Delta\le0$ and $t_2\ne0$, then condition~\textup{(b4)}
                          holds and $F(\bR^n)$ is nonconvex. In centered canonical
                          coordinates, $\relx{S}_{\JR}$ has one of the following
                          forms; the original range is recovered as
                          $F(\bR^n)=D_{\hat d}(\relx{S}_{\JR}+c)$.
                          \begin{itemize}
                            \item If $\Delta < 0$ and $m_+ = 0$, then
                                  \[
                                    \relx{S}_{\JR}
                                    = \{(\relx{y}_1,\relx{y}_2)\in\bR^2 :
                                    \relx{y}_1 = \relx{y}_2^2/\Delta\},
                                  \]
                                  a one-dimensional parabola.

                            \item If $\Delta < 0$ and $m_+ \ge 1$, then
                                  \[
                                    \relx{S}_{\JR}
                                    = \{(\relx{y}_1,\relx{y}_2)\in\bR^2 :
                                    \relx{y}_1 \ge \relx{y}_2^2/\Delta\},
                                  \]
                                  a solid parabolic region.

                            \item If $\Delta = 0$ and $m_+ = 1$, then
                                  \[
                                    \relx{S}_{\JR}
                                    = \bR^2 \setminus
                                    \{(\relx{y}_1,0)\in\bR^2 : \relx{y}_1 \neq 0\},
                                  \]
                                  the plane with the horizontal axis removed except for the origin.

                            \item If $\Delta = 0$ and $m_+ \ge 2$, then
                                  \[
                                    \relx{S}_{\JR}
                                    = \bR^2 \setminus
                                    \{(\relx{y}_1,0)\in\bR^2 : \relx{y}_1 < 0\},
                                  \]
                                  a plane missing a ray.
                          \end{itemize}
                  \end{itemize}
          \end{itemize}
          If neither candidate direction yields a nonconvex shape, then $F(\bR^n)$ is convex.
  \end{itemize}
\end{theorem}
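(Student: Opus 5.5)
The plan is to assemble \Cref{thm:classification} from the results already established in this appendix. The first three bullets are immediate. If $\Theta_1=\Theta_2=0$, then $F$ is affine and its image is an affine subspace, hence convex. Linear independence of $\Theta_1,\Theta_2$ is \Cref{cor:LD-necessary}. If $\{\theta_1,\theta_2\}\not\subseteq\cN_0^\perp$, condition (C1) of \Cref{thm:fbop-global-convexity} holds for every $d\neq0$, so $F(\bR^n)$ is convex.

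In the remaining case, $\Theta_1,\Theta_2$ are linearly dependent and not both zero, so the solutions of $d_2\Theta_1=d_1\Theta_2$ form a one-dimensional subspace of $\bR^2$. After normalization, (b2) can hold only for $\hat d\in\{d,-d\}$. By \Cref{char.nonconvexity}, $F(\bR^n)$ is nonconvex if and only if (b1)--(b4) hold for some $\hat d$. Since (b1) holds here and (b2) holds exactly for these two candidates, nonconvexity is equivalent to (b3) and (b4) holding for one of them. This yields the final sentence of the theorem: if neither candidate passes, then $F(\bR^n)$ is convex.

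For a fixed candidate, I will apply \Cref{prop:Q-cases}. Part (i) gives (b3)$\iff m_-\ge1$, and part (ii) gives (b4)$\Rightarrow m_-\le1$. Hence $m_-\neq1$ forces (b3) or (b4) to fail, so that candidate cannot certify nonconvexity. When $m_-=1$, (b3) holds and the canonical reduction of Steps 1--3 is well defined, since each step is a bijective reparametrization of $\bR^n$. It produces $t_1,t_2,\Delta,c$ and the identity $F(\bR^n)=D_{\hat d}(\relx{S}_{\JR}+c)$, where $\relx{S}_{\JR}$ is the image of \eqref{eq:app-u-canon}. \Cref{prop:t2-nonzero} then states that (b4) holds exactly when $\Delta\le0$ and $t_2\neq0$. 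This justifies both the discard rule and the nonconvexity conclusion.

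It remains to identify the four shapes, which is the computation already carried out in \Cref{sec:app-classification}. For $\Delta<0$ (with $t_1=0$, $t_2^2=-\Delta$), substituting $\relx{z}_{1+m_+}=\relx{y}_2/t_2$ gives the parabola when $m_+=0$ and the solid region when $m_+\ge1$, because $\sum\relx{z}_i^2$ ranges over $[0,\infty)$ independently of $\relx{y}_2$. For $\Delta=0$, the factorization using $s,r$ shows that every $\relx{y}_1$ is attained when $\relx{y}_2\neq0$. When $\relx{y}_2=0$, the attainable values of $\relx{y}_1$ are $\{0\}$ if $m_+=1$ and $[0,\infty)$ if $m_+\ge2$. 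The case $m_+=0$ is excluded here because it forces $\Delta=-t_2^2<0$.

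The main obstacle is the hyperbolic-rotation step in the subcase $\Delta=0$. There the claimed equivalence requires the rotations to preserve the homogeneous preimage set $F_H^{-1}(-d)$ together with the linear functional appearing in (b4), while the translation $\Psi$ is deliberately not applied. I would check this carefully by tracking $u=UP(\hat H(r))$, as in the proof of \Cref{prop:t2-nonzero}. The other delicate point is verifying that $\hat d$ and $-\hat d$ exhaust the candidates, and I would justify this by the rank-one solution space of (b2).
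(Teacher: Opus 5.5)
Your proposal is correct and follows essentially the same route as the paper. The first three bullets come from the affine case, \Cref{cor:LD-necessary} and (C1). Then, for the two normalized candidates, \Cref{prop:Q-cases} forces $m_-=1$, the bijective canonical reduction (with (b1) removing the kernel variables via \Cref{prop:b1-active}) and \Cref{prop:t2-nonzero} give the (b4) criterion, the Category~1/2 computations give the shapes, and \Cref{char.nonconvexity} gives the final convexity claim. The two points you flag are already settled in the appendix: the untranslated rotations in the proof of \Cref{prop:t2-nonzero} handle (b4), and restricting to $\pm d$ only needs the easy invariance of (b3)--(b4) under positive rescaling of $d$.
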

\begin{proof}
  The first three cases follow from the affine case, \Cref{cor:LD-necessary}, and condition~\textup{(C1)} in the global convexity characterization \Cref{thm:fbop-global-convexity}. In the remaining linearly dependent case, \Cref{prop:Q-cases} reduces any nonconvex candidate direction to the case $m_-=1$, \Cref{prop:b1-active} removes the common-kernel variables from the linear part, and \Cref{prop:t2-nonzero} shows that the fixed-direction nonconvexity condition is equivalent to $\Delta\le0$ and $t_2\ne0$. The explicit shape descriptions are exactly the centered canonical images derived in Categories~1 and~2 above. If neither normalized candidate direction satisfies these conditions, then no direction satisfies the nonconvexity criterion in \Cref{char.nonconvexity}, and therefore $F(\bR^n)$ is convex.
\end{proof}

The final sentence of \Cref{thm:classification} should be read candidate-by-candidate.  If, after the canonical reduction for a fixed normalized direction $d$, one obtains $\Delta>0$ or $t_2=0$, then Proposition~\ref{prop:t2-nonzero} shows that condition~\textup{(b4)} fails for this candidate direction.  Hence this candidate direction cannot certify nonconvexity, but the opposite candidate direction $-d$ must still be checked before concluding convexity.

For example, consider the centered canonical map
\[
  \relx{y}_1=z_1^2-z_2^2,\qquad \relx{y}_2=z_1.
\]
Here $t_1=1$, $t_2=0$, and $\Delta=1>0$.  On the hyperbola defining $F_H^{-1}(-d)$, namely $z_2^2=1+z_1^2$, one may take $z_1=0$ and $z_2=\pm1$, which makes the $d_\perp$-component of the linear part equal to zero.  Thus the linear part is parallel to $d$, condition~\textup{(b4)} fails, and this candidate direction should be discarded.

As a degenerate case, if
\[
  \relx{y}_1=z_1^2-z_2^2,\qquad \relx{y}_2=0,
\]
then the image is simply
\[
  \bR\times\{0\},
\]
which is convex.  Here $t_1=t_2=0$, so $t_2=0$ and $\Delta=0$, and no nonconvex shape appears from this candidate direction.

\begin{example}
  \label{ex:parabolic}
  To illustrate the classification of the joint range on a higher-dimensional instance, consider a pair of quadratic functions defined on $\bR^4$:
  \begin{align*}
    f_1(x) & = \frac{1}{\sqrt{2}} \left(x_1^2 + x_2^2 + x_3^2 - x_4^2 - x_4\right), \\
    f_2(x) & = \frac{1}{\sqrt{2}} \left(x_1^2 + x_2^2 + x_3^2 - x_4^2 + x_4\right).
  \end{align*}
  Here, the quadratic matrices and linear vectors are:
  \[
    \Theta_1 = \Theta_2 = \frac{1}{\sqrt{2}} \begin{pmatrix} 1 & 0 & 0 & 0 \\ 0 & 1 & 0 & 0 \\ 0 & 0 & 1 & 0 \\ 0 & 0 & 0 & -1 \end{pmatrix}, \quad
    \theta_1 = \frac{1}{\sqrt{2}} \begin{pmatrix} 0 \\ 0 \\ 0 \\ -1 \end{pmatrix}, \quad
    \theta_2 = \frac{1}{\sqrt{2}} \begin{pmatrix} 0 \\ 0 \\ 0 \\ 1 \end{pmatrix}.
  \]
  Algorithm~\ref{alg:joint-range} proceeds as follows:
  \begin{enumerate}
    \item $\Theta_1$ and $\Theta_2$ are linearly dependent, since $1 \cdot \Theta_1 = 1 \cdot \Theta_2$. The normalized direction satisfying $d_2 \Theta_1 = d_1 \Theta_2$ is $d = (\frac{1}{\sqrt{2}}, \frac{1}{\sqrt{2}})$.
    \item The common kernel is $\cN_0 = \ker(\Theta_1) \cap \ker(\Theta_2) = \{0\}$, so $\cN_0^\perp = \bR^4$, satisfying the inclusion condition.
    \item For the candidate direction $\hat{d} = (\frac{1}{\sqrt{2}}, \frac{1}{\sqrt{2}})$, we form $\bar Q_{\hat d} = \hat{d}_1 \Theta_1 + \hat{d}_2 \Theta_2 = \operatorname{diag}(1, 1, 1, -1)$. Its eigenvalues are $\{1, 1, 1, -1\}$, giving an inertia of $m_+ = 3$ and $m_- = 1$.
    \item Since $m_- = 1$, the canonical reduction proceeds. The orthogonal direction is $\hat{d}_\perp = (-\frac{1}{\sqrt{2}}, \frac{1}{\sqrt{2}})$. We project the linear parts onto the $\hat d_\perp$-component to find the canonical coefficients:
          \begin{align*}
            \hat d_\perp^\top(\theta_1^\top x,\theta_2^\top x)^\top
             & = -\frac{1}{\sqrt{2}}\left(-\frac{1}{\sqrt{2}}x_4\right)
            +\frac{1}{\sqrt{2}}\left(\frac{1}{\sqrt{2}}x_4\right)
            = x_4.
          \end{align*}
          Hence the negative-coordinate coefficient is $t_2=1$ and there is no positive-block linear coefficient, so $t_1=0$.
          This yields the geometric invariant $\Delta = t_1^2 - t_2^2 = 0 - 1 = -1$.
    \item Because $\Delta < 0$ and $m_+ = 3 \ge 1$, the algorithm classifies $F(\bR^4)$ as \textsc{Nonconvex}, mapping to the canonical solid parabolic region $\relx{S}_{\JR} = \{ (\relx{y}_1, \relx{y}_2) : \relx{y}_1 \ge -\relx{y}_2^2 \}$.
  \end{enumerate}
  The final joint range is the rotated solid parabolic region $F(\bR^4) = D(\relx{S}_{\JR})$, where $D = \left[ \hat{d}, \hat{d}_\perp \right] = \frac{1}{\sqrt{2}} \left(\begin{smallmatrix} 1 & -1 \\ 1 & 1 \end{smallmatrix}\right)$.
\end{example}

\section{Experimental Instance Generation}
\label{sec:app-experiment-generation}

This appendix describes how the random instances in \Cref{sec:experimental} are generated. The purpose of the generator is not to model a particular application. Instead, it creates controlled examples whose two-dimensional projected hull falls into one of the seven geometric cases in Table~\ref{tab:mccormick-ratio}. For every dimension $n\in\{3,4,5,6,7,8\}$ and every case, we keep ten accepted random instances.

We use two kinds of uniform sampling. The notation $\mathrm{Uniform}[a,b]$ means the continuous uniform distribution on the interval $[a,b]$, whereas $\mathrm{Uniform}\{a,\ldots,b\}$ means the discrete uniform distribution on the listed integers. We also use random orthogonal matrices. When we write $R\sim\mathrm{Haar}(O(n))$, this means that $R$ is sampled uniformly from the orthogonal group, so no coordinate direction is preferred.

\medskip
\noindent\textbf{Nonconvex parabolic cases.}
Cases~1--6 are generated from the canonical nonconvex parabolic form in Appendix~\ref{sec:app-canonical-transformation}. Each trial begins by drawing
\[
  \Delta\sim \mathrm{Uniform}[-2,-0.5],
  \qquad
  t_2=\sqrt{-\Delta}.
\]
The value of $m_+$ determines whether the canonical joint range is only the parabolic boundary or the solid parabolic region. We set $m_+=0$ in Cases~1, 3, and 5. In Cases~2, 4, and 6, we draw $m_+$ uniformly from $\{1,\ldots,n-1\}$. Thus the odd-numbered nonconvex cases use the boundary range $\bd(C_{\Par})$, and the even-numbered nonconvex cases use the solid range $\clos(C_{\Par}^c)$.

The canonical quadratic part is
\[
  \Lambda_{m_+}=\operatorname{diag}(\underbrace{1,\ldots,1}_{m_+},-1,0,\ldots,0),
\]
and the canonical seed map is
\[
  \relx{F}_{\Delta,m_+}(\relx{z})
  =
  \left(
  \relx{z}^{\top}\Lambda_{m_+}\relx{z},
  t_2\relx{z}_{1+m_+}
  \right).
\]
The sampled quadratic functions are obtained by reversing the canonical transformation in Appendix~\ref{sec:app-canonical-transformation}. The reverse construction is as follows.

\begin{enumerate}[label=(R\arabic*)]
  \item \textbf{Start from the canonical form after Step~3.}
        The map $\relx{F}_{\Delta,m_+}$ is already a centered canonical seed of the form \eqref{eq:app-u-canon}; no input translation is added at this stage.

  \item \textbf{Choose a pre-reduction active linear vector.}
        Step~3 of Appendix~\ref{sec:app-canonical-transformation} uses an
        orthogonal rotation in the positive block and a hyperbolic rotation in one
        positive/negative active plane to simplify the linear part. For
        solid-range table cases, draw $\eta\sim\mathrm{Uniform}[0,1]$ and set the
        pre-reduction linear vector in that active plane to
        \[
          \bar\theta
          =
          -t_2\sinh(\eta)e_{m_+}
          +t_2\cosh(\eta)e_{1+m_+}.
        \]
        Equivalently, if
        \[
          H(\eta)=
          \begin{pmatrix}
            \cosh\eta & \sinh\eta \\
            \sinh\eta & \cosh\eta
          \end{pmatrix},
        \]
        then the two active coefficients of $\bar\theta$ are
        $H(-\eta)(0,t_2)^\top$. Applying the forward hyperbolic rotation with
        angle $\eta$ recovers the canonical vector $t_2e_{1+m_+}$, and the
        quadratic matrix $\Lambda_{m_+}$ is unchanged. For boundary-range table
        cases, $m_+=0$, so there is no positive coordinate to pair with the
        negative coordinate and $\bar\theta=t_2e_1$.

  \item \textbf{Randomize the inverse of Steps~1 and 2.}
        Steps~1 and 2 diagonalize the quadratic part, introduce active coordinates, and record the translation $c$ in \eqref{eq.app-c-canon}. In the experiments the seed is centered before the sampled output mixing. After the final mixing by $G$, Algorithm~\ref{alg:joint-range} may recover a nonzero canonical offset $c$; the acceptance test and hull construction use this recovered offset. We reverse the coordinate part by drawing $R\sim\mathrm{Haar}(O(n))$ and using the change of variables $\relx{z}=R^\top x$. This gives the two building blocks
        \[
          x^\top R\Lambda_{m_+}R^\top x,
          \qquad
          (R\bar\theta)^\top x .
        \]
        This random rotation makes the sampled instance look generic in the original $x$ variables.

  \item \textbf{Choose the sampled output mixing matrix.}
        The forward canonical transformation maps centered canonical coordinates back to the original image space through the orthonormal matrix $D$ in \eqref{eq.app-D-canon}. In the generator, however, the case-dependent output mixing is a sampled matrix $G$, not this canonical $D$. Algorithm~\ref{alg:joint-range} later recovers its own canonical matrix $D$ from the generated instance. The final coefficients of $F=(f_1,f_2)$ are
        \[
          \Theta_i=G_{i1}R\Lambda_{m_+}R^\top,
          \qquad
          \theta_i=G_{i2}R\bar\theta,
          \qquad i=1,2.
        \]
\end{enumerate}

The sampled matrix $G\in\bR^{2\times2}$ is chosen differently for the three geometric configurations. For Cases~1 and 2, the two output rows are placed near the recession direction of the parabolic bowl. More precisely, with $u(\alpha)=(\cos\alpha,\sin\alpha)$, we draw
\[
  \alpha\sim\mathrm{Uniform}[\pi-0.3,\pi+0.3],
  \qquad
  \beta\sim\mathrm{Uniform}[0.5,1.0],
\]
and use rows $u(\alpha-\beta)^\top$ and $u(\alpha+\beta)^\top$. This makes containment configurations occur more often. For Cases~3 and 4, we use a random planar orthogonal matrix $Q\sim\mathrm{Haar}(O(2))$, which gives an unbiased orientation and is suited for the chord-truncated configurations. For Cases~5 and 6, one row is fixed in the recession direction $(-1,0)$, and the other is an axis direction with random sign; the two rows are also randomly permuted. This biases the sampler toward the ray-truncated configurations.

Finally, the two base inequalities are shifted by independent offsets
\[
  \phi_1,\phi_2\sim \mathrm{Uniform}[-2,2],
  \qquad
  K_{\JR}=\{y\in\bR^2:\phi_1+y_1\ge0,\ \phi_2+y_2\ge0\}.
\]
The shifts move the cone $K_{\JR}$ relative to the parabolic joint range.
After each reverse-canonical draw, Algorithm~\ref{alg:joint-range} is applied
to the resulting pair of quadratic functions. The draw is retained only if the
recovered data are nonconvex, have $\Delta<0$, and the transformed cone
$\relx{K}_{\JR}$ satisfies the geometric predicate for the target table case:
containment of the parabolic bowl, two finite boundary-ray intersections with
the cone apex inside the bowl, or one recession ray with the cone apex inside
the bowl.

\medskip
\noindent\textbf{Convex Spectrahedral Shadow.}
Case~7 is generated separately because its joint range is convex and is evaluated through the Shor SDP relaxation. We choose a random orthogonal basis $R_{\rm psd}$ by orthogonalizing a Gaussian matrix. In that basis, the first quadratic function has eigenvalues evenly spaced from $1$ to $2$, and the second has eigenvalues evenly spaced from $2$ down to $1$. Equivalently, for $j=1,\ldots,n$ let
\[
  \lambda^{(1)}_j = 1 + \frac{j-1}{n-1},
  \qquad
  \lambda^{(2)}_j = 2 - \frac{j-1}{n-1},
\]
and set
\[
  \Theta_1 = R_{\rm psd}\operatorname{diag}(\lambda^{(1)}_1,\ldots,\lambda^{(1)}_n)R_{\rm psd}^{\top},
  \qquad
  \Theta_2 = R_{\rm psd}\operatorname{diag}(\lambda^{(2)}_1,\ldots,\lambda^{(2)}_n)R_{\rm psd}^{\top}.
\]
The linear terms are small Gaussian perturbations,
\[
  \theta_i\sim 0.1\,\mathcal{N}(0,I_n),\qquad i=1,2,
\]
and the offsets are fixed at $\phi_1=\phi_2=0$.

\medskip
\noindent\textbf{Numerical approximation.}
For every accepted instance, the projection of the first-level box RLT polytope is approximated by support-function sampling: we use 360 equally spaced directions in the $(y_1,y_2)$ plane and solve one LP over $\mathcal R_{\rm RLT}$ in each direction. The resulting projected polygon is then clipped by the two base inequalities defining $K_{\JR}$. In Case~7, the spectrahedral shadow is approximated by 180 support SDP solves in which the same cone constraints are imposed directly, using SCS with tolerance $10^{-5}$ and at most 20000 iterations. The resulting support points are converted into polygons, and Shapely is used to compute intersections and areas. The table reports the shifted geometric mean of the resulting ratios over the ten accepted instances for each dimension and case.

\end{document}